\newtheorem{thm}{Theorem}
\journal{Elsevier}
\begin{document}

\begin{frontmatter}

%% Title, authors and addresses

  \title{Convolution tensor decomposition for efficient high-resolution solutions to the Allen-Cahn equation}

%% use the tnoteref command within \title for footnotes;
%% use the tnotetext command for the associated footnote;
%% use the fnref command within \author or \address for footnotes;
%% use the fntext command for the associated footnote;
%% use the corref command within \author for corresponding author footnotes;
%% use the cortext command for the associated footnote;
%% use the ead command for the email address,
%% and the form \ead[url] for the home page:
%%
%% \title{Title\tnoteref{label1}}
%% \tnotetext[label1]{}
%% \author{Name\corref{cor1}\fnref{label2}}
%% \ead{email address}
%% \ead[url]{home page}
%% \fntext[label2]{}
%% \cortext[cor1]{}
%% \address{Address\fnref{label3}}
%% \fntext[label3]{}

%% use optional labels to link authors explicitly to addresses:
  \author[label1]{Ye Lu\corref{cor1}}
  \cortext[cor1]{Corresponding author}
  \ead{yelu@umbc.edu}
  \author[label1]{Chaoqian Yuan}
  \author[label2]{Han Guo}
  %\author[label2]{Chinnapat Panwisawas}

  \address[label1]{Department of Mechanical Engineering, University of Maryland Baltimore County, Baltimore, USA}
  %\address[label2]{School of Engineering and Materials Science, Queen Mary University of London, London, UK}
\address[label2]{ACI Renault Le Mans, France}

%\address{California, United States}

  \begin{abstract}
This paper presents a convolution tensor decomposition based model reduction method for solving the Allen-Cahn equation. The Allen-Cahn equation is usually used to characterize  phase separation  or the motion of anti-phase boundaries in materials.  {Its} solution is time-consuming when high-resolution meshes and large time scale integration are involved. To resolve these issues, the convolution tensor decomposition method is developed, in conjunction with a stabilized semi-implicit scheme for time integration. The development enables a powerful computational framework for high-resolution solutions of Allen-Cahn problems, and allows the use of relatively large time increments for time integration without violating the discrete energy law. To further improve the efficiency and robustness of the method, an adaptive algorithm is also proposed.   Numerical examples have confirmed the efficiency of the method in both 2D and 3D problems. Orders-of-magnitude speedups were obtained with the method for high-resolution problems, compared to  {the} finite element method. The proposed computational framework opens numerous opportunities for  simulating complex microstructure formation in materials on large-volume high-resolution meshes at a deeply reduced computational cost. 
  
  \end{abstract}

\begin{keyword} adaptive tensor decomposition \sep convolution finite element \sep Allen-Cahn equation   \sep high-resolution meshes \sep microstructure formation
%% keywords here, in the form: keyword \sep keyword

%% MSC codes here, in the form: \MSC code \sep code
%% or \MSC[2008] code \sep code (2000 is the default)

\end{keyword}

\end{frontmatter}

%%
%% Start line numbering here if you want
%%
%\linenumbers

\newcommand{\revision}[2]{\sout{#1} \textcolor{red}{(#2)}}

%% main text
\section{Introduction}
The Allen-Cahn (AC) equation, originally introduced by Allen and Cahn \cite{allen1979microscopic}, is a nonlinear reaction-diffusion equation that can be used to describe the process of phase separation in multi-component alloy systems or the so-called antiphase boundary motion in crystals. It has been widely used, sometimes under the name of phase field method, in various scientific and engineering areas, such as computational biology \cite{inan2020analytical} and material sciences \cite{zheng2014phase,yu2018phase,pinomaa2019quantitative,chadwick2021development}. However, solving the AC equation has been computationally challenging,  when a fine  discretization/resolution (in space and time) is needed to resolve the spatial and temporal length scales of the interface motion. This has been marked as an important issue with the phase field method for  modeling the detailed microstructure formation in materials, which consequently limits the size or dimension of computational domains with phase field simulations \cite{chadwick2021development,flint2018prediction,yang2021phase}. 

Various techniques have been developed for accelerating the solutions to the AC equation. An important step is to have an efficient time integration scheme for solving the nonlinear and transient AC equation. Existing schemes can be categorized into three main families: explicit, implicit, and semi-implicit. Explicit time integration has been appealing for its simplicity in terms of implementation. In explicit schemes \cite{chadwick2021development,shen2010numerical}, all the nonlinear terms are treated as constant for each time step, which prevents the extensive iterations for updating the solution variable during each time step. However, it is well known that explicit schemes suffer from the constraint  {to} extremely small time steps and do not satisfy the discrete energy requirement in the AC equation \cite{shen2010numerical}. On the contrary, implicit schemes \cite{condette2011spectral} can meet the energy requirement automatically and can enable relatively larger time steps, but require solving a nonlinear equation at each time step, which causes concerns on the convergence and computational expenses.  {In this regard, works have been done to improve the efficiency of implicit solvers (e.g., \cite{graser2013time,graser2015nonsmooth})}.  Semi-implicit schemes \cite{yang2009error} can be a good compromise between explicit and implicit schemes. In theses schemes, the nonlinear terms are still treated as constant like the explicit ones, whereas the other terms remain the same as the implicit ones. Therefore, semi-implicit schemes can have some advantages of both methods. Nevertheless, additional stabilization  to semi-implicit schemes is  needed to guarantee an unconditional satisfaction of the discrete energy requirement of the AC equation, leading to the so-called stabilized semi-implicit schemes \cite{shen2010numerical}. The stabilized semi-implicit time integration is free of time step constraint for energy stability and allows to solve a system of equations with  constant coefficients for each time step, which is very efficient and easy to implement.

We remark that there are some other time integration techniques that can have similar advantages to the stabilized semi-implicit schemes, such as convex splitting schemes \cite{shen2012second,wise2009energy},   {the} invariant energy quadratization approach \cite{zhao2017numerical,pan2023novel},  and  {the} scalar auxiliary variable approach \cite{shen2019new}. Since the focus of this work is not to study the different time integration methods, the stabilized semi-implicit time integration \cite{shen2010numerical} should be an adequate choice for our work for its simplicity in terms of implementation.

In addition to the time integration, the computational cost related to the spatial resolution is another important and remaining issue that limits the applications  of the AC equation. Conventional finite difference or finite element (FE) methods can be used for the spatial discretization, but both can be  expensive for high-resolution meshes. The  {rapid} growth of the computational cost with respect to the increasing degrees of freedom (DoFs) is usually observed with the conventional numerical methods. Model reduction (also called reduced order modeling) techniques  can overcome this limit and have been considered promising for solving complex large size problems.

In the literature, model reduction techniques  have been developed for both linear and nonlinear problems \cite{chinesta2017model,liu2018model,lu2024extended}. A relatively simple but efficient way to construct reduced order models is  homogenization or the so-called clustering approach that groups the DoFs of a full order system into fewer DoFs. Although the local physical information may be missed due to the reduced DoFs, the homogenization/clustering based reduced order models have demonstrated a great success in  applications for multi-scale modeling of materials \cite{liu2016self,li2019clustering}. Another type of model reduction method relies on the projection of the physical systems onto some (reduced order) latent spaces, based on data or preliminary knowledge about the potential solutions. This  includes reduced basis method \cite{maday2002reduced,hesthaven2016certified}, proper orthogonal decomposition (POD) \cite{willcox2002balanced,goury2016automatised,kerfriden2011bridging,lu2018space}, and hyper reduction \cite{ryckelynck2009hyper,carlberg2013gnat,zhang2017efficient,lu2020adaptive,kaneko2021hyper}, etc.  The proper generalized decomposition (PGD) method \cite{ladeveze2010latin,boucinha2014ideal,ammar2006new,chinesta2011short,lu2021reduced} can be viewed as an extension of POD, based on an assumption of separation of variables. This is a powerful technique for space-time and high-dimensional problems \cite{giacoma2016efficient,chinesta2013pgd}. Some other variants or further extensions of the PGD method include tensor decomposition \cite{zhang2022hidenn}, preconditioned PGD \cite{tang2024solving}, and extended tensor decomposition \cite{lu2024extended}, etc.

In the context of the AC  {problem}, very little work has been done on model reduction. Existing work mostly used POD \cite{song2016reduced,uzunca2017energy,zhou2019reduced,li2021numerical,wu2023certified} or the so-called generalized multiscale FE method \cite{tyrylgin2021multiscale} to construct reduced order models. These methods usually rely on an offline precomputed snapshot database, which causes concerns on the accuracy and  efficiency for high-resolution problems. This motivated us to develop a more efficient model reduction framework for solving the AC equation, especially for high-resolution solutions.

We rely on a recently developed convolution tensor decomposition (CTD) method \cite{lu2023convolution,li2023convolution}. The CTD method is based on a convolution FE (CFE) approximation \cite{lu2023convolution} and the tensor decomposition (or PGD) method. Here we do not distinguish the tensor decomposition and PGD, since they both share the same idea of separation of variables, sometimes called Canonical tensor decomposition \cite{kolda2009tensor} in a discrete form. Hence  {the proposed CTD method can also be called} convolution PGD (CPGD). The CTD method is expected to provide smooth and accurate solutions with  convergence to  pure CFE  method \cite{lu2023convolution} when increasing the number of modes, while keeping a high efficiency for high-resolution problems. No database is needed for the CTD method. Applications to high-resolution topology optimization have demonstrated its superior performance \cite{li2023convolution}. In this work, we further develop the CTD method for solving the AC equation.  {For the sake of the efficiency,} we propose to leverage the stabilized semi-implicit time integration scheme, as mentioned earlier, so that the nonlinear terms in the AC equation become constant coefficients for each time step.  {This also simplifies the implementation of CTD for the nonlinear AC equation.} Furthermore, an adaptive solution algorithm,  {which allows to perform adaptively a hybrid FE/CTD computation}, is developed to further enhance the efficiency and robustness of the  model reduction method when the number of modes becomes large.  Numerical examples including 2D and 3D cases have confirmed the efficiency of the method on high-resolution meshes. We believe that the developed CTD method has a great potential to enable high-performance phase field simulations of materials.

This paper is organized as follows. Section 2 presents the formulation of the AC equation,   {and} the energy stability requirement and different time integration schemes are  {also} discussed. Section 3 presents  the proposed CTD method and an adaptive algorithm. Section 4 presents some 2D and 3D numerical examples of the AC problems. Finally, the paper closes with some concluding remarks.  {Please notice that many details and derivations have been moved to Appendices for a better readability of the paper.}

\section{Problem formulation}
\subsection{Strong form}
The AC equation \cite{allen1979microscopic} is a second-order parabolic partial differential equation, which reads
\begin{equation}
\label{eq:AC}
    \frac{d u}{dt}+L\left (\frac{d \mathcal{F}}{du}-\kappa\Delta u \right)=0 {,} \quad \quad  \text{in}\ \Omega\times [0,T] {,}
\end{equation}
where $\Omega$ is the domain of interest, $T$ is the final time,   {$u : \Omega \rightarrow \mathbb{R}$} is an unknown non-conservative field (usually varying in $[-1,1]$ for describing different phases and their interface) and needs to be found with appropriate  initial and  boundary conditions, $L\in \mathbb{R}^+$ is the mobility, $\mathcal{F}(u)$ is a given bulk free energy,  $\kappa\in \mathbb{R}^+$ is the gradient energy parameter that controls the thickness of the interface between two phases.

The AC problem \eqref{eq:AC} can be considered as a gradient flow problem with the energy functional below
\begin{equation}
\displaystyle
        E {(u)}=\int_\Omega (\mathcal{F}{(u)}+\frac{1}{2} \kappa (\nabla u)^2)\ d \Omega {.}   
\end{equation}
Multiplying Eq. \eqref{eq:AC} with  $\left (\frac{d \mathcal{F}}{du}-\kappa\Delta u \right)$, we can find that 
\begin{equation}
\displaystyle
        \frac{\partial E}{\partial t}=-L\int_\Omega \left (\frac{d \mathcal{F}}{du}-\kappa\Delta u \right)^2\ d \Omega {,}   \quad\quad \forall t\in [0,T] {,}
\end{equation}
which implies that $ \frac{\partial E}{\partial t} \leq 0, \forall t\in [0,T]$. The discrete energy law is then 
\begin{equation}
\displaystyle
\label{eq:energylaw}
        E(t^{k+1})-E(t^{k})\leq 0 {,}
\end{equation}
where $t^{k}$ denotes the  {$k$-th} time step. This discrete energy law should be satisfied during the solution procedure of the AC problem \eqref{eq:AC}.  {In our work, a numerical solution is called "energetically stable" if it satisfies the discrete energy law.}

 {Different choices can be adopted here for the bulk free energy function $\mathcal{F}$. Examples can be found in \cite{gokieli2003discrete, gomez2011provably,tang2016implicit, ma2021mixed,song2016reduced}. In this work, we adopted the definition below
\begin{equation}
\label{eq:energyfunction}
   {\mathcal{F}(u)=a_0 (u^2-1)^2} {,}
\end{equation}
where $a_0$ is an energy coefficient. It can be seen that due to the definition of $\mathcal{F}$, the AC problem is usually nonlinear.}

\subsection{Weak form}
The AC equation can be solved by a FE method based on the weak form of the problem. Assuming the homogeneous Neumann boundary condition applies to the problem, i.e., $\nabla u\cdot \boldsymbol{n} =0$ on $\partial \Omega$ with $\boldsymbol{n}$ denoting the normal vector to the boundary, we can obtain weak form of the problem as below
\begin{equation}
\label{eq:AC-weak}
    \int_\Omega \delta u\ \frac{d u}{dt}\ d \Omega + \int_\Omega \delta u\  L w(u) \ d \Omega +\int_\Omega  \nabla \delta u\cdot L\kappa\nabla u \ d \Omega =0 {,} \quad \quad  \forall t \in  [0,T] {,}
\end{equation}
where $w(u)= \frac{d \mathcal{F}}{du}$, $\delta u$ is the test function.  {The above equation should hold for $\forall \delta u$ from a suitable function space, e.g., a Sobolev space with appropriate constraints}. 
\subsection{Semi-discretized form}
The discretization of problem \eqref{eq:AC-weak} can be done by considering the FE approximation
\begin{equation}
    {u_h}(x,y,z)=\sum_{i\in {A}}{N}_{{i}}(x,y,z) u_i=\boldsymbol{N}\boldsymbol{u} {,}
\end{equation}
where $(x,y,z)$ denotes the spatial coordinate,  $N_i$ is the nodal shape function, $u_i$ is the nodal solution,  {$A$ contains the supporting nodes of the FE shape functions}, $\boldsymbol{N}$ and $\boldsymbol{u}$ are the shape function and solution vectors formed by the $N_i$ and $u_i$. With this definition, the problem \eqref{eq:AC-weak} becomes
\begin{equation}
\label{eq:AC-discrete}
    \int_\Omega \delta \boldsymbol{u}^T \boldsymbol{N}^T\ \frac{d {u}}{dt}\ d \Omega + \int_\Omega \delta\boldsymbol{u}^T \boldsymbol{N}^T\  L w(\boldsymbol{u}) \ d \Omega +\int_\Omega  \delta\boldsymbol{u}^T \boldsymbol{B}^T L\kappa \boldsymbol{B}\boldsymbol{u} \ d \Omega =0 {,} \quad \quad  \forall t \in  [0,T] {,}
\end{equation}
where $\boldsymbol{B}$ is the gradient of the shape function,  {i.e., $\boldsymbol{B}=[\frac{\partial \boldsymbol{N}}{\partial x}\  \frac{\partial \boldsymbol{N}}{\partial y}\  \frac{\partial \boldsymbol{N}}{\partial z}]^T$}.  {By considering that Eq. \eqref{eq:AC-discrete} holds for arbitrary $\delta \boldsymbol{u}$, the final} semi-discretized form reads
\begin{equation}
\label{eq:AC-discrete-final}
    \int_\Omega  \boldsymbol{N}^T\ \frac{d {u}}{dt}\ d \Omega + \int_\Omega \boldsymbol{N}^T\  L w(\boldsymbol{u}) \ d \Omega +\int_\Omega  \boldsymbol{B}^T L\kappa \boldsymbol{B}\boldsymbol{u} \ d \Omega =0 {,} \quad \quad  \forall t \in  [0,T] {.}
\end{equation}

\subsection{Time integration}
Now a time integration scheme needs to be used for solving the problem \eqref{eq:AC-discrete-final}. In general, three types of schemes can be defined, i.e., explicit, implicit, and semi-implicit schemes. The explicit scheme is the simplest one and can be written as 
\begin{equation}
\label{eq:AC-discrete-expl}
    \int_\Omega  \boldsymbol{N}^T\ \frac{u^{k+1}-u^{k}}{\Delta t}\ d \Omega + \int_\Omega \boldsymbol{N}^T\  L w(\boldsymbol{u}^k) \ d \Omega +\int_\Omega  \boldsymbol{B}^T L\kappa \boldsymbol{B}\boldsymbol{u}^k \ d \Omega =0  {,}
\end{equation}
or 
\begin{equation}
    \int_\Omega  \boldsymbol{N}^T\boldsymbol{N}\ \frac{\boldsymbol{u}^{k+1}-\boldsymbol{u}^{k}}{\Delta t}\ d \Omega + \int_\Omega \boldsymbol{N}^T\  L w(\boldsymbol{u}^k) \ d \Omega +\int_\Omega  \boldsymbol{B}^T L\kappa \boldsymbol{B}\boldsymbol{u}^k \ d \Omega =0  {,}
\end{equation}
where $u^{k+1}$ is the unknown solution at the current step $t^{k+1}$, $u^{k}$ is the solution computed at the previous time step $t^k$,  {and $\Delta t= t^{k+1}-t^k$ is the time increment}. This formulation provides many advantages in terms of implementation and solution efficiency for each time step. However, very small time increments are usually required and can result in unreasonable computational costs when a large time scale is considered. Furthermore, the  energy law \eqref{eq:energylaw} is not guaranteed with  {the  solutions obtained by the explicit scheme}.

The implicit scheme can automatically guarantee the energy law and can be  {written} as
\begin{equation}
\label{eq:AC-discrete-impl}
    \int_\Omega  \boldsymbol{N}^T\ \frac{u^{k+1}-u^{k}}{\Delta t}\ d \Omega + \int_\Omega \boldsymbol{N}^T\  L w(\boldsymbol{u}^{k+1}) \ d \Omega +\int_\Omega  \boldsymbol{B}^T L\kappa \boldsymbol{B}\boldsymbol{u}^{k+1} \ d \Omega =0 {.} 
\end{equation}
The main challenge with this type of scheme is to solve the nonlinear problem,  {since Eq. \eqref{eq:AC-discrete-impl} needs to be solved many times while updating $w(\boldsymbol{u}^{k+1})$ at each time step.} 

A compromise between the implicit and explicit schemes is the semi-implicit scheme, which reads
\begin{equation}
\label{eq:AC-discrete-semi}
    \int_\Omega  \boldsymbol{N}^T\ \frac{u^{k+1}-u^{k}}{\Delta t}\ d \Omega + \int_\Omega \boldsymbol{N}^T\  L w(\boldsymbol{u}^{k}) \ d \Omega +\int_\Omega  \boldsymbol{B}^T L\kappa \boldsymbol{B}\boldsymbol{u}^{k+1} \ d \Omega =0 {.} 
\end{equation}
In this case, the nonlinear term $w(\boldsymbol{u}^{k})$ becomes constant for each time step. The problem becomes solving a linear system of  {equations} and allows the use of relatively  {larger} time steps than that of explicit scheme. This feature is very attractive. However, the energy law \eqref{eq:energylaw} still imposes a step size constraint for this scheme \cite{shen2010numerical}. It can be shown that the following condition has to be satisfied so that the semi-implicit solutions are energetically stable 
\begin{equation}
\label{eq:AC-discrete-semi-step}
    \Delta t \leq \frac{2}{L\ \underset{{t\in [0, T]}}{\max}(w')} {,}
\end{equation}
where $w'=\frac{d w}{d u}$. This undesired constraint can be overcome by a stabilized semi-implicit formulation presented in the next section.

\subsection{Stabilized semi-implicit formulation}
We adopt here a stabilized semi-implicit formulation developed in \cite{shen2010numerical} for overcoming the time step constraint due to the energy requirement. Other stabilizing techniques \cite{hu2009stable,wise2009energy} can also be considered for the same purpose. Testing the different stabilizing techniques is out of the scope of the paper. Hence, we only present the one used in this work, which reads
\begin{equation}
\label{eq:AC-discrete-semi-stable}
    \int_\Omega  \boldsymbol{N}^T\ (u^{k+1}-u^{k})(\frac{1}{\Delta t}+\alpha L)\ d \Omega + \int_\Omega \boldsymbol{N}^T\  L w(\boldsymbol{u}^{k}) \ d \Omega +\int_\Omega  \boldsymbol{B}^T L\kappa \boldsymbol{B}\boldsymbol{u}^{k+1} \ d \Omega =0  {,}
\end{equation}
where $\alpha$ is a stabilizing coefficient.  {To simplify the notation, we omit the superscript of $\boldsymbol{u}^{k+1}$ and let $\boldsymbol{u}$ denote the solution at the step $k+1$. Then,} the final discretized equation reads 
\begin{equation}
\label{eq:AC-discrete-semi-stable-K}
    \bold{K} \boldsymbol{u}=\bold{Q} {,}
\end{equation}
with
\begin{equation}
\begin{cases}
        \bold{K}=\int_\Omega (1/{\Delta t}+\alpha L)\boldsymbol{N}^T\boldsymbol{N}\ d\Omega+\int_\Omega \boldsymbol{B}^T L\kappa \boldsymbol{B}\ d\Omega {,}\\
        \bold{Q}= \int_\Omega (1/{\Delta t}+\alpha L)\boldsymbol{N}^T\boldsymbol{N}\boldsymbol{u}^{k}\ d\Omega-\int_\Omega \boldsymbol{N}^T\  L w(\boldsymbol{u}^{k}) \ d \Omega {.}
\end{cases}
\end{equation}

It can be demonstrated that this formulation is  {energetically stable for arbitrary $\Delta t$}, i.e., the energy law \eqref{eq:energylaw} is satisfied for arbitrary $\Delta t$, if $\alpha$  is chosen in the following way
\begin{equation}
\label{eq:AC-discrete-semi-stable-step}
    \alpha \geq \frac{\max(w')}{2} {.}
\end{equation}
The proof can be found in  \ref{apdx:proof}. Since $w'$ is usually bounded with the definition of free energy and the varying range of $u$, it is easy to know the appropriate $\alpha$ once the energy function is defined.  {In the case of  {unbounded} $w'$, we can either limit the range of $u$ or use another stabilization/time integration technique (e.g., \cite{zhao2017numerical,pan2023novel}). The CTD method descriped in the next section can still be applied in the same way.} 

It should be noted that the advantage of "energetically stable"  does not mean $\Delta t$ can be arbitrarily large without influencing the accuracy of the final solution. The accuracy is still influenced by the time discretization in this case, and  {the stabilized semi-implicit scheme is of the same order of accuracy, for the same $\Delta t$, as the original semi-implicit scheme \eqref{eq:AC-discrete-semi},  {as explained in \cite{shen2010numerical}}. For an accurate time integration, a convergence study on the time discretization $\Delta t$ is still needed.} This is reasonable as we do not want the time increment to be limited by anything other than the discretization error.  Higher order semi-implicit formulations can be considered for improved accuracy, but they are not the focus of this work.

In summary, the stabilized semi-implicit formulation allows to release the constraint on the step size due to the energy requirement.  {Compared to} the explicit formulation, relatively  {larger} time steps can be used for solving \eqref{eq:AC-discrete-semi-stable}, which significantly reduces the computational cost for time integration. Nevertheless, solving \eqref{eq:AC-discrete-semi-stable} can still be expensive if high-resolution meshes are used for the spatial discretization. This issue can be addressed by developing the CTD formulation, as presented in the next section.

\section{The proposed CTD formulation for the AC equation}
The CTD formulation includes three ingredients: separation of variables, convolution approximation, and a dedicated solution algorithm.
\subsection{Separation of variables}
The CTD method developed in this work is based on a full separation of spatial variables, which assumes the solution function has the following decomposition
\begin{equation}
\label{eq:CTD}
    u(x,y,z)=\underbrace{\sum_{m=1}^{M} u^{(m)}_x(x)u^{(m)}_y(y)u^{(m)}_z(z)}_{u^{\text{CTD}}} {,}
\end{equation}
where $u(x,y,z)$ denotes the solution of a 3D AC problem, $u^{\text{CTD}}$ is an approximated  solution provided by the CTD method, $M$ is the total number of modes in the CTD solution,  $u^{(m)}_x, u^{(m)}_y, u^{(m)}_z$ are some unknown 1D functions related to  $x,y,z$ directions, respectively. The sum of the products of these 1D functions $u^{(m)}_x, u^{(m)}_y, u^{(m)}_z$ is supposed to provide an approximated solution to $u$. If the number of modes is large enough, the CTD solution should converge to the "exact" solution. Computing the unknown functions $u^{(m)}_x, u^{(m)}_y, u^{(m)}_z$ is essential for the CTD solution. They should be determined by solving the  AC equation,  together with the number of modes  $M$.  The details of the solution algorithm  will be presented later. Please note that the decomposition used here is the same as that in PGD \cite{ammar2006new,ghnatios2019advanced}. Hence, the CTD method can also be called CPGD \cite{lu2023convolution}. We do not distinguish the two methods in this work. 

The advantages of such  {a} decomposition  {are} multi-fold. First, the decomposition enables solving a 3D problem at the costs of a series of 1D problems, since the unknowns  of the problem become  {three} 1D functions $u^{(m)}_x, u^{(m)}_y, u^{(m)}_z$, rather than a 3D function $u(x,y,z)$.  {The computational time and the RAM (random-access memory) requirement during the online computation}  can be significantly reduced. Second, unlike conventional FE methods, whose DoFs grow  {rapidly}  with respect to the mesh resolution, the decomposition enables a linear growth of DoFs. For example, if the mesh has $n$ nodes in each $x,y,z$ direction, the total number of DoFs for a conventional FE analysis is $n^3$, which grows  {rapidly} with $n$, whereas the number of DoFs for CTD is  {$3nM$}, which grows just linearly with $n$. This makes a huge advantage for solving high-resolution (large $n$) problems. Third, the storage space for the solution data can be largely reduced with the decomposition, as only the 1D functions $u^{(m)}_x, u^{(m)}_y, u^{(m)}_z$ need to be stored.  In this sense, the CTD solution data  is automatically in a compressed data format.  

We remark that Eq. \eqref{eq:CTD} assumes regular Cartesian meshes are used for the spatial discretization. This is usually the case for material microstructure modeling. In the case of irregular  {geometries, for which a Cartesian mesh cannot be directly applied}, a geometric mapping can be introduced to handle the problem. An example of such  {a} mapping can be derived from  {the techniques developed for} isogeometric analysis \cite{hughes2005isogeometric,maquart20203d}, as presented in \cite{lu2023convolution,zhang2023isogeometric,kazemzadeh2023nurbs}.  {We will incorporate these techniques in our future work for irregular geometries.}

\subsection{Convolution approximation}
Convolution approximation is another important component in the CTD method. Here, we presents the idea of the so-called CFE approximation \cite{lu2023convolution}.  {Let us consider first a FE approximation written in the natural (local) coordinate system $\boldsymbol{\xi}$}
\begin{equation}
\label{eq:FE_local}
    \begin{aligned}           
    {u^e_h}(\boldsymbol{\xi})=&\sum_{i\in A^e}{N}_{{i}}(\boldsymbol{\xi})u_i,
    \end{aligned}
\end{equation}
where $A^e$ is the local support of an element and is defined as the original FE support when presenting the CFE approximation. For a 2D linear element, $A^e$ has 4 nodes, as shown in the left figure in \figurename~\ref{fig:convapprox}. In general, the 4 nodes can only allow to construct a linear approximation. If higher order elements are needed, more nodes need to be included into $A^e$. This is the usual way to increase the  approximation order in FE.

{CFE uses a different way to increase the approximation order and smoothness. The idea is to keep the same FE mesh (element connectivity and nodes) while using the neighboring nodes of the element to construct high order approximations, with the help of a convolution operation and a smooth function $W$. This smooth function $W$ is called convolution patch function and has a support defined by the so-called convolution patch, as shown in the second figure in \figurename~\ref{fig:convapprox}. In this way, a smooth approximation can be constructed by performing the convolution between the FE approximation and the  patch function $W$, which reads}
\begin{equation}
\displaystyle
\label{eq:convolution}
     u^c (\boldsymbol{\xi})=(u^e_h*W)(\boldsymbol{\xi})=\int_{\Omega_{\text{patch}}} u^e_h(\boldsymbol{\eta})W(\boldsymbol{\xi-\boldsymbol{\eta}})d\boldsymbol{\eta},
\end{equation}
{where $\Omega_{\text{patch}}$ is the convolution patch domain, which can span outside an element with a predefined patch size. Since $W$ can be an arbitrarily high order smooth function, the resultant approximation $u^c$ can also be {of} arbitrarily high order. Considering a discrete form of the above convolution operation \eqref{eq:convolution}, we can have the CFE approximation conceptually written as} 
\begin{equation}
\label{eq:CFEM_concept}
    \begin{aligned}           
    {u^{\text{CFE}}}(\boldsymbol{\xi})=\sum_{k\in A^e_s} \tilde{{N}}_k(\boldsymbol{\xi}) u_k {,}\\
    \end{aligned}
\end{equation}
where $A^e_s$ is the expanded support for the CFE shape functions $\tilde{{N}}_k$, as shown in right figure in \figurename~\ref{fig:convapprox}, $u_k$ is the associated nodal solution. This support $A^e_s$ contains the nodes in $\Omega_{\text{patch}}$, including not only the original FE support $A^e$, but also the nodes surrounding the element. This can be applied to all the elements within the computational domain by moving the convolution patch throughout the domain. For the elements on the boundary, the patch can be reduced to fit the boundary by ignoring the part outside the domain.
\begin{figure}[htbp]
\centering 
{\includegraphics[scale=0.35]{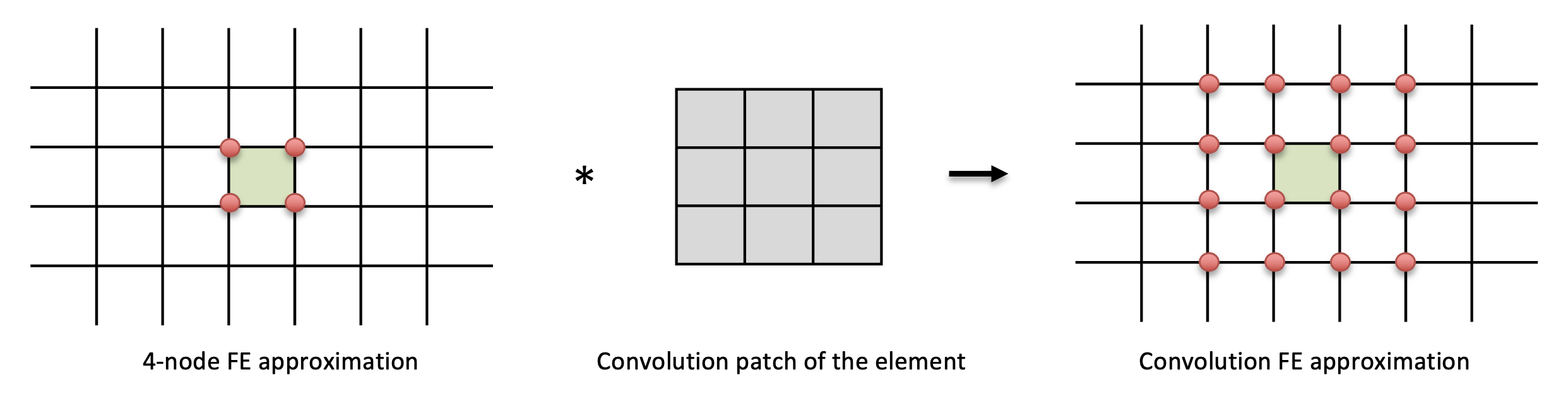}}
\caption{Supporting nodes for  FE and CFE approximations, left: $A^e$, right: $A^e_s$,  {the square in the left figure represents the domain of an element}}
\label{fig:convapprox}
\end{figure}

{In order to get the precise definition for $\tilde{{N}}_k$, we can consider the following equation \cite{lu2023convolution}} 
\begin{equation}
\label{eq:CFEM}
    \begin{aligned}           
    {u^{\text{CFE}}}(\boldsymbol{\xi})=&\sum_{i\in A^e}{N}_{{i}}(\boldsymbol{\xi})\sum_{j\in A^i_s} {W}^{\boldsymbol{\xi}_i}_{a,j} (\boldsymbol{\xi}) u_j=\sum_{k\in A^e_s} \tilde{{N}}_k(\boldsymbol{\xi}) u_k {,}\\
    \end{aligned}
\end{equation}
where $N_i$ is  {a} standard FE shape function, ${W}^{\boldsymbol{\xi}_i}_{a,j}$ is  {the} radial basis interpolation function centered at $\boldsymbol{\xi}_i$ (see e.g., \ref{apdx:radialbasis}), $A^e$ is the original FE support, $A^i_s$ is the nodal patch defined in the radial basis interpolation, $A^e_s=\underset{i\in A^e}{\bigcup}  A^i_s$ contains all the supporting nodes in the convolution patch.  {\figurename~\ref{fig:nodalpatch} illustrates an example of forming the convolution patch using the nodal patch. The final CFE shape function $\tilde{{N}}_k$ can be obtained by developing the double-summation and combining the terms with common coefficients.  For better understanding about how to construct  the convolution approximation, readers can be referred to  \ref{apdx:1DCFEshapefunction}. }

\begin{figure}[htbp]
\centering 
{\includegraphics[scale=0.23]{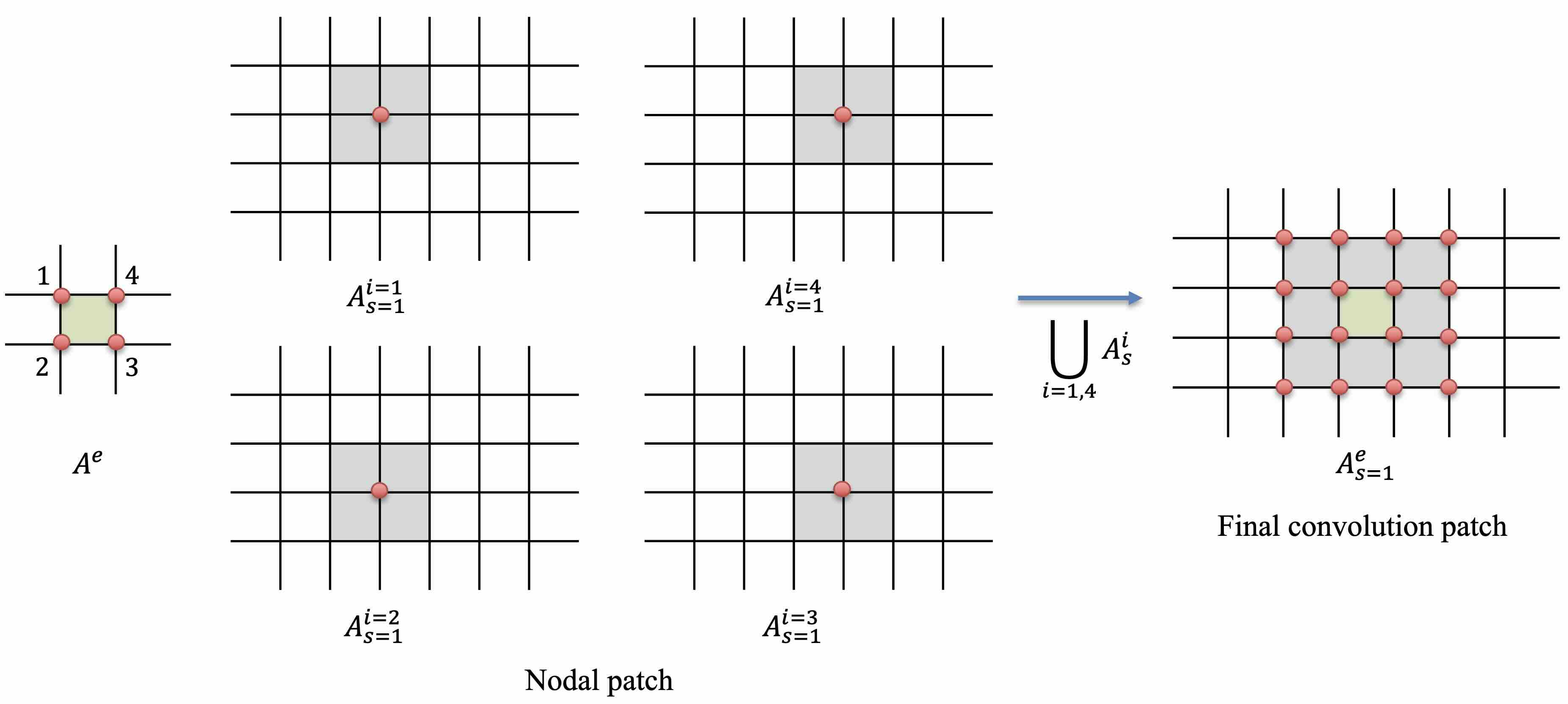}}
\caption{ {Examples of supporting nodes for $A^e$, $A_s^i$, and $A^e_s$, $s$ is the patch size that defines the number of layers outside an element}}
\label{fig:nodalpatch}
\end{figure}

{In the CFE approximation}, three controllable parameters can be defined: polynomial order $p\in \mathbb{Z}^+$ of $W$, dilation parameter $a \in \mathbb{R}^+$, and patch size $s\in \mathbb{Z}^+$. The polynomial order $p$ controls the convergence rate of the convolution approximation. The dilation $a$ controls the window size {(influencing domain)} of kernel functions in $W$. The patch size $s$ defines the number of layers of elements outside the original FE element.  {\figurename~\ref{fig:nodalpatch}} is an example for $s=1$.

An example of the 1D CFE shape functions in the natural coordinate system $\xi\in [-1,1]$ can be found in \figurename~\ref{fig:1Dshape}. The CFE shape functions have a clearly better smoothness than the linear FE, even with $p=1$. To better visualize the global shape of these functions and the influence of different control parameters, we also illustrate the CFE shape functions in the global coordinate system $x\in [-5,5]$ with different $p$ and $a$, as shown in \figurename~\ref{fig:CFEshape-global}. We can see that the CFE shape functions automatically satisfy the Kronecker delta property,  {i.e., $\tilde{{N}}_i(\boldsymbol{\xi}_j)=\delta_{ij}$},  and the partition of unity,  {i.e., $\sum_{i} \tilde{{N}}_i(\boldsymbol{\xi})=1$}, by construction.  Again, the detailed construction of these functions can be found in \ref{apdx:radialbasis} and \ref{apdx:1DCFEshapefunction}.

\begin{figure}[htbp]
\centering 
{\includegraphics[scale=0.35]{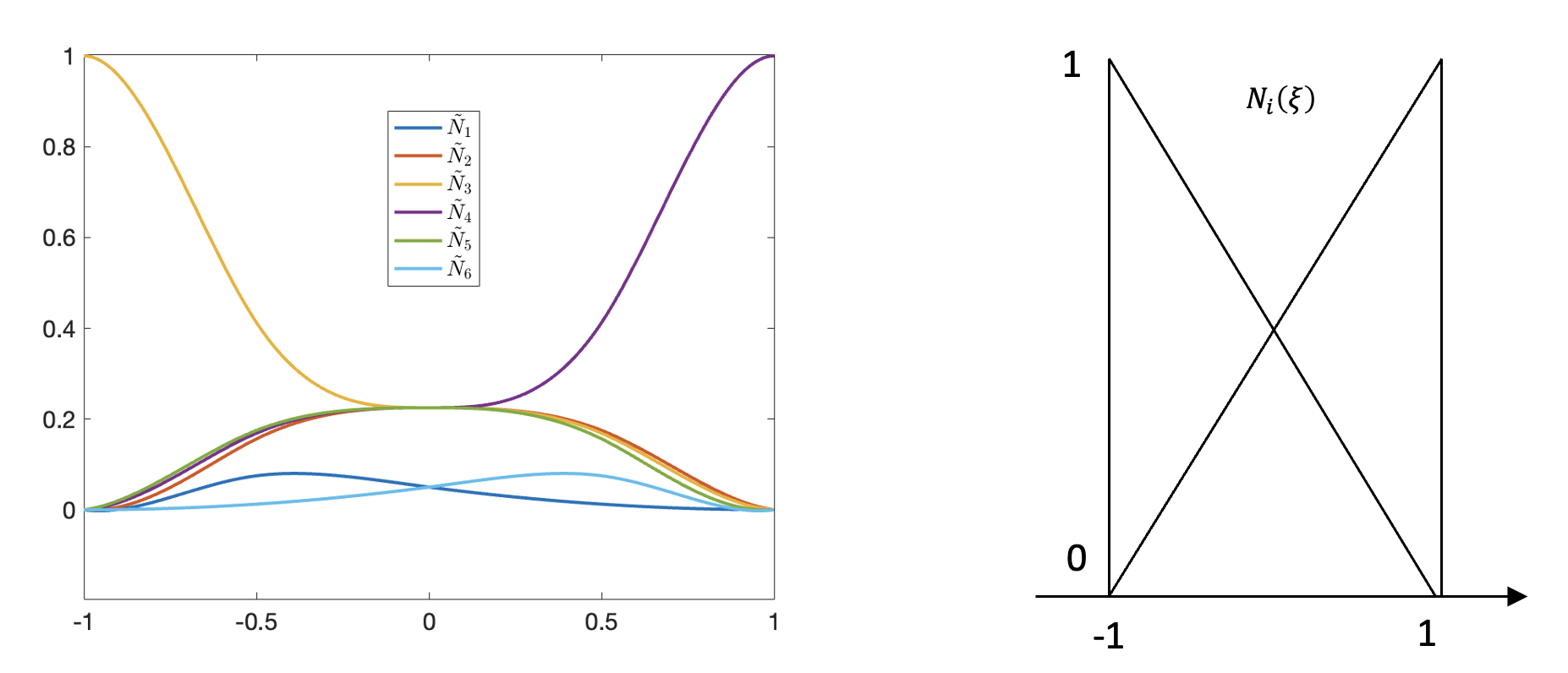}}
\caption{1D CFE and FE shape functions in the natural coordinate system $\xi\in [-1,1]$, left: CFE shape functions $\tilde{{N}}_k({\xi})$ with $s=2, p=1, a=1$, right: linear FE shape functions ${{N}}_i({\xi})$  }
\label{fig:1Dshape}
\end{figure}

\begin{figure}[htbp]
\centering
\subfigure[$p=1, a=1$]{\includegraphics[scale=0.36]{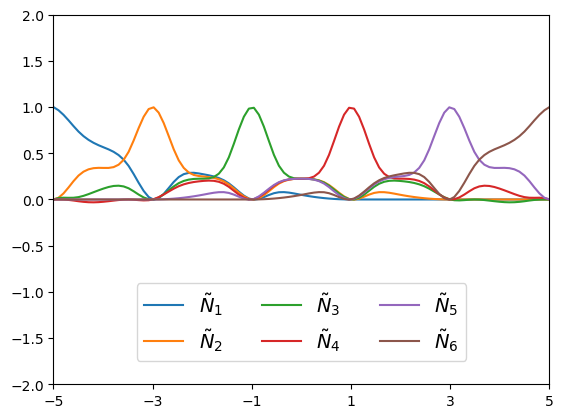}}
\subfigure[$p=1, a=2$]{\includegraphics[scale=0.36]{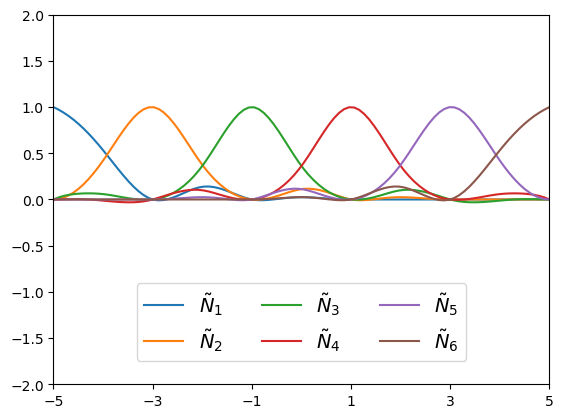}}
\subfigure[$p=2, a=2$ ]{\includegraphics[scale=0.36]{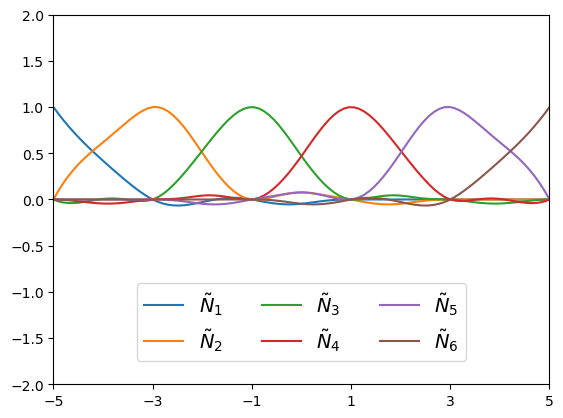}}
\caption{ {CFE shape function in global coordinate system, $\tilde{{N}}_k({x})$ with $x\in [-5,5]$, patch size $s=2$}}
\label{fig:CFEshape-global}
\end{figure}

The superior accuracy and convergence rate of the convolution approximation have been discussed in our previous work \cite{lu2023convolution,li2023convolution,park2023convolution,lu2024convolution}, by comparing with conventional FE approaches. We summarize here the key observations and advantages of the CFE method.
\begin{itemize}
    \item CFE enables arbitrarily high order approximations without changing the mesh, offering flexible $p$-adaptivity.  {Standard FE would have to either insert more nodes into the mesh or increase the element size to include more nodes in the element for high order approximations. This sometimes limits the $p$-adaptivity for standard FE.} 
    \item CFE outperforms FE in terms of accuracy and smoothness of the solution,  {if the same computational cost is considered for both CFE and FE (e.g., \cite{park2023convolution,lu2024convolution}).}
    \item CFE offers additional control parameters, dilation $a$ and patch size $s$, for the length scale control on a fixed mesh, though large  $a$ is usually suggested for better accuracy.
    \item CFE keeps the Kronecker delta property and the partition of unity, facilitating the implementation of Dirichlet boundary conditions,  {compared to other methods that can enable arbitrarily high order approximations, such as meshfree methods (e.g., \cite{nayroles1992generalizing,liu1995reproducing1,li1996moving,liu2010smoothed,chen2017reproducing})}.
\end{itemize}

In the CTD method, only 1D CFE functions are needed, due to the decomposition \eqref{eq:CTD}. The convolution approximation is applied to each of the 1D functions,  $u^{(m)}_x, u^{(m)}_y, u^{(m)}_z$, as below
\begin{equation}
\label{eq:cfeshape1D}
    \begin{aligned}           
    u^{(m)}_x=& \tilde{\boldsymbol{N}}_x \boldsymbol{u}^{(m)}_x {,}\\
    u^{(m)}_y=& \tilde{\boldsymbol{N}}_y\boldsymbol{u}^{(m)}_y {,}\\
    u^{(m)}_z=& \tilde{\boldsymbol{N}}_z \boldsymbol{u}^{(m)}_z {,}\\
    \end{aligned}
\end{equation}
where $\tilde{\boldsymbol{N}}$ is the CFE shape function vector formed by $\tilde{{N}}_k$.  ${u}^{(m)}_x, {u}^{(m)}_y, {u}^{(m)}_z$ are the nodal solutions in each direction.  {Based on the theoretical analysis in \cite{lu2023convolution},} the CTD solution should converge to the CFE solution \eqref{eq:CFEM} with an increased number of modes,  {since they use the same shape functions for approximation}. This has been confirmed by our numerical studies \cite{lu2023convolution}. Moreover, CTD  has the same advantages as CFE in terms of  arbitrarily high order approximations, smoothness, and accuracy, etc. 

\subsection{Solution algorithm}
\label{sect:solualgo}
The CTD solution algorithm for solving the AC equation is presented here. For notation simplification, we first rewrite Eq. \eqref{eq:CTD} as below
\begin{equation}
\label{eq:u-increment}
    u(x,y,z)={\sum_{m=1}^{M-1} u^{(m)}_x u^{(m)}_y u^{(m)}_z}+u_x u_y u_z {,}
\end{equation}
where $u_x, u_y, u_z$ denote the mode functions for the $M$-th mode at a given time step $t^{k+1}$. Without loss of generality, we can assume the $M-1$ modes are already computed and known by previous computations. We present here how to compute the $M$-th mode.

First, taking the variation of $u$, we have 
\begin{equation}
\label{eq:deltau}
    \delta u(x,y,z)= \delta u_x u_y u_z +  u_x \delta u_y u_z + u_x u_y \delta u_z {.}
\end{equation}
Then, we can plug \eqref{eq:u-increment} and \eqref{eq:deltau} into the following stabilized semi-implicit formulation of the AC problem
\begin{equation}
    \int_\Omega  \delta u \ (u^{k+1}-u^{k})(\frac{1}{\Delta t}+\alpha L)\ d \Omega + \int_\Omega \delta u \  L w({u}^{k}) \ d \Omega +\int_\Omega  \nabla\delta u\cdot  L\kappa \nabla{u}^{k+1} \ d \Omega =0  {,}
\end{equation}
with $u^{k+1}=u={\sum_{m=1}^{M-1} u^{(m)}_x u^{(m)}_y u^{(m)}_z}+u_x u_y u_z$ and $u^{k}=\sum_{m=1}^{M_{k}} u^{(m,k)}_x u^{(m,k)}_y u^{(m,k)}_z $, where $u^{k}$ is known by the previous time step $t^k$. This will lead to a nonlinear problem related to $u_x, u_y, u_z$. We can use an alternating fixed point algorithm \cite{ammar2006new,lu2023convolution}  to solve the problem. The overall solution procedure  {using the alternating fixed point algorithm} can be summarized as below:
\begin{enumerate}
    \item Initialize $u_x$, $u_y$, and $u_z$
    \item Fix the $u_y$ and $u_z$ and solve for the $u_x$ 
    \item Fix the $u_x$ and $u_z$ and solve for the $u_y$ 
    \item Fix the $u_x$ and $u_y$ and solve for the $u_z$  
    \item Check convergence of the mode: $|u_x u_y u_z|\rightarrow$ constant
    \item If convergence, go to next step. Otherwise, return to Step 2. 
    \item Increase the number of mode, i.e., $M \leftarrow M+1$, and repeat from Step 1 for the next mode until ${|u_x u_y u_z|}\rightarrow 0$
\end{enumerate}

For a better understanding of the above algorithm, we illustrate here a detailed solution procedure with the final CTD discretized formulation based on a 2D problem, i.e., $u^{k+1}=\sum_{m=1}^{M-1} u^{(m)}_x u^{(m)}_y +u_x u_y $, and assume $u^{k}=\sum_{m=1}^{M_{k}} u^{(m,k)}_x u^{(m,k)}_y $ is already known. The detailed derivation for the discretized formulation  {\eqref{eq:ux}-\eqref{eq:KyQy}} can be found in \ref{apdx:CTDderivation}. 
\begin{enumerate}
    \item Initialize $u_x$ and $u_y$
    \item Fix the $u_y$ and solve for the $u_x$ with the following discretized equation
    \begin{equation}
    \label{eq:ux}
    \bold{K}_x\boldsymbol{u}_x=\bold{Q}_x {,}
    \end{equation}
with 
\begin{equation}
\label{eq:KxQx}
\begin{aligned}
\begin{cases}
    \bold{K}_x=&(1/\Delta t + \alpha L)\bold{M}_{xx}\boldsymbol{u}_y^T\bold{M}_{yy}\boldsymbol{u}_y+L\kappa\bold{K}_{xx}\boldsymbol{u}_y^T\bold{M}_{yy}\boldsymbol{u}_y+L\kappa\bold{M}_{xx}\boldsymbol{u}_y^T\bold{K}_{yy}\boldsymbol{u}_y {,}\\
    \bold{Q}_x=&\sum_{m=1}^{M_k}(1/\Delta t + \alpha L)\bold{M}_{xx}\boldsymbol{u}^{(m,k)}_x\boldsymbol{u}_y^T\bold{M}_{yy}\boldsymbol{u}^{(m,k)}_y\\&-\sum_{m=1}^{M-1}(1/\Delta t + \alpha L)\bold{M}_{xx}\boldsymbol{u}^{(m)}_x\boldsymbol{u}_y^T\bold{M}_{yy}\boldsymbol{u}^{(m)}_y\\
    &-\bold{M}_{xy}\boldsymbol{u}_y\\
    &-\sum_{m=1}^{M-1}L\kappa\bold{K}_{xx}\boldsymbol{u}^{(m)}_x\boldsymbol{u}_y^T\bold{M}_{yy}\boldsymbol{u}^{(m)}_y\\
     &-\sum_{m=1}^{M-1}L\kappa\bold{M}_{xx}\boldsymbol{u}^{(m)}_x\boldsymbol{u}_y^T\bold{K}_{yy}\boldsymbol{u}^{(m)}_y {,}
\end{cases} 
\end{aligned}
\end{equation}
and 
\begin{equation}
\label{eq:KxxMxx}
\begin{aligned}
\begin{cases}
    \bold{K}_{xx}=&\int_{\Omega_x} \Tilde{\boldsymbol{B}}_x^T\Tilde{\boldsymbol{B}}_x\ dx  {,}\\
    \bold{M}_{xx}=&\int_{\Omega_x} \Tilde{\boldsymbol{N}}_x^T\Tilde{\boldsymbol{N}}_x\ dx  {,}\\
    \bold{K}_{yy}=&\int_{\Omega_y} \Tilde{\boldsymbol{B}}_y^T\Tilde{\boldsymbol{B}}_y\ dy  {,}\\ 
    \bold{M}_{yy}=&\int_{\Omega_y} \Tilde{\boldsymbol{N}}_y^T\Tilde{\boldsymbol{N}}_y\ dy {,}\\
    \bold{M}_{xy}=&\int_{\Omega} \Tilde{\boldsymbol{N}}_x^T\Tilde{\boldsymbol{N}}_yLw({u}^{k})\ dx dy {,}
\end{cases} 
\end{aligned}
\end{equation}
where $\Tilde{\boldsymbol{N}}$ is the CFE shape function, $\Tilde{\boldsymbol{B}}$ is the derivative of $\Tilde{\boldsymbol{N}}$, $\Omega:=\Omega_x \times \Omega_y$. 
    \item Fix the $u_x$ and solve for the $u_y$ with the following discretized equation
     \begin{equation}
    \bold{K}_y\boldsymbol{u}_y=\bold{Q}_y {,}
    \end{equation}
with 
\begin{equation}
\label{eq:KyQy}
\begin{aligned}
\begin{cases}
    \bold{K}_y=&(1/\Delta t + \alpha L)\bold{M}_{yy}\boldsymbol{u}_x^T\bold{M}_{xx}\boldsymbol{u}_x+L\kappa\bold{K}_{yy}\boldsymbol{u}_x^T\bold{M}_{xx}\boldsymbol{u}_x+L\kappa\bold{M}_{yy}\boldsymbol{u}_x^T\bold{K}_{xx}\boldsymbol{u}_x {,}\\
    \bold{Q}_y=&\sum_{m=1}^{M_k}(1/\Delta t + \alpha L)\bold{M}_{yy}\boldsymbol{u}^{(m,k)}_y\boldsymbol{u}_x^T\bold{M}_{xx}\boldsymbol{u}^{(m,k)}_x\\&-\sum_{m=1}^{M-1}(1/\Delta t + \alpha L)\bold{M}_{yy}\boldsymbol{u}^{(m)}_y\boldsymbol{u}_x^T\bold{M}_{xx}\boldsymbol{u}^{(m)}_x\\
    &-\bold{M}_{xy}^T\boldsymbol{u}_x\\
    &-\sum_{m=1}^{M-1}L\kappa\bold{K}_{yy}\boldsymbol{u}^{(m)}_y\boldsymbol{u}_x^T\bold{M}_{xx}\boldsymbol{u}^{(m)}_x\\
     &-\sum_{m=1}^{M-1}L\kappa\bold{M}_{yy}\boldsymbol{u}^{(m)}_y\boldsymbol{u}_x^T\bold{K}_{xx}\boldsymbol{u}^{(m)}_x {.}
\end{cases} 
\end{aligned}
\end{equation}
    \item Check convergence of the mode: $|u_x u_y|\rightarrow$ constant,  {e.g., $\max (| (\boldsymbol{u}_x \boldsymbol{u}_y^T)^{(i)}- (\boldsymbol{u}_x \boldsymbol{u}_y^T)^{(i-1)}|)$ $\leq \epsilon_1$, where $i$ is the iteration number, $\epsilon_1$ is a small value $\mathcal{O}(10^{-2})$}  
    \item If convergence, go to next step. Otherwise, return to Step 2
    \item Increase the number of mode, i.e., $M \leftarrow M+1$, and repeat from Step 1 for the next mode  {until ${|u_x u_y|}\rightarrow 0$, e.g., $\max (|\boldsymbol{u}_x \boldsymbol{u}_y^T|)\leq \epsilon_2$, with $\epsilon_2$  a small value $\mathcal{O}(10^{-4})$}
\end{enumerate}
Once the criterion is satisfied, the total number of mode $M$ is determined and the CTD solution for $u^{k+1}$ is obtained. The same procedure is performed for each time step.

As we can see in the solution algorithm, the multidimentional problem is divided into several 1D problems that are cheap to solve. The discretized matrices $\bold{K}_x$ and $\bold{K}_y$ have a dimension much smaller than the $\bold{K}$ of the original discretized full order system \eqref{eq:AC-discrete-semi-stable-K}. This leads to a model order reduction. Solving 1D problems is much cheaper than the 2D/3D problems when high-resolution meshes are used for spatial discretization. Numerical results have confirmed this point and will be presented later.

We remark that the matrix $\bold{M}_{xy}$ in \eqref{eq:KxxMxx} needs a full integration in the domain $\Omega$, which can be easily parallelized and should not be a concern when dealing with high-resolution meshes. An alternative way to perform the integration is to first decompose $w(u^k)$ using singular value decomposition or HOPGD \cite{lu2018multi}, i.e., $w=\sum_{q=1}^{Q} w_x^{(q)}(x)w_y^{(q)}(y)$, and then integrate in a separated form as below
 \begin{equation}
 \label{eq:MxySep}
\bold{M}_{xy}=\int_{\Omega}\Tilde{\boldsymbol{N}}_x^T\Tilde{\boldsymbol{N}}_yLw({u}^{k})\ dx dy =\sum_{q=1}^{Q}L \int_{\Omega_x}\Tilde{\boldsymbol{N}}_x^T w_x^{(q)} dx \int_{\Omega_y}\Tilde{\boldsymbol{N}}_yw_y^{(q)} dy {.}
\end{equation}
In this case, the full multidimensional integration in $\Omega$ is split into products of 1D integration, which can be very efficient even with limited parallel computing resources. 

Another approach that can be used for accelerating the integration is to consider a reduced domain $\Omega_r$ with $\Omega_r:=\{(x,y)\in \Omega\ | \  w(u^k)\neq 0 \}$.  {From the implementation point of view, this can be done by evaluating $w(u^k)$ at the Gauss points of each element. If they are close to zero, we can exclude the element from $\Omega_r$.} Then, the integration becomes
 \begin{equation}
  \label{eq:MxyRD}
\bold{M}_{xy}=\int_{\Omega}\Tilde{\boldsymbol{N}}_x^T\Tilde{\boldsymbol{N}}_yLw({u}^{k})\ dx dy = \int_{\Omega_r}\Tilde{\boldsymbol{N}}_x^T\Tilde{\boldsymbol{N}}_yLw({u}^{k})\ dx dy  {.}
\end{equation}
In 3D cases, the integration can be done in a  way similar to \eqref{eq:MxySep} and \eqref{eq:MxyRD}. 

 {• Adaptivity}

 An adaptive algorithm can be employed to further enhance the computational efficiency of CTD. As shown in the CTD algorithm, the iterative solution procedure is performed until a number of modes is found for certain accuracy. Hence the number of modes $M$ can be critical for the efficiency of the CTD solution.  If the total number of modes $M$ is relatively small,  the CTD algorithm  should be very efficient. Otherwise, the efficiency might be reduced. The following adaptive strategy can used for an  {better} efficiency of the proposed method.
\begin{enumerate}
    \item Start the CTD algorithm
    \item Check if the total number of modes $M$ reaches a critical value $M_c$. If $M\leq M_c$, continue the CTD algorithm. Otherwise, switch to FE or CFE for the current time step
    \item Return to Step 1 for the next time step
\end{enumerate}
The above adaptive algorithm leads to a hybrid CTD/FE (or CTD/CFE) computational strategy and can limit the number of modes for the CTD solution at each time step. This method should enable a  {better} computational efficiency and robustness  {than the pure CTD method when the number of modes becomes very large. This method} is called adaptive CTD (ACTD) in the next section.

We remark that the critical number of modes $M_c$ can also be  adaptively selected for different time steps. This will be investigated more in our future work.

\section{Numerical results}
\subsection{2D AC problem}
We start  {by} applying the proposed CTD method for solving 2D AC problems. \figurename~\ref{fig:domain} illustrates the computational domain and the randomly generated initial condition with $u^0\in [-0.5, 0.5]$. The coefficients in the AC equation \eqref{eq:AC} and the bulk free energy \eqref{eq:energyfunction} are chosen as below in \tablename~\ref{table:coef}. For both cases, the stabilizing coefficient is chosen as $\alpha=50$ for the semi-implicit time integration. We can verify that the criterion \eqref{eq:AC-discrete-semi-stable-step} is satisfied with any solution $u\in [-1,1]$. Therefore, the time step size can be freely chosen without violating the energy law. In this work, the step size is fixed as $\Delta t =0.01$ for $t\in [0,10]$, which results in 1000 time steps. A uniform mesh with element size $h=0.008$ is used first to study the accuracy of the CTD method in terms of solving the AC equation.

\begin{figure}[htbp]
\centering
{\includegraphics[scale=0.5]{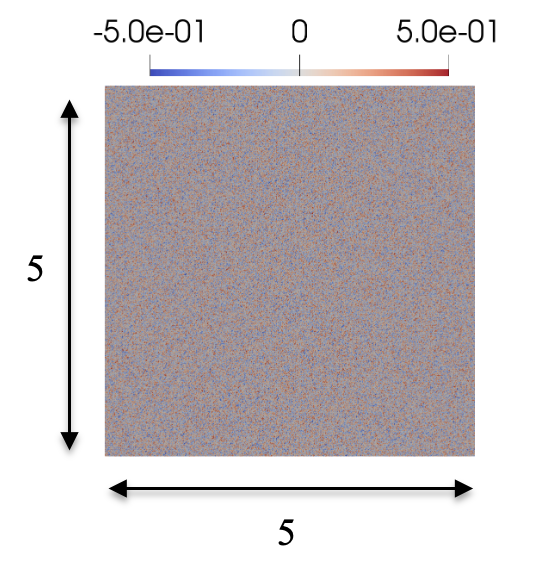}}
\caption{Computational domain and initial condition}
\label{fig:domain}
\end{figure}

\begin{table}[htbp]
\caption{Coefficients in the AC equation and the bulk free energy}
\centering
\begin{tabular}{|c|c|c|c|}
\hline
 Case $\#$ & $L$  & $\kappa$ & $a_0$ \\ \hline
Case 1 & 5 &  1  & 10 \\ \hline
Case 2 & 5  & 0.5 & 10  \\ \hline
\end{tabular}\\
\label{table:coef}
\end{table}

 \figurename~\ref{fig:u2D} illustrates the CTD solutions for the Case 1 problem. For comparison purposes, we also show the FE and CFE solutions under the same problem setup.  The controlling parameters ($s=1, p=1, a=12$) are used for both CTD and CFE for the convolution approximation. They should yield comparable results to linear FE but with better smoothness \cite{lu2023convolution}. The CFE solutions should be taken as the reference to investigate the accuracy of the CTD solutions, although FE provides qualitatively similar results in the current example. Therefore, by taking the difference between the CTD and CFE solutions, we study the point-wise full field accuracy for the CTD solutions. As shown in \figurename~\ref{fig:du2D}, most of the differences between CTD and the reference solutions concentrate at the transition interface between the two phases $-1$ and 1. The CTD solutions remain very accurate over the entire time range. 

\begin{figure}[htbp]
\centering
\subfigure[FE, $t=0.01$]{\includegraphics[scale=0.11]{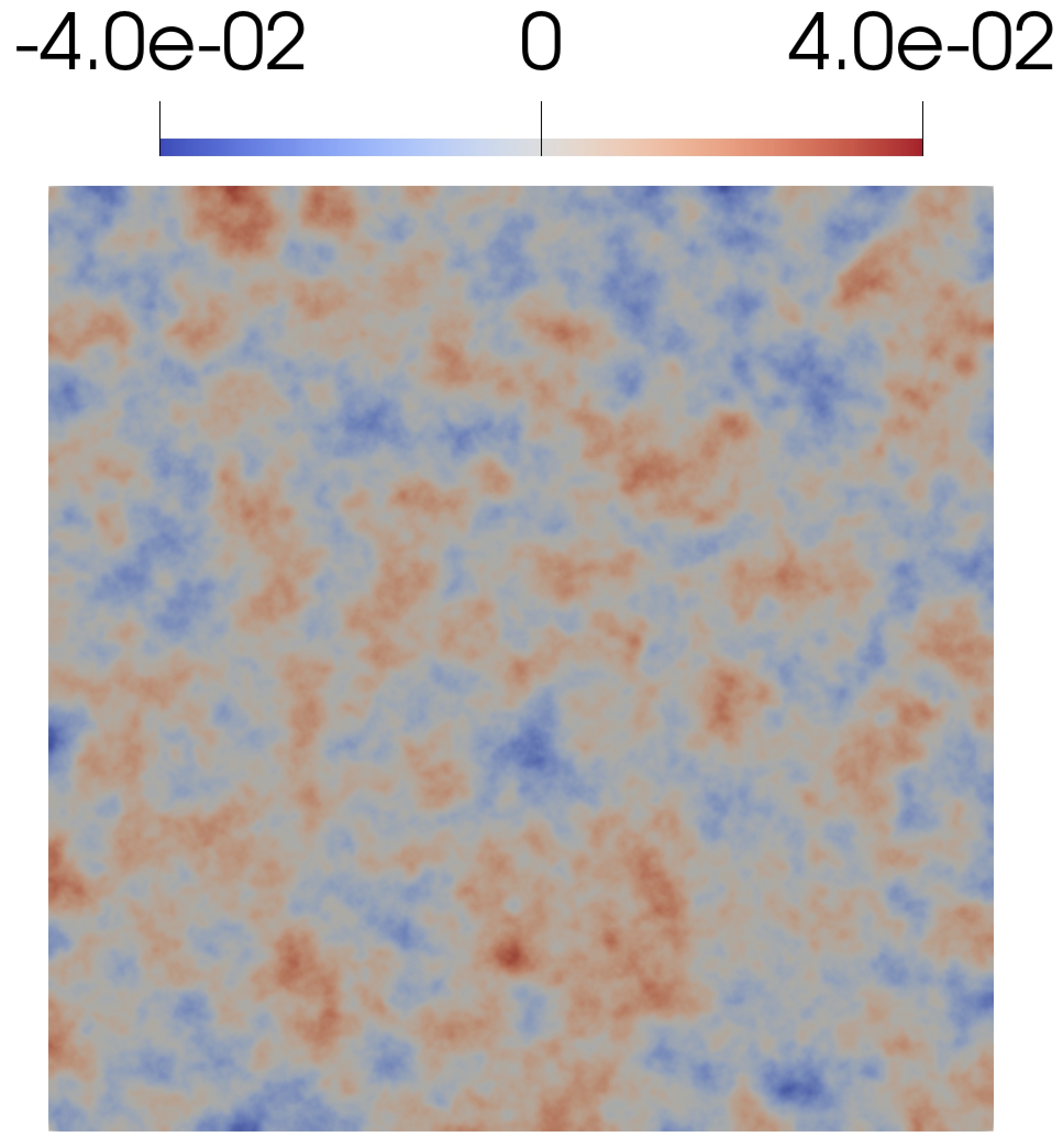}}
\subfigure[FE, $t=0.2$]{\includegraphics[scale=0.11]{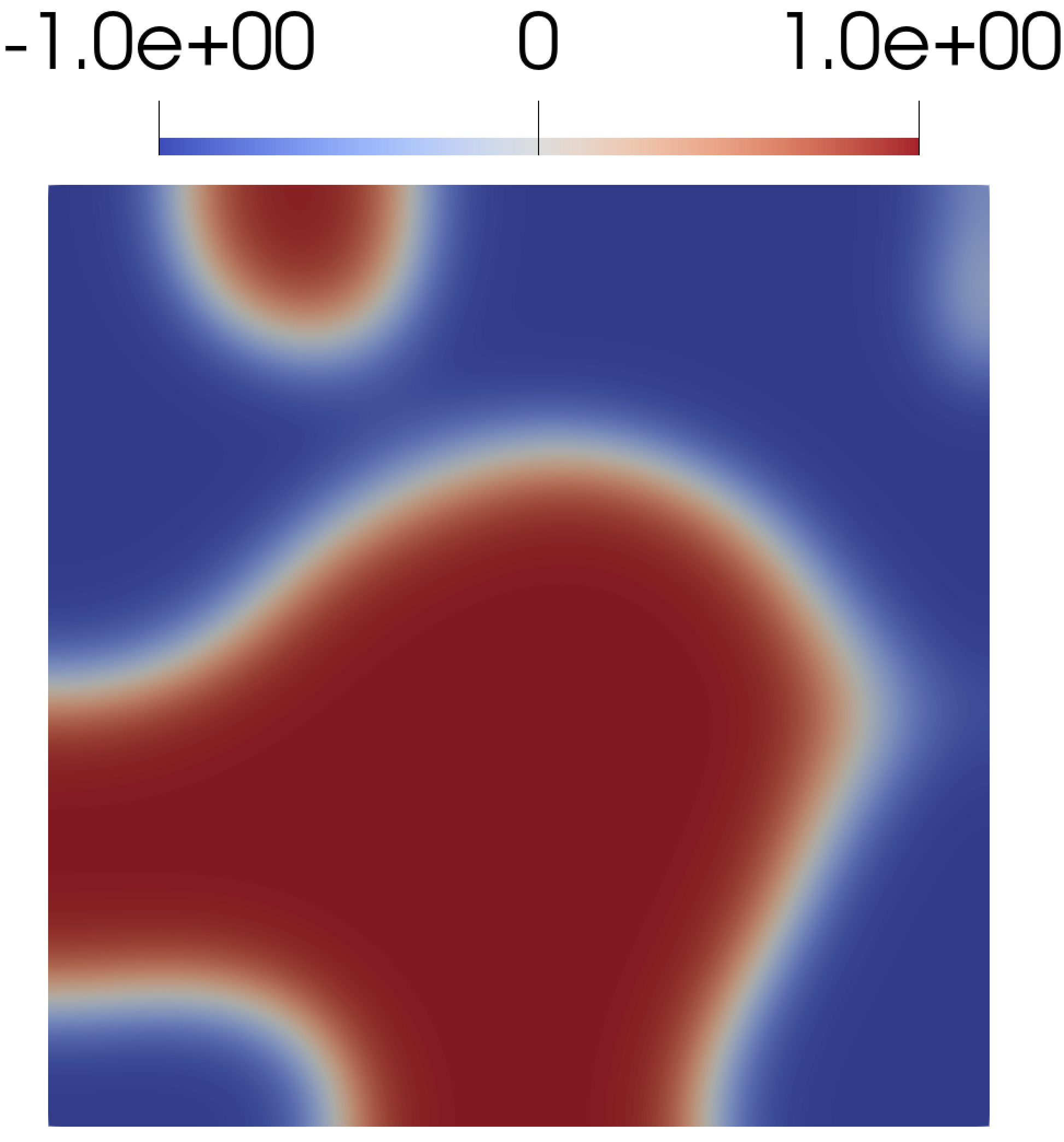}}
\subfigure[FE, $t=2$ ]{\includegraphics[scale=0.11]{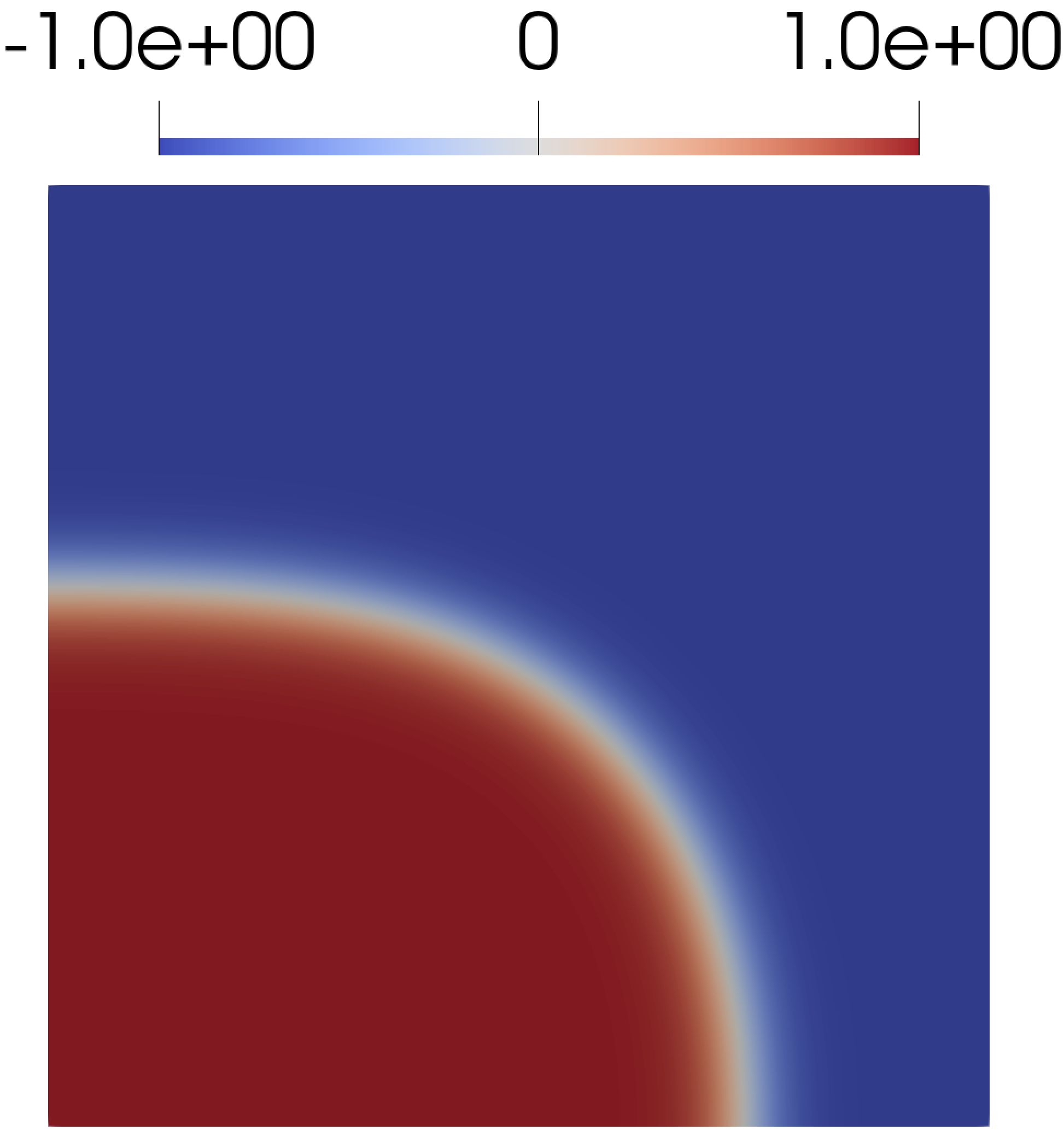}}
\subfigure[FE, $t=10$ ]{\includegraphics[scale=0.11]{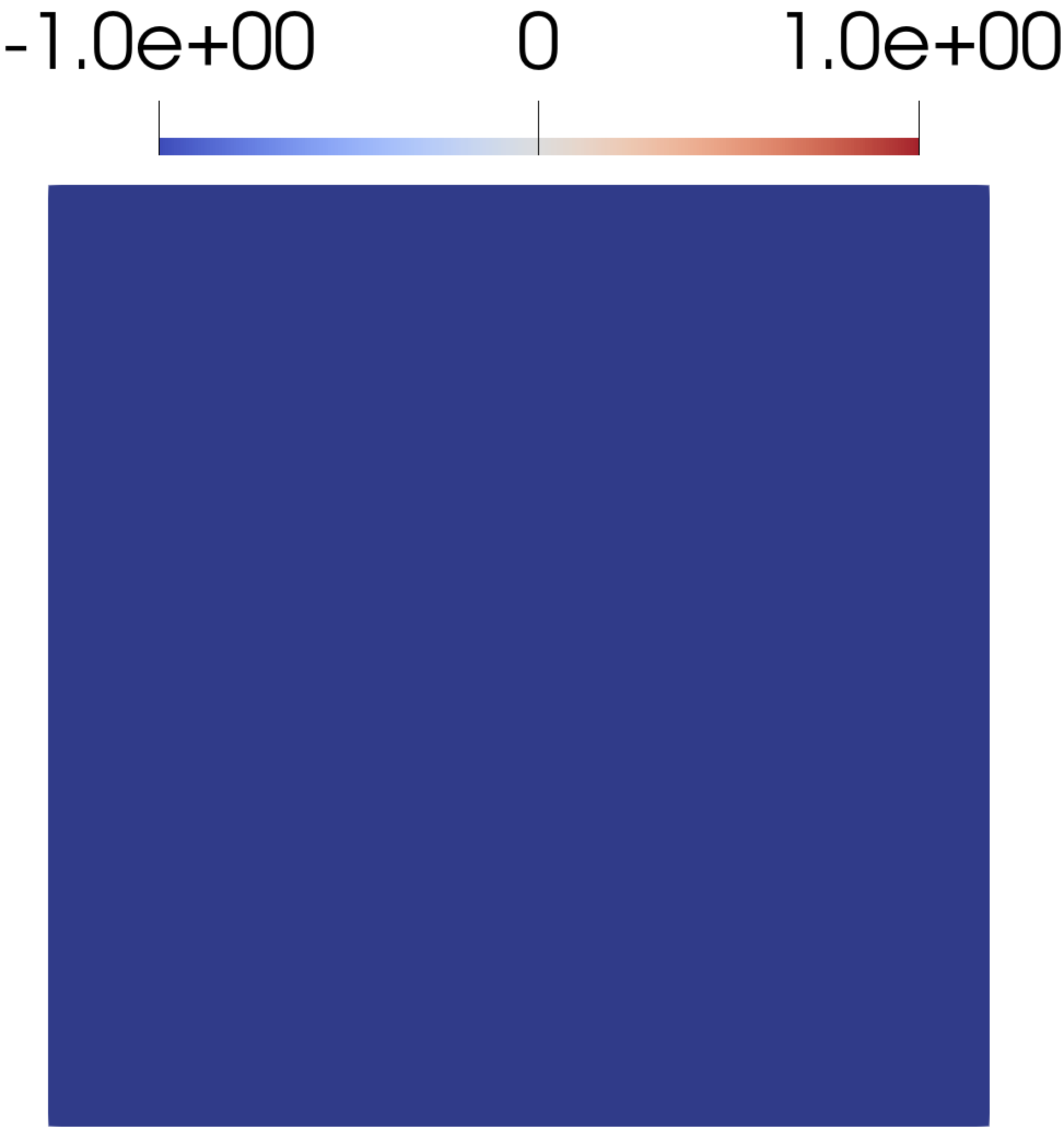}}\\
\subfigure[CFE, $t=0.01$]{\includegraphics[scale=0.11]{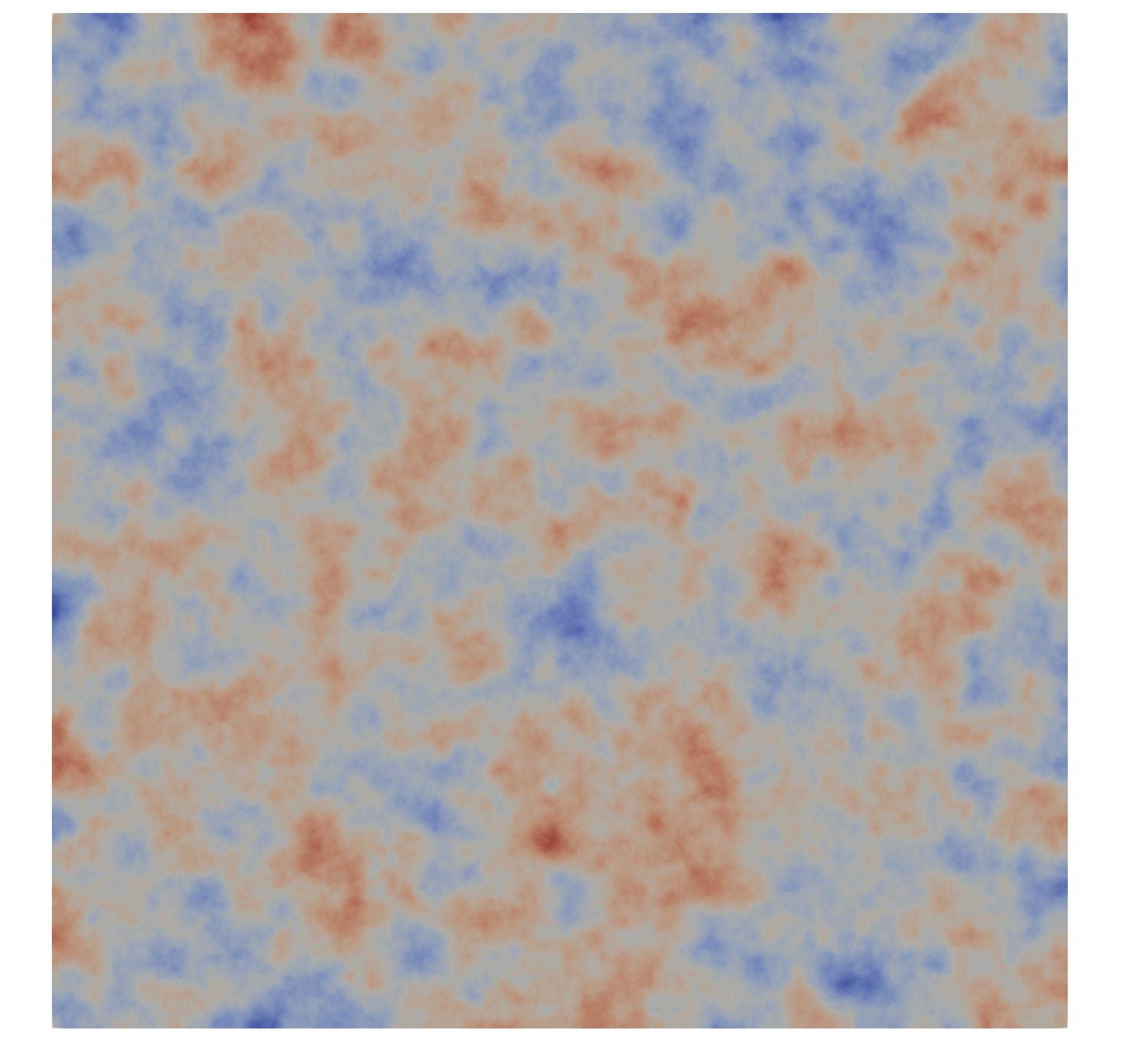}}
\subfigure[CFE, $t=0.2$]{\includegraphics[scale=0.11]{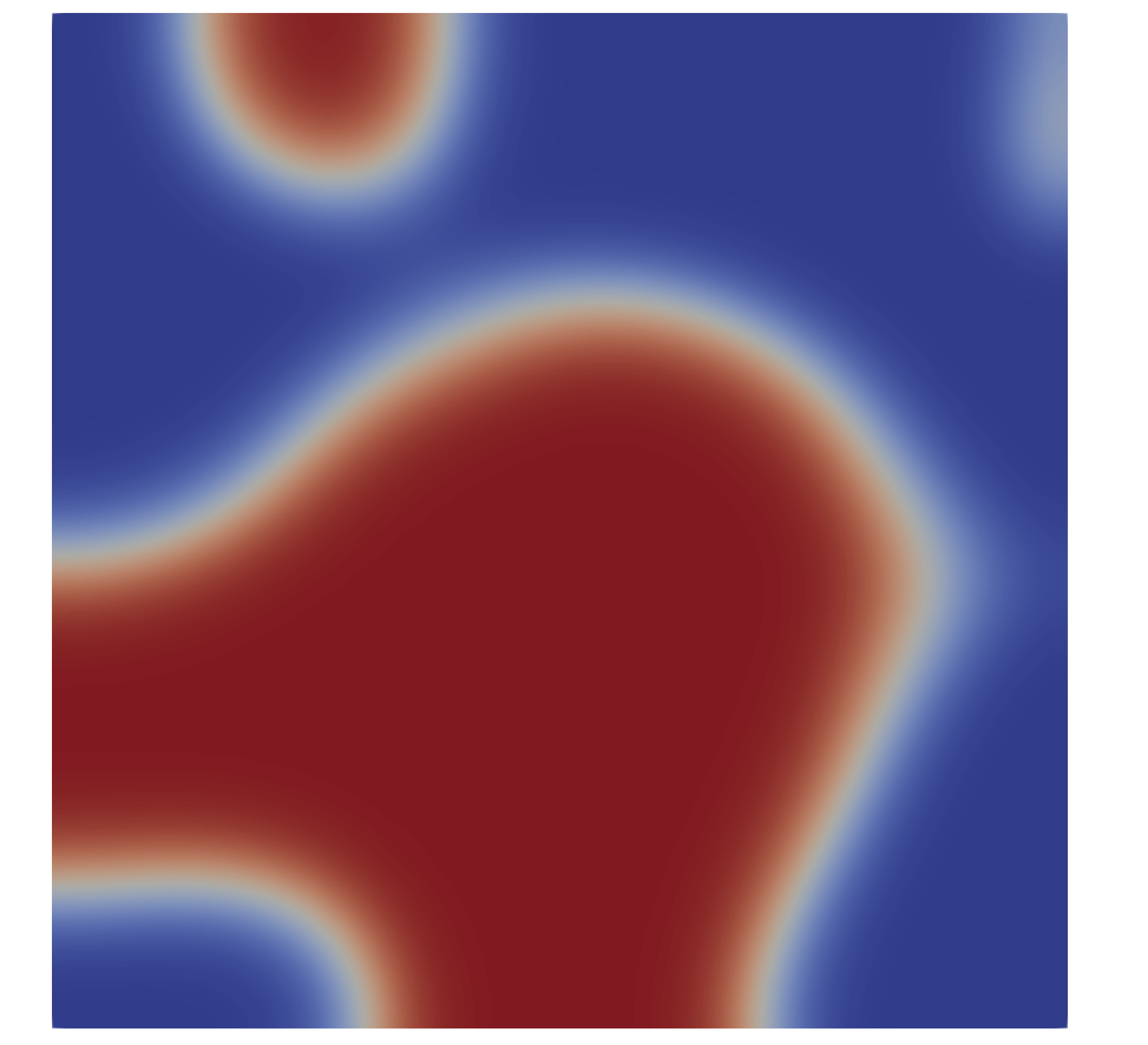}}
\subfigure[CFE, $t=2$ ]{\includegraphics[scale=0.11]{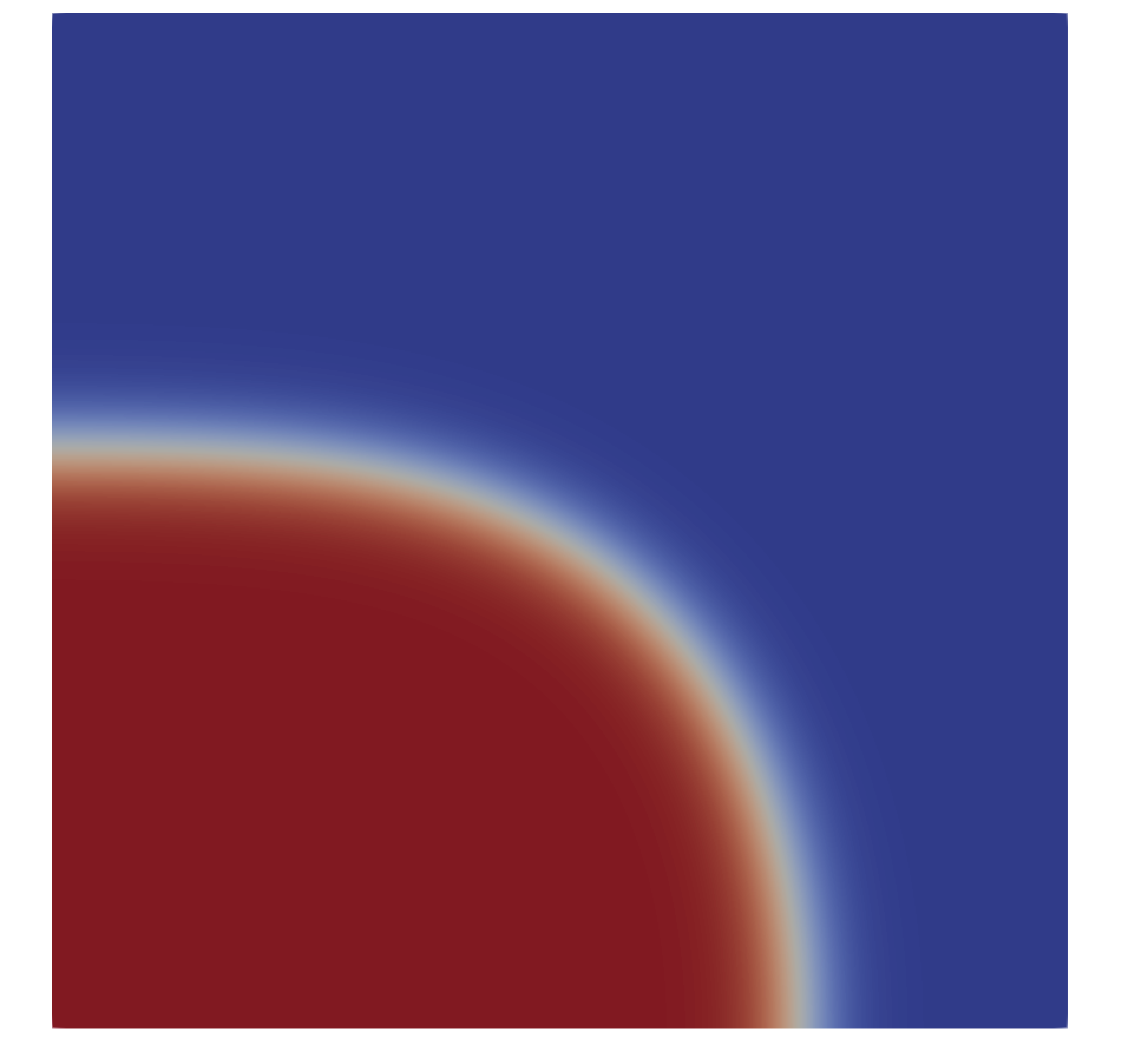}}
\subfigure[CFE, $t=10$ ]{\includegraphics[scale=0.11]{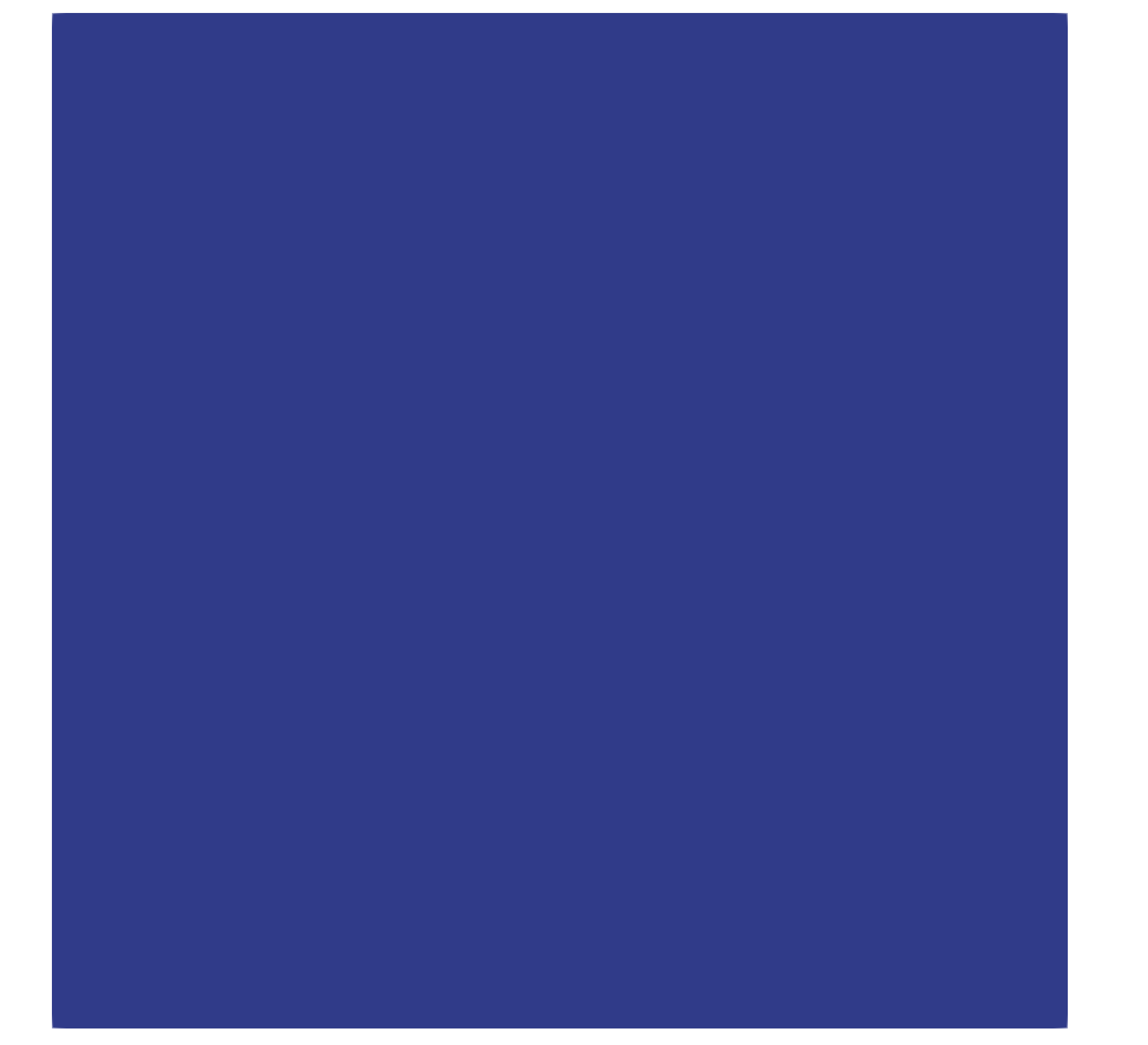}}\\
\subfigure[CTD, $t=0.01$]{\includegraphics[scale=0.11]{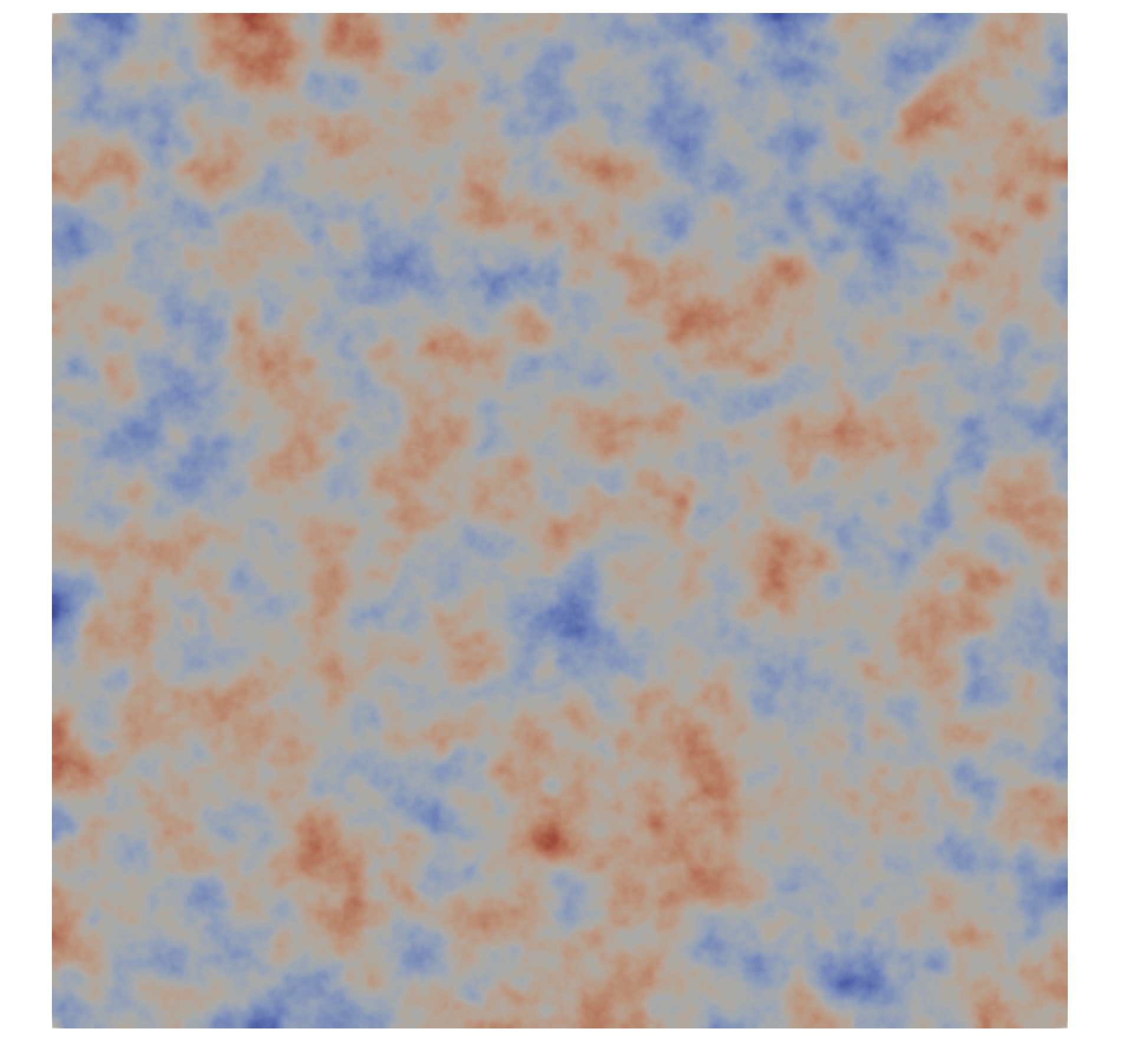}}
\subfigure[CTD, $t=0.2$]{\includegraphics[scale=0.11]{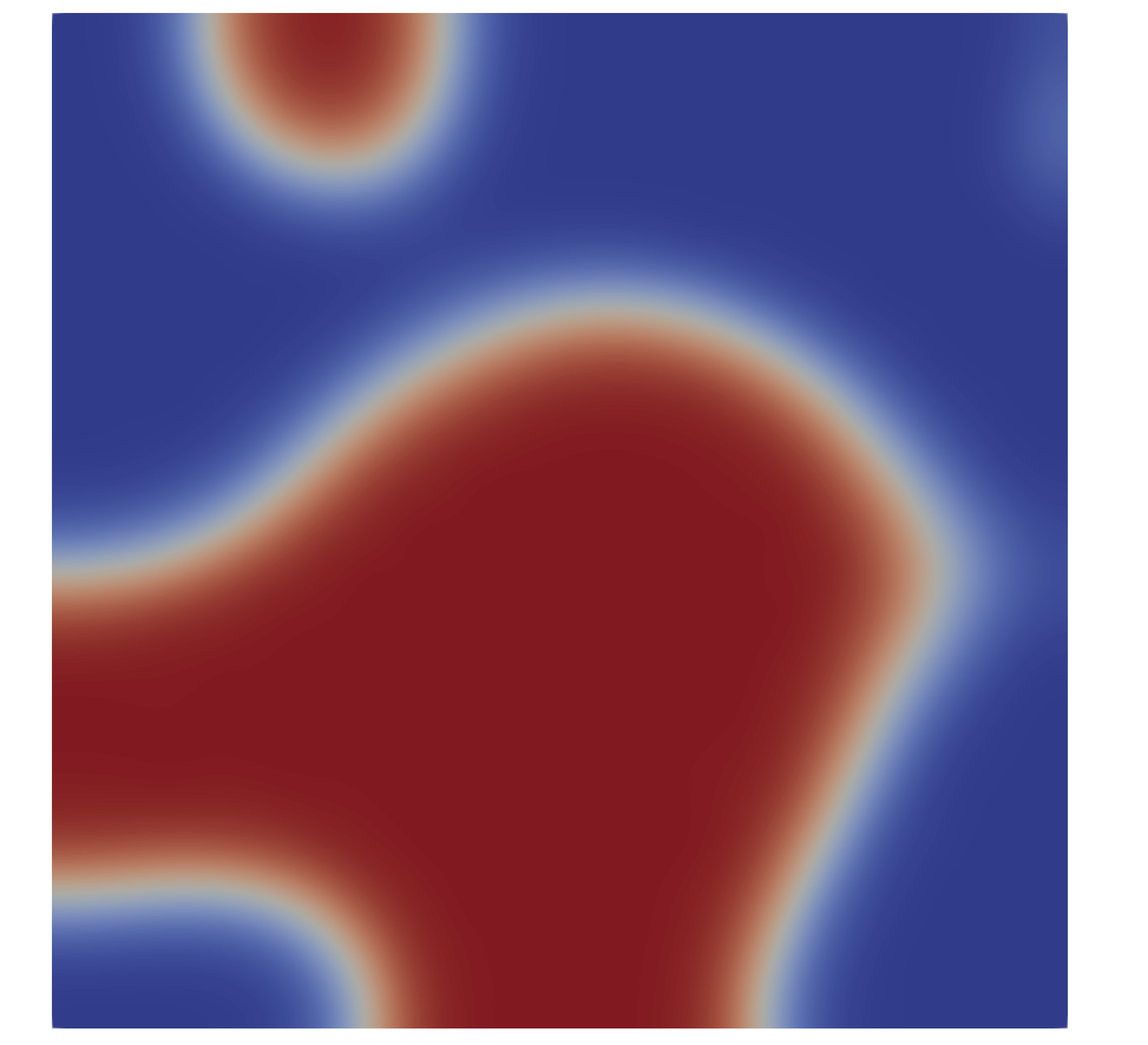}}
\subfigure[CTD, $t=2$ ]{\includegraphics[scale=0.11]{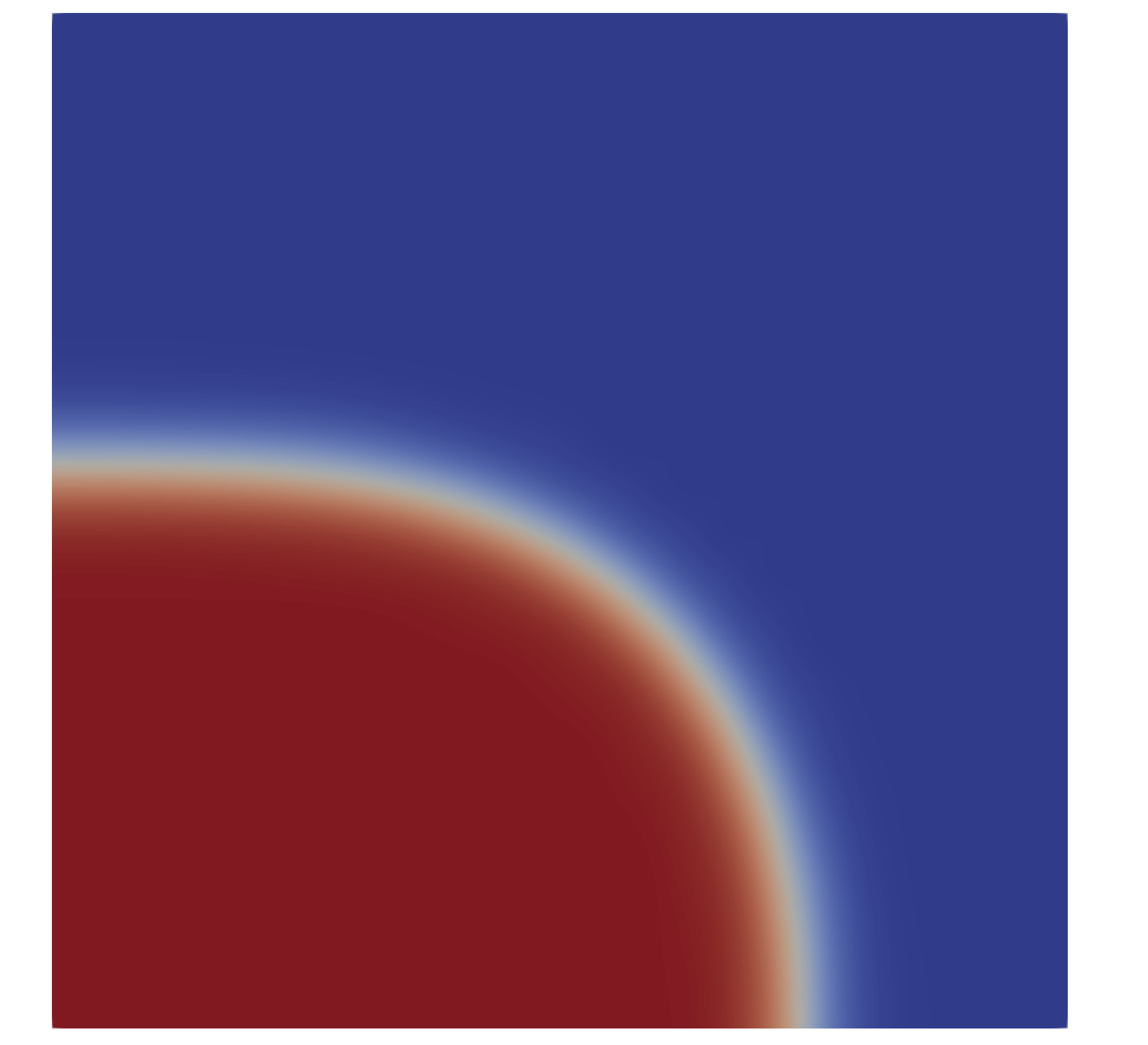}}
\subfigure[CTD, $t=10$ ]{\includegraphics[scale=0.11]{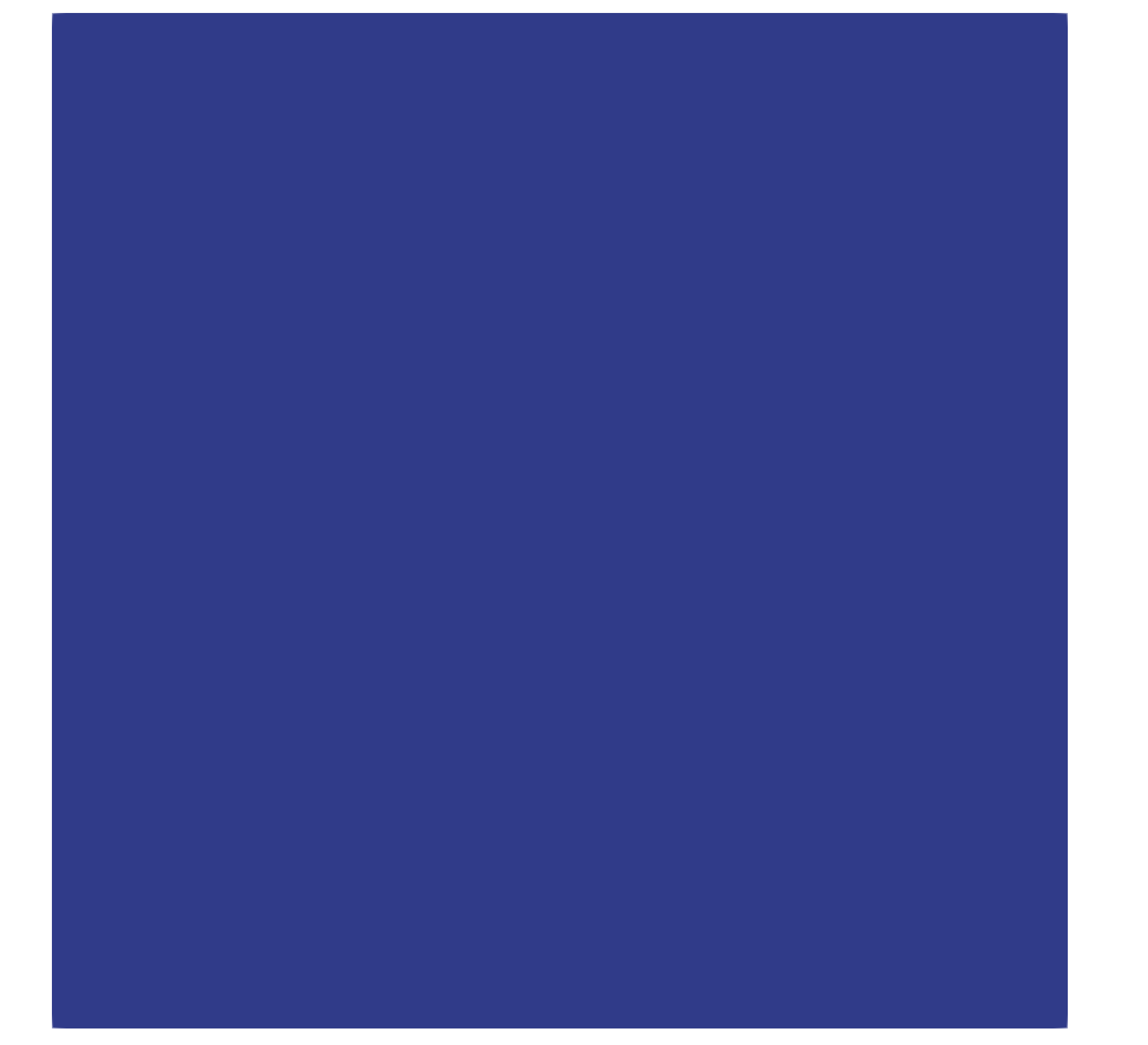}}
\caption{Case 1 - Solution snapshots for FE, CFE, and CTD at different time steps}
\label{fig:u2D}
\end{figure}

\begin{figure}[htbp]
\centering
\subfigure[$t=0.01$]{\includegraphics[scale=0.11]{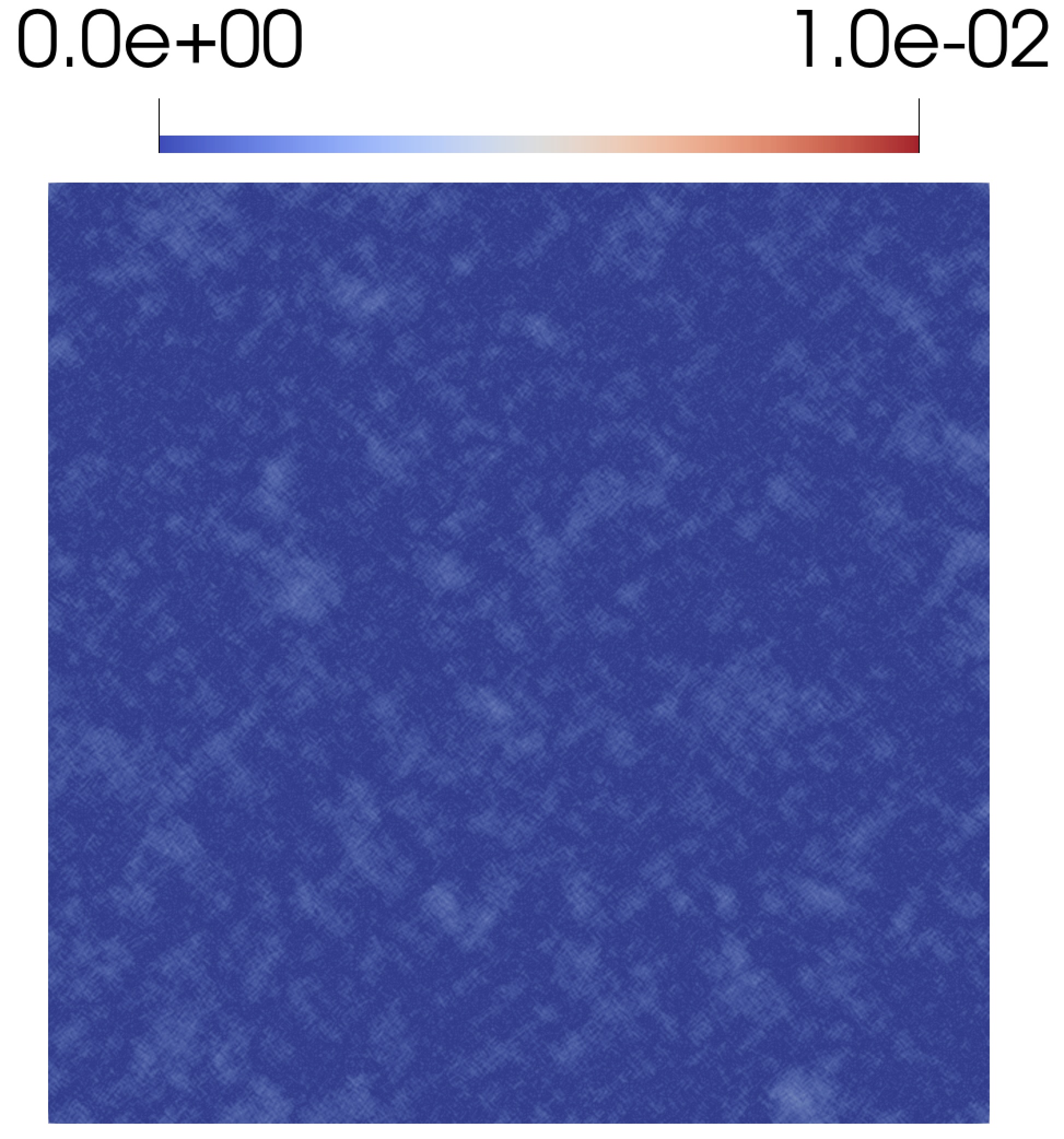}}
\subfigure[$t=0.2$]{\includegraphics[scale=0.11]{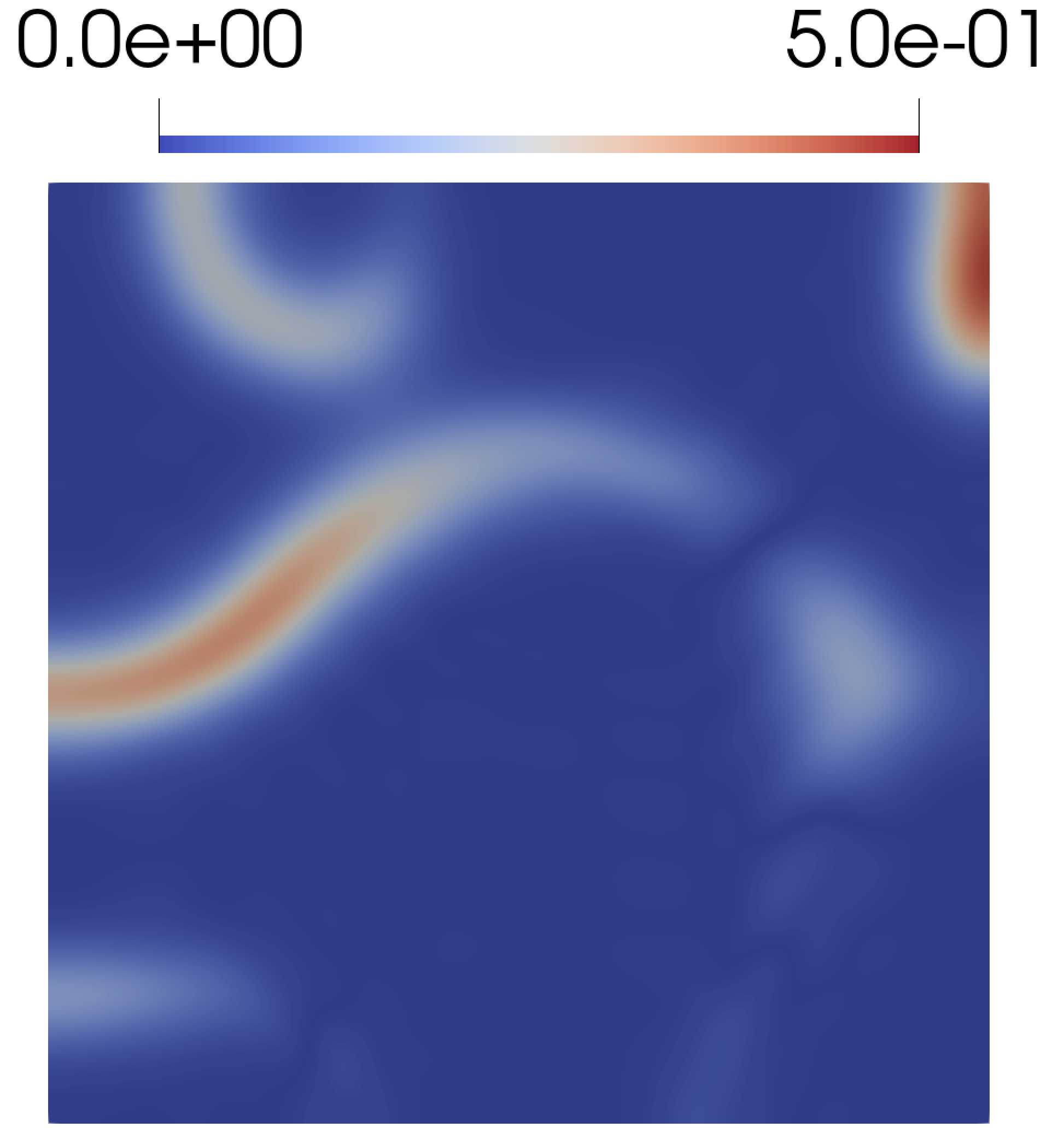}}
\subfigure[$t=2$ ]{\includegraphics[scale=0.11]{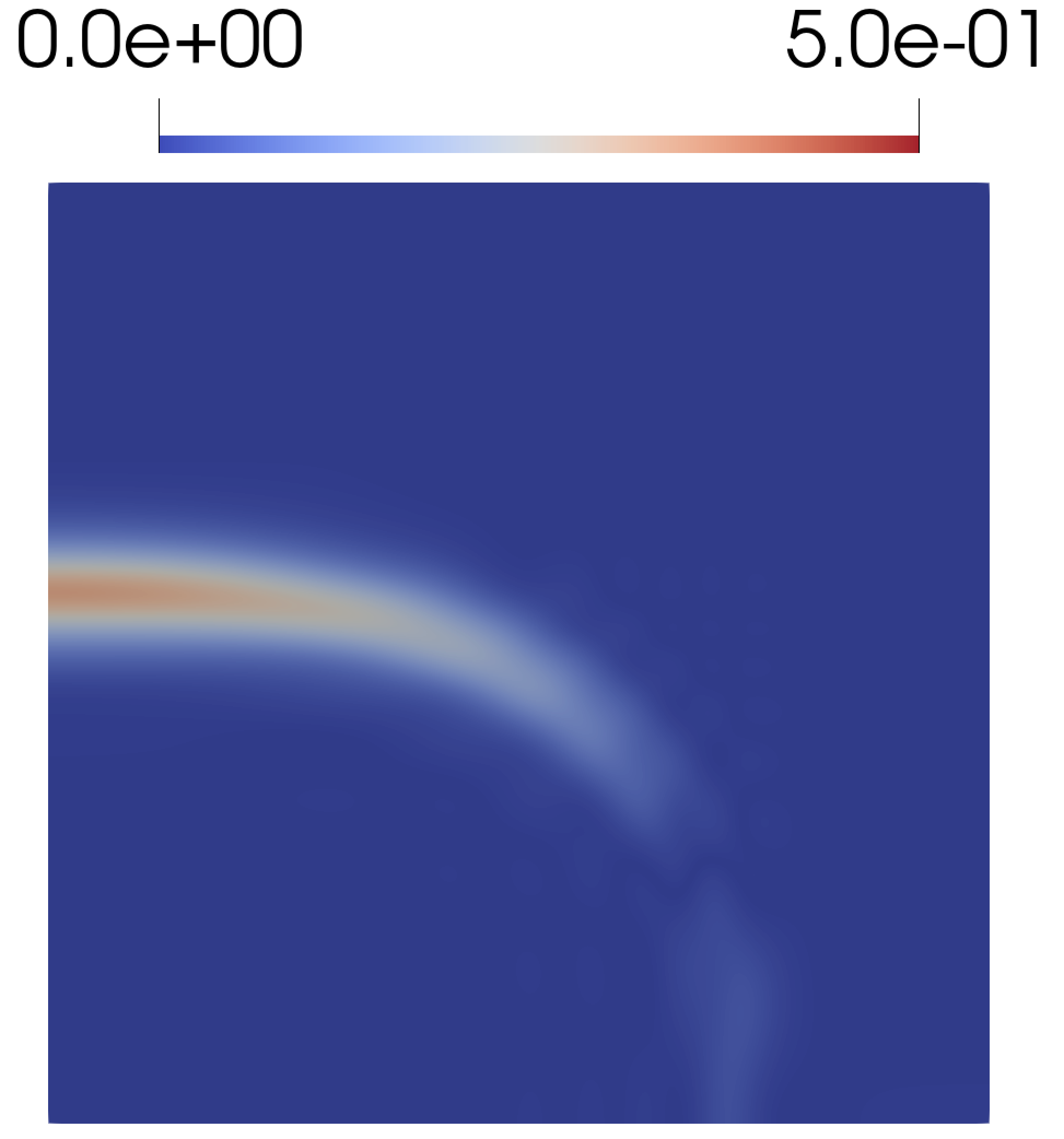}}
\subfigure[$t=10$ ]{\includegraphics[scale=0.11]{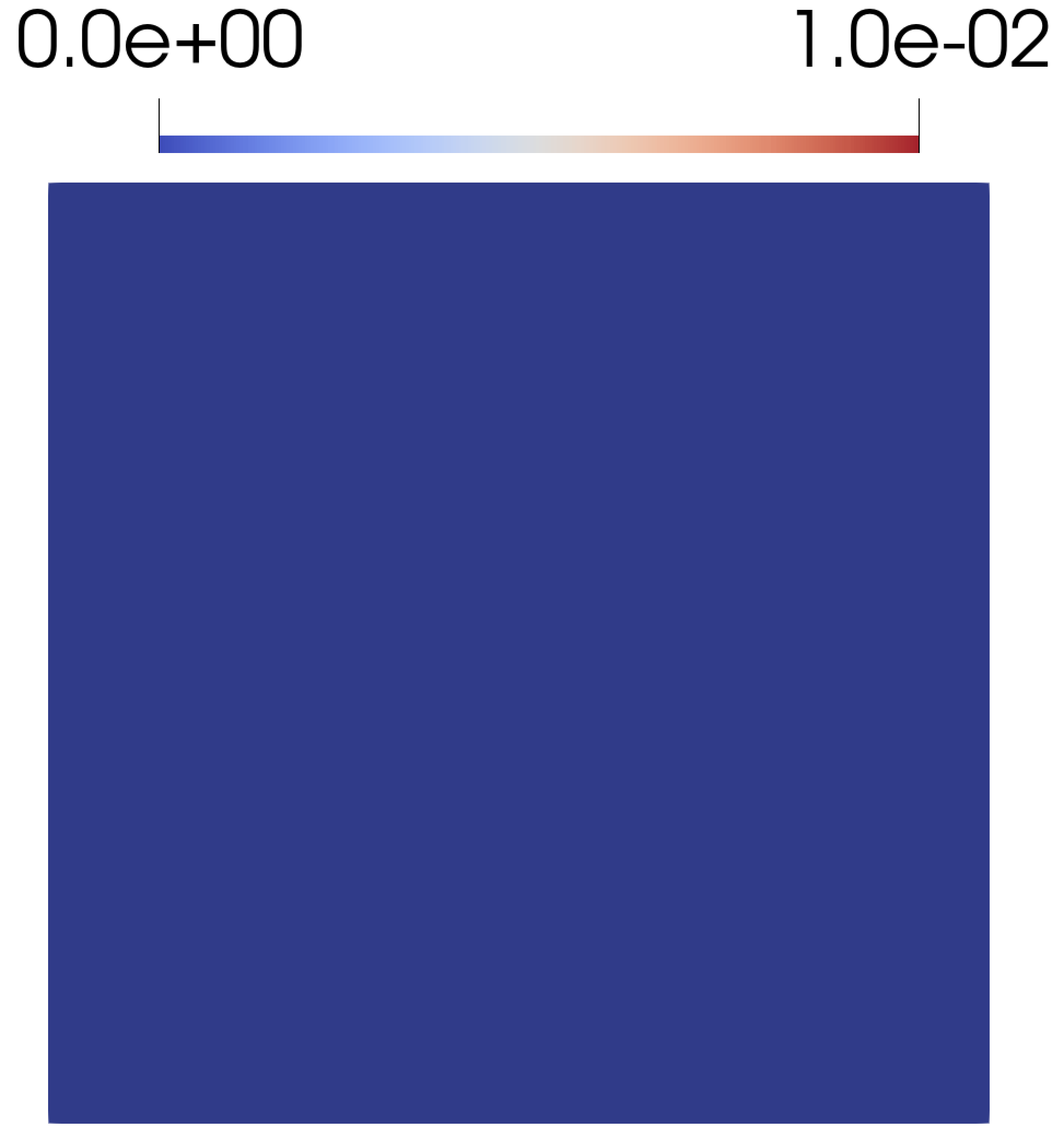}}
\caption{Case 1 - Difference between CTD and CFE, $|u^\text{CTD}-u^\text{CFE}|$}
\label{fig:du2D}
\end{figure}

Similarly, we study the accuracy of the CTD solutions for Case 2. The convolution parameters remain the same as previously for the CTD and CFE solutions. \figurename~\ref{fig:u2D_k2} illustrates the solutions at different time steps. Due to the smaller  gradient energy parameter $\kappa$, the solutions have a thinner transition interface between -1 and 1, leading to a phase evolution completely different from the previous one. The CTD method can capture well this change of solutions. Again, we illustrate the point-wise full field difference between CTD and CFE solutions, as shown in \figurename~\ref{fig:du2D_k2}. The accuracy of the CTD solutions is confirmed.

\begin{figure}[htbp]
\centering
\subfigure[FE, $t=0.01$]{\includegraphics[scale=0.11]{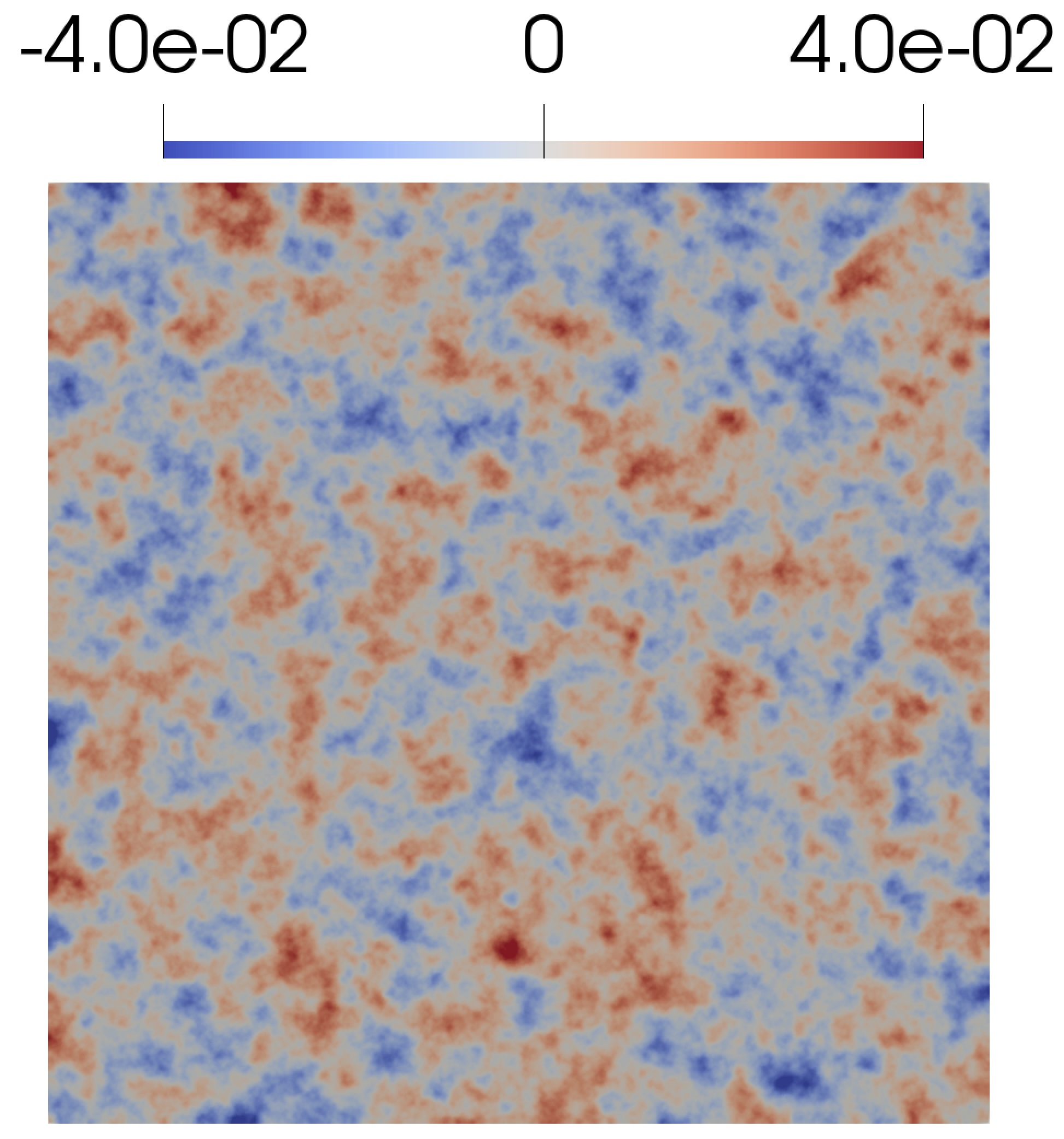}}
\subfigure[FE, $t=0.2$]{\includegraphics[scale=0.11]{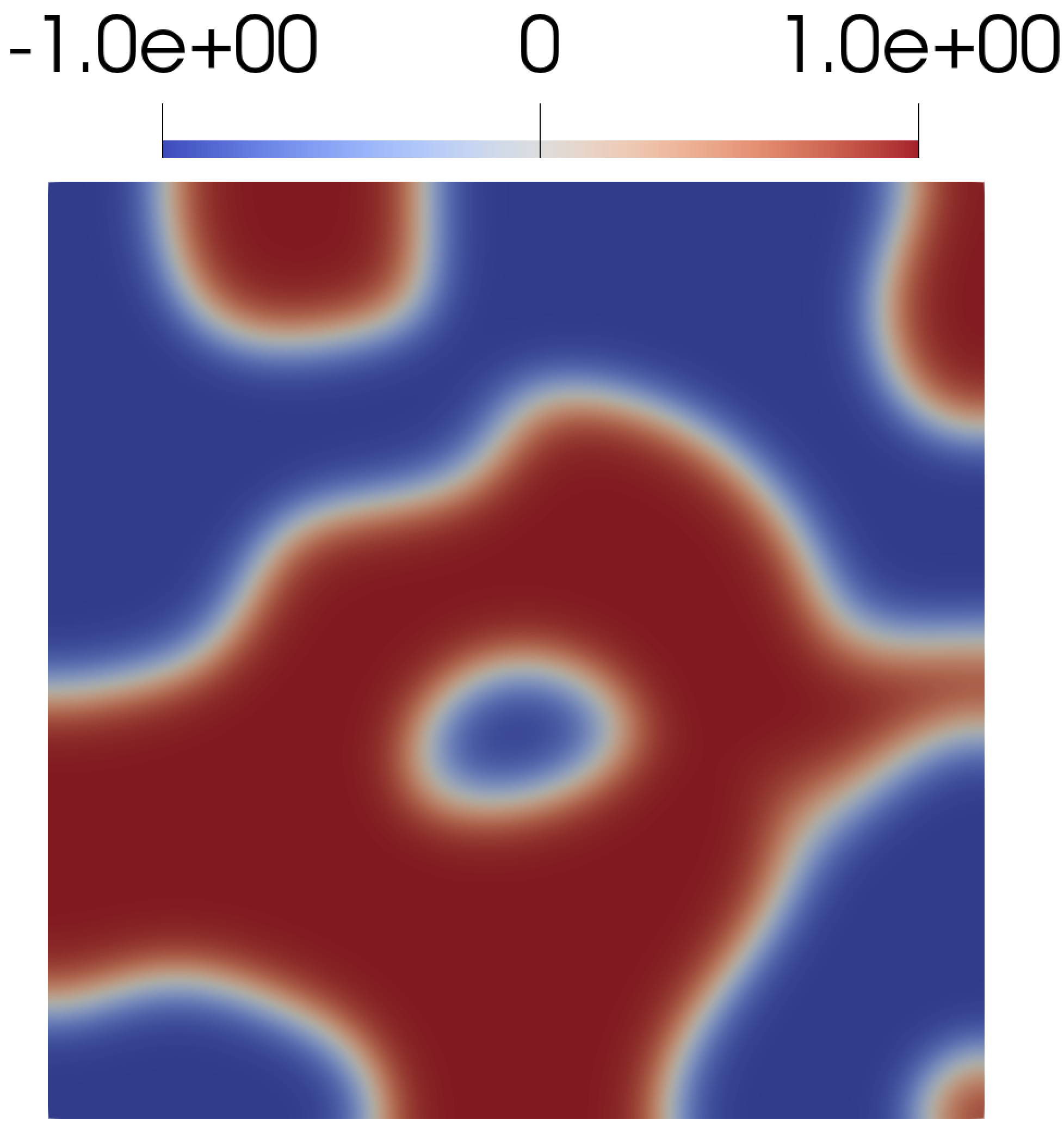}}
\subfigure[FE, $t=2$ ]{\includegraphics[scale=0.11]{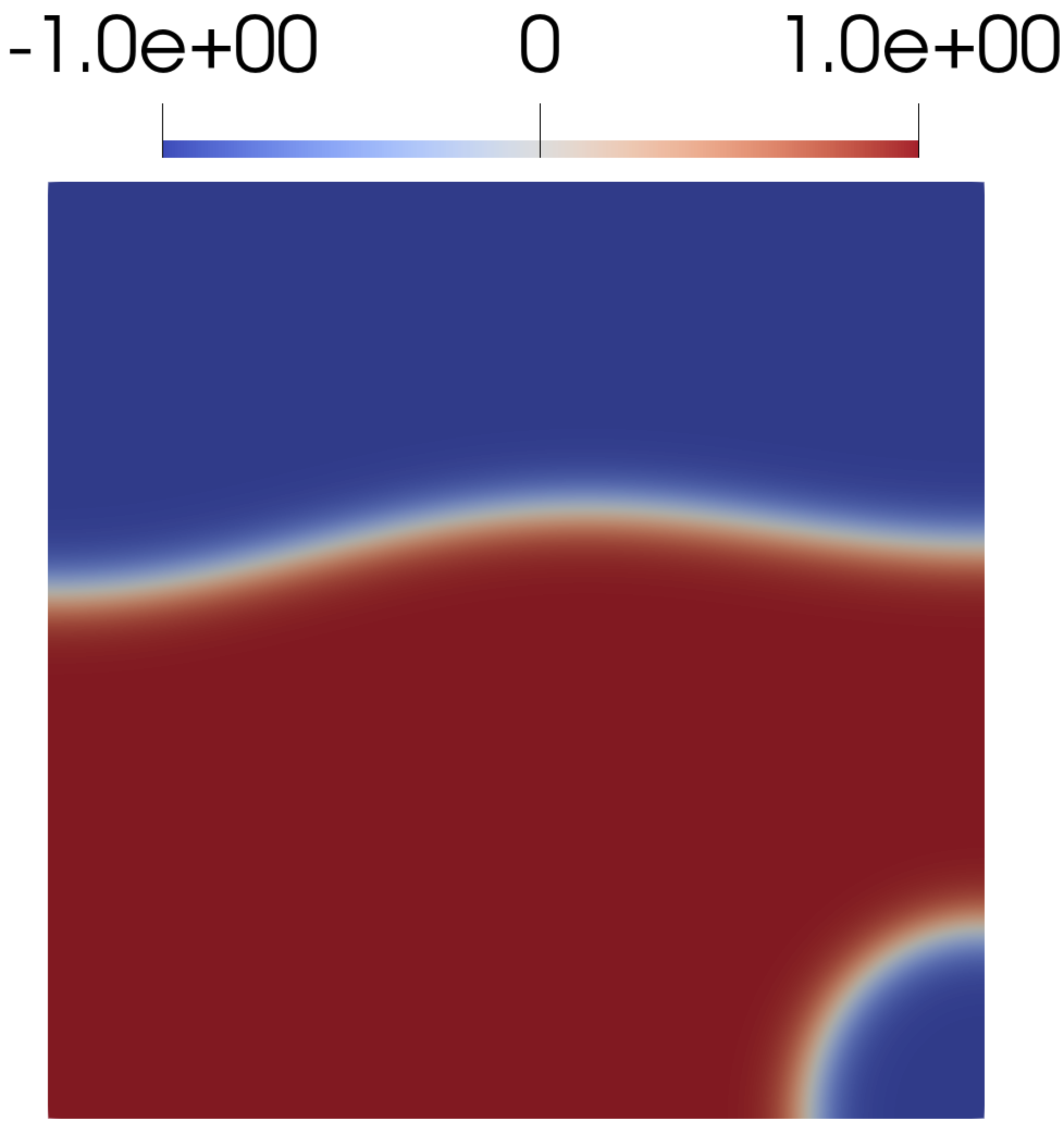}}
\subfigure[FE, $t=10$ ]{\includegraphics[scale=0.11]{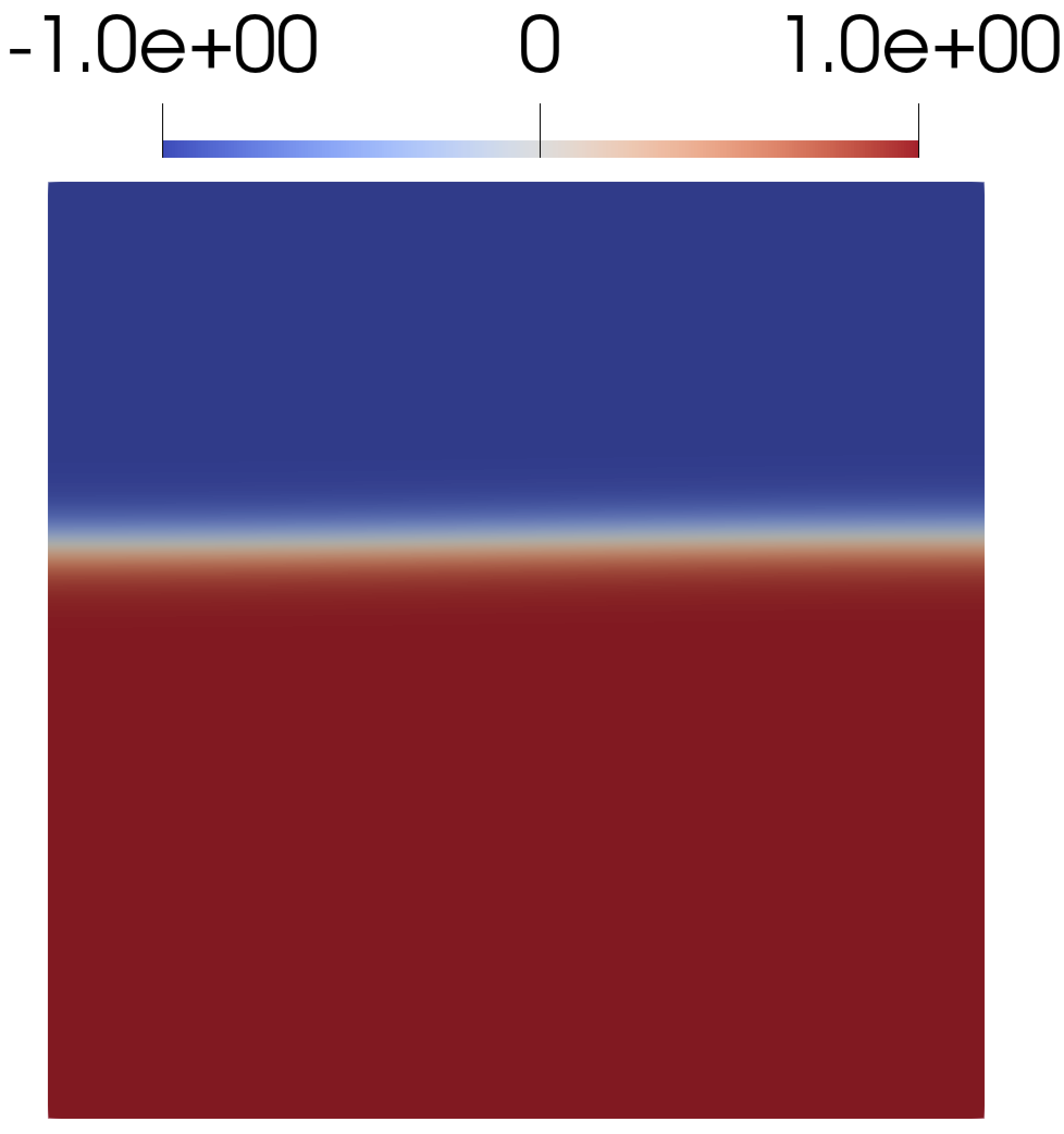}}\\
\subfigure[CFE, $t=0.01$]{\includegraphics[scale=0.11]{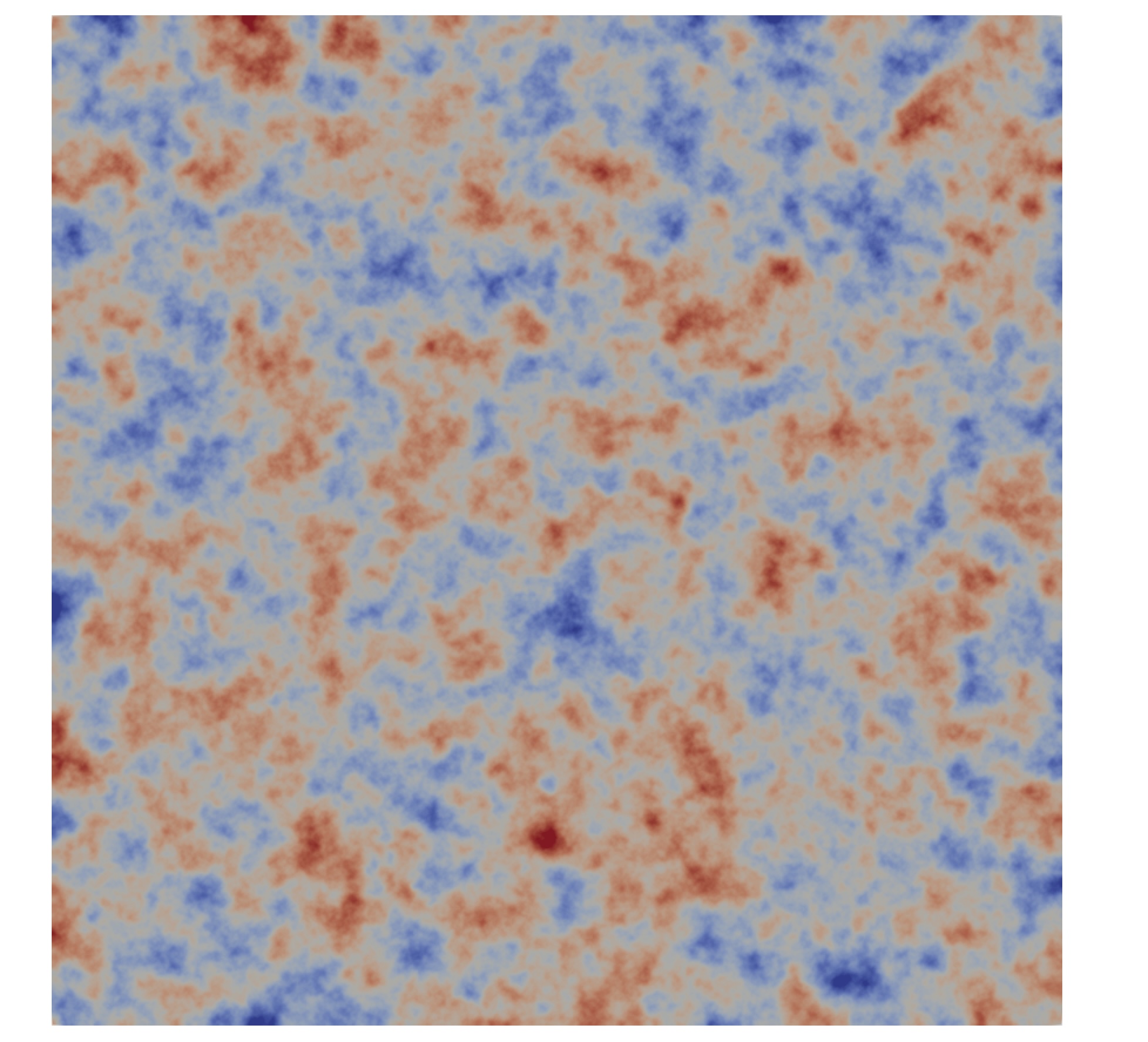}}
\subfigure[CFE, $t=0.2$]{\includegraphics[scale=0.11]{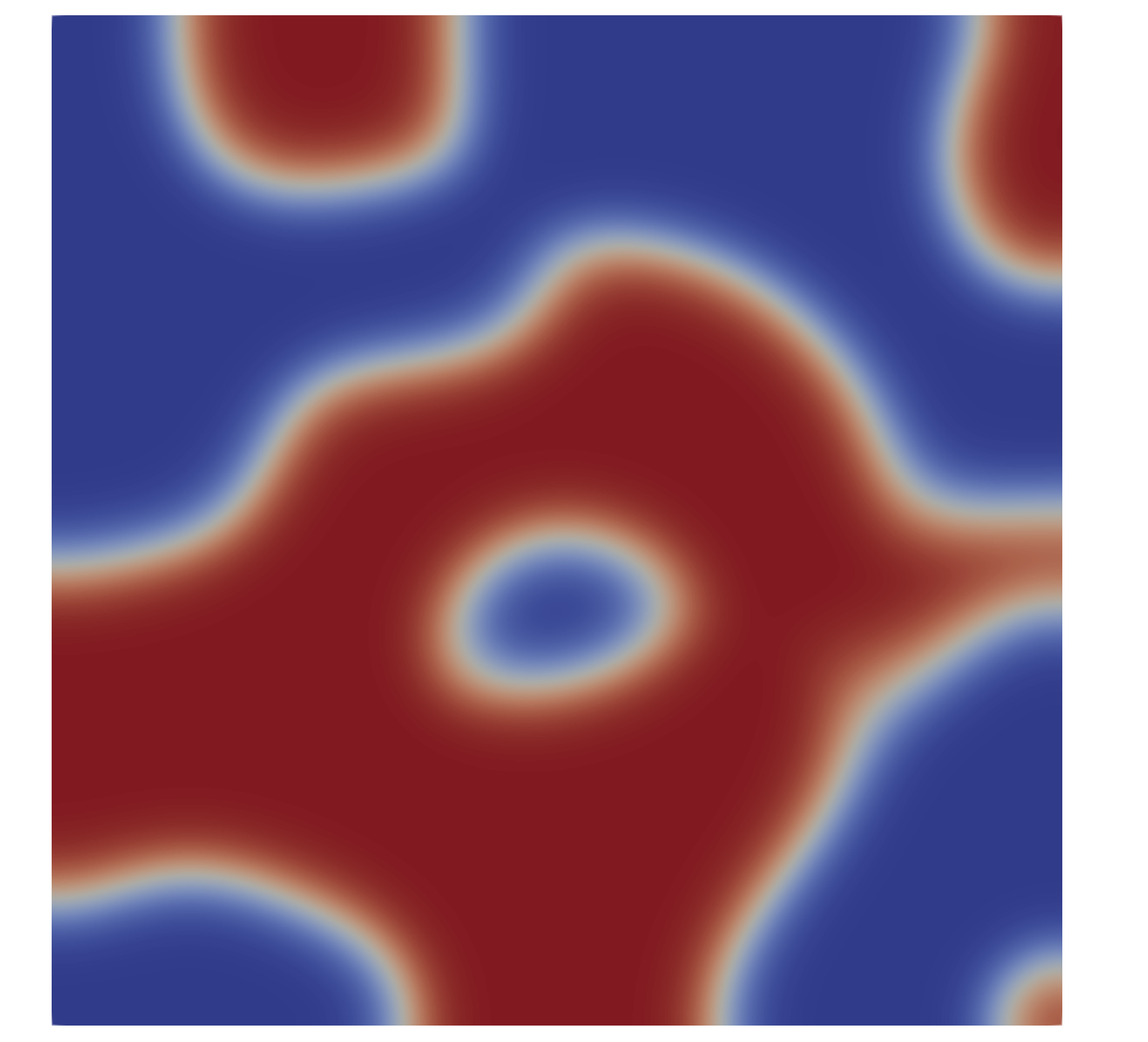}}
\subfigure[CFE, $t=2$ ]{\includegraphics[scale=0.11]{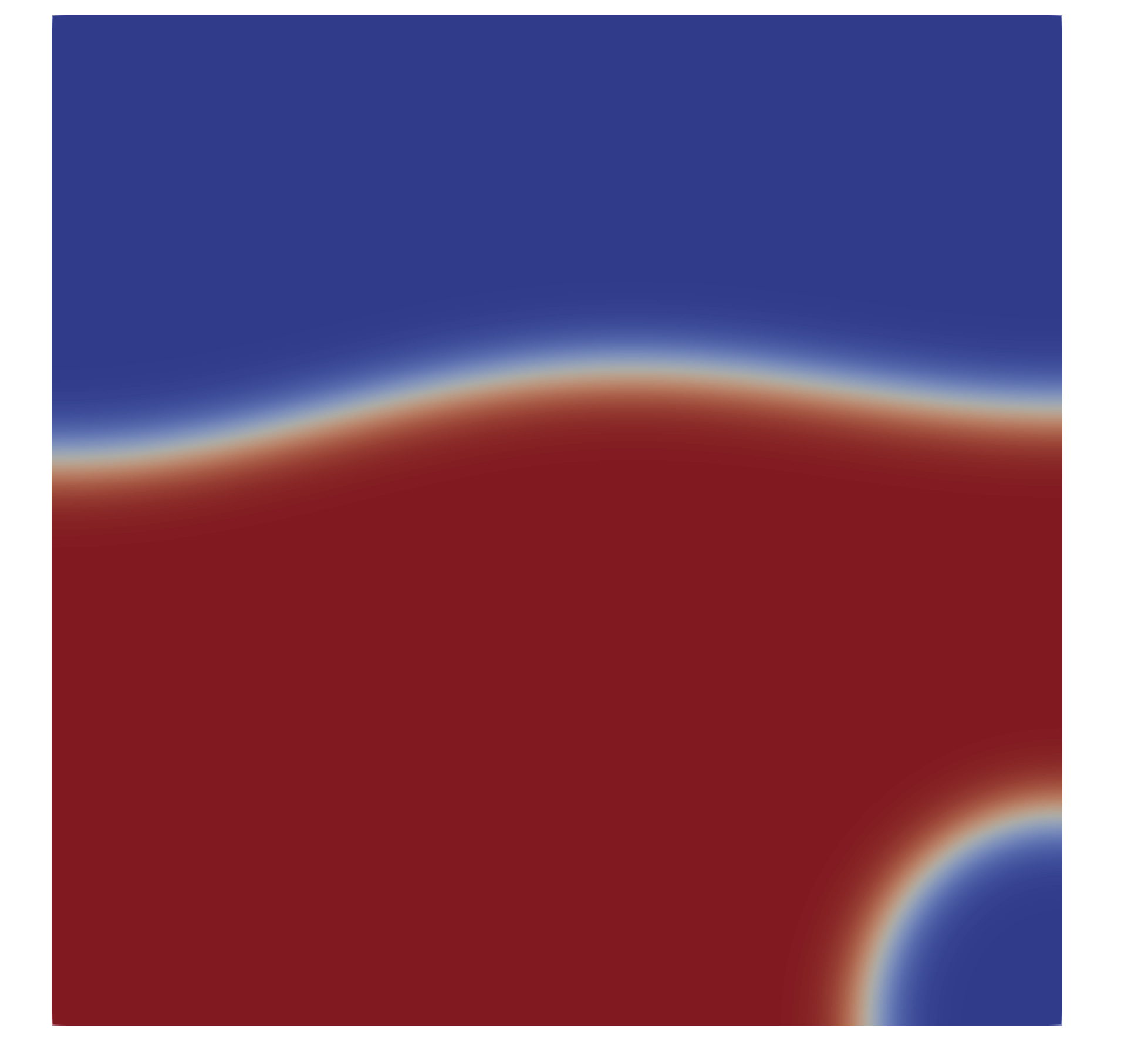}}
\subfigure[CFE, $t=10$ ]{\includegraphics[scale=0.11]{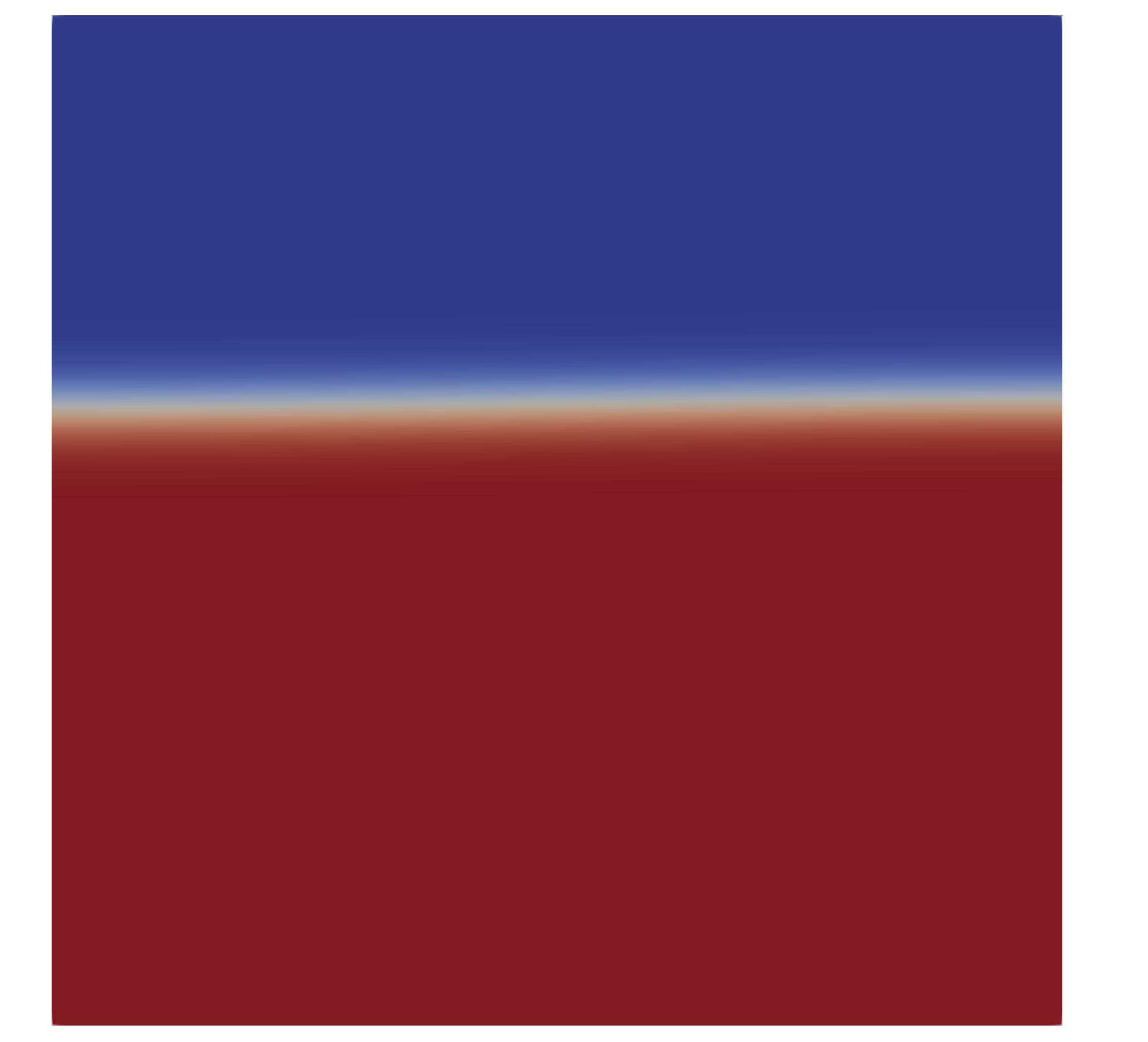}}\\
\subfigure[CTD, $t=0.01$]{\includegraphics[scale=0.11]{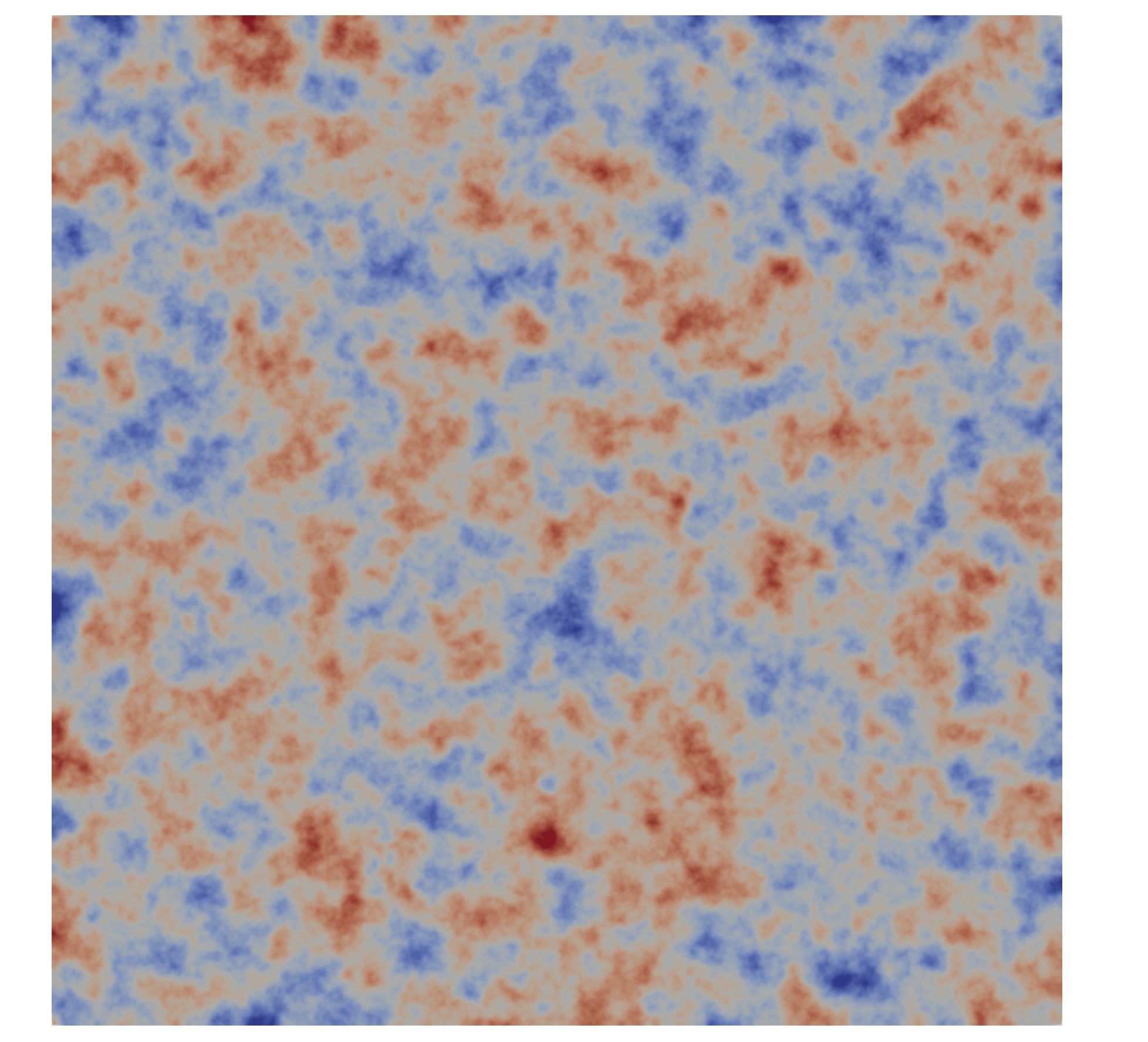}}
\subfigure[CTD, $t=0.2$]{\includegraphics[scale=0.11]{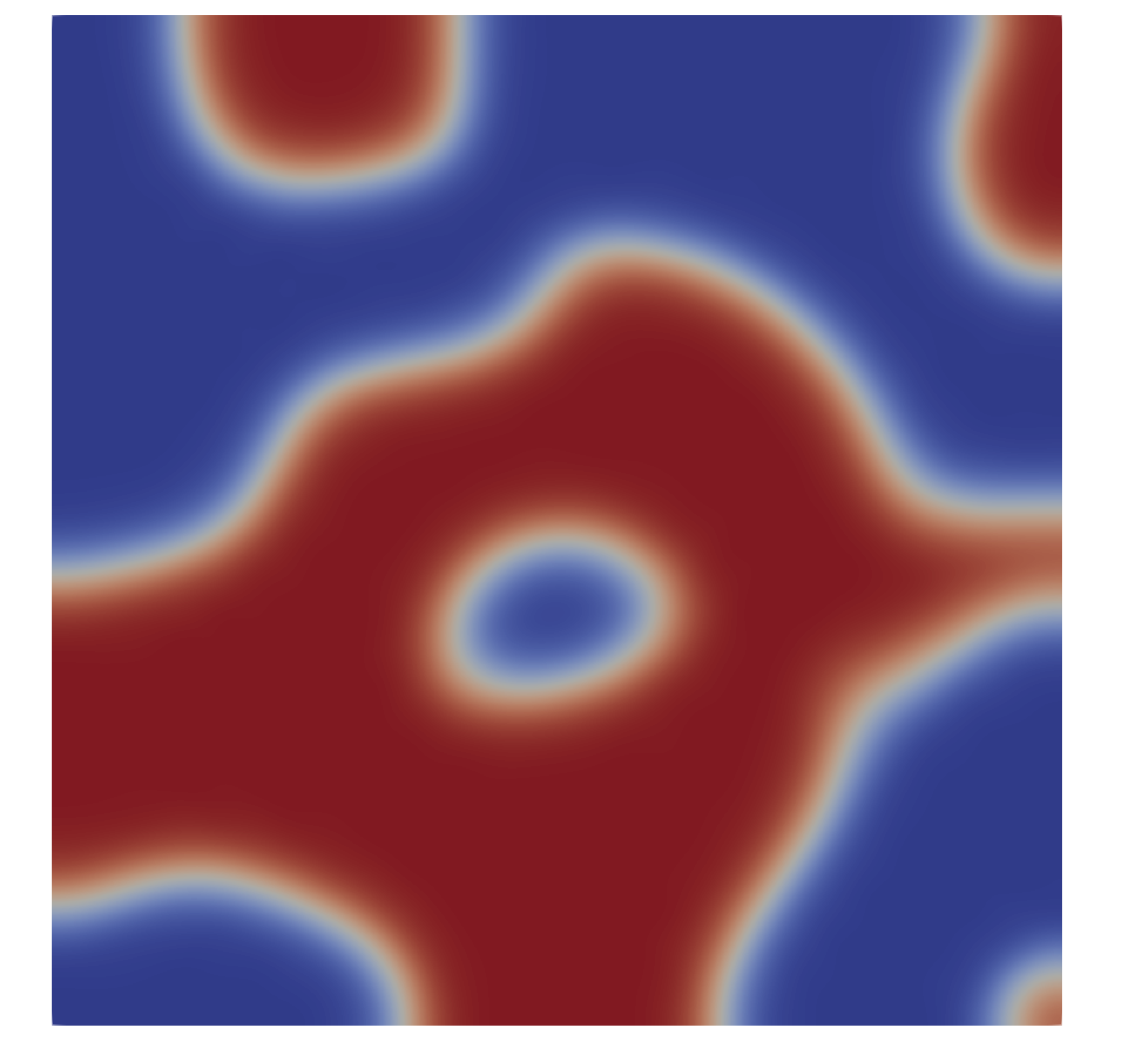}}
\subfigure[CTD, $t=2$ ]{\includegraphics[scale=0.11]{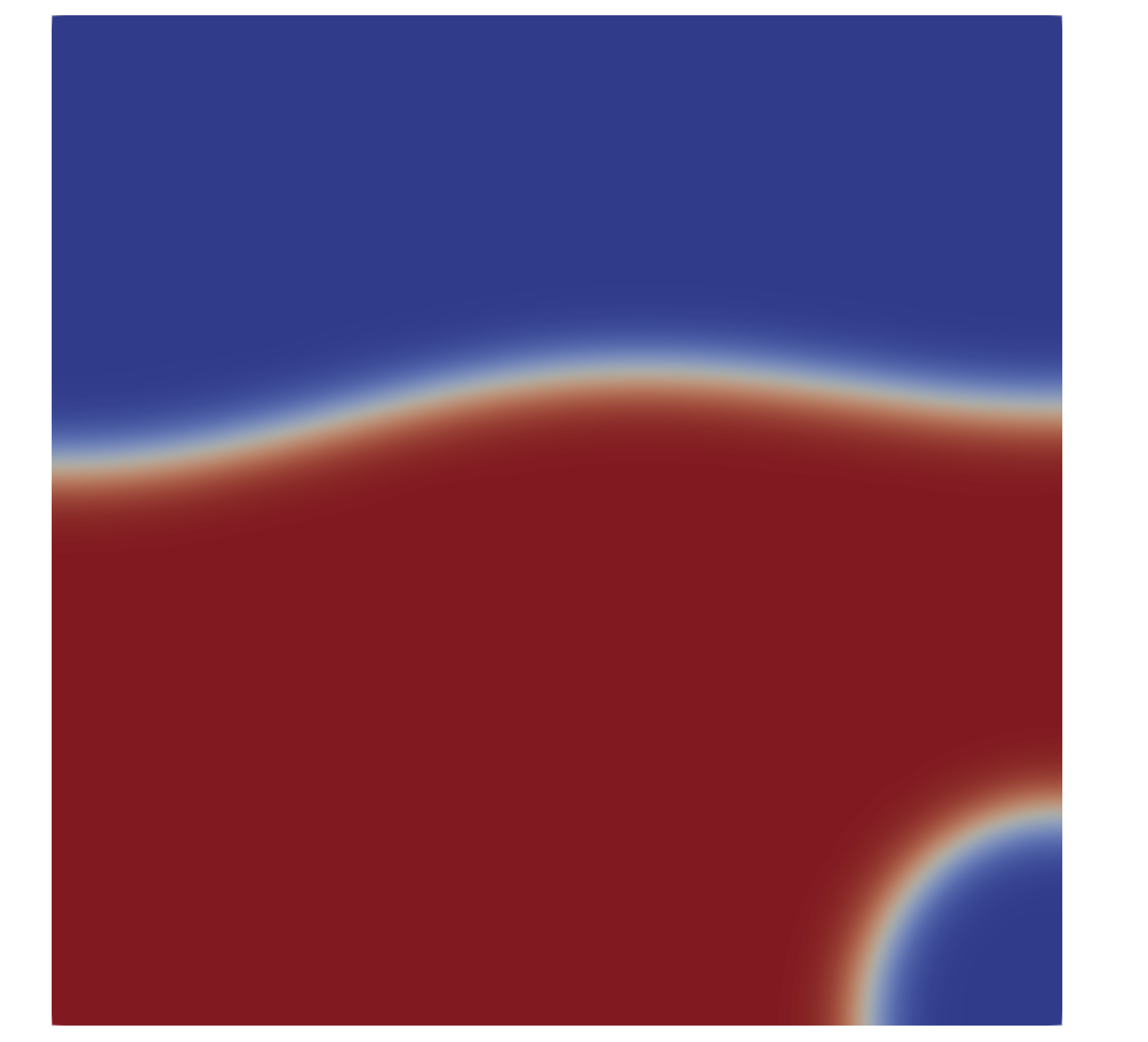}}
\subfigure[CTD, $t=10$ ]{\includegraphics[scale=0.11]{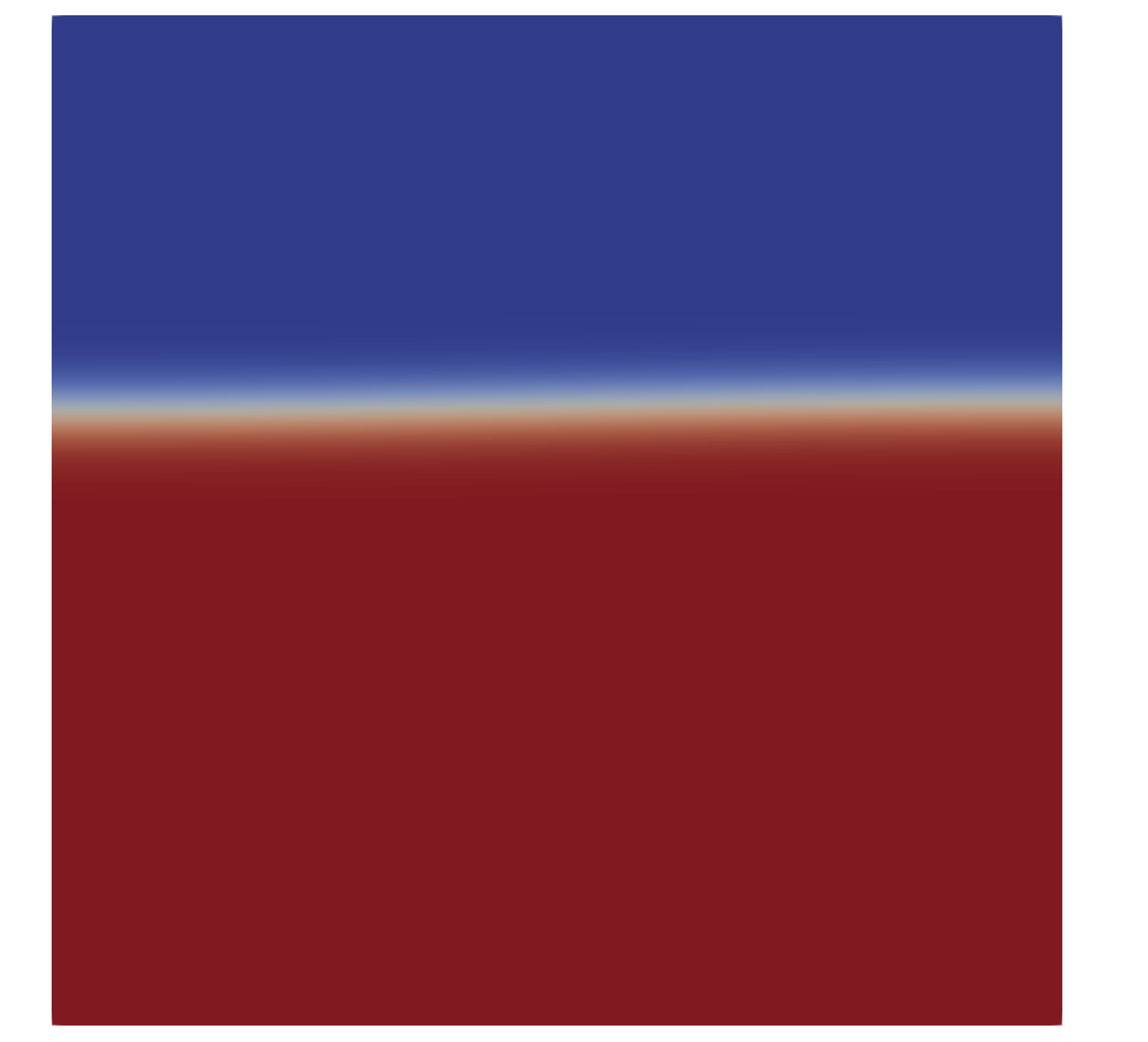}}
\caption{Case 2 - Solution snapshots of FE, CFE, and CTD at different time steps}
\label{fig:u2D_k2}
\end{figure}

\begin{figure}[htbp]
\centering
\subfigure[$t=0.01$]{\includegraphics[scale=0.11]{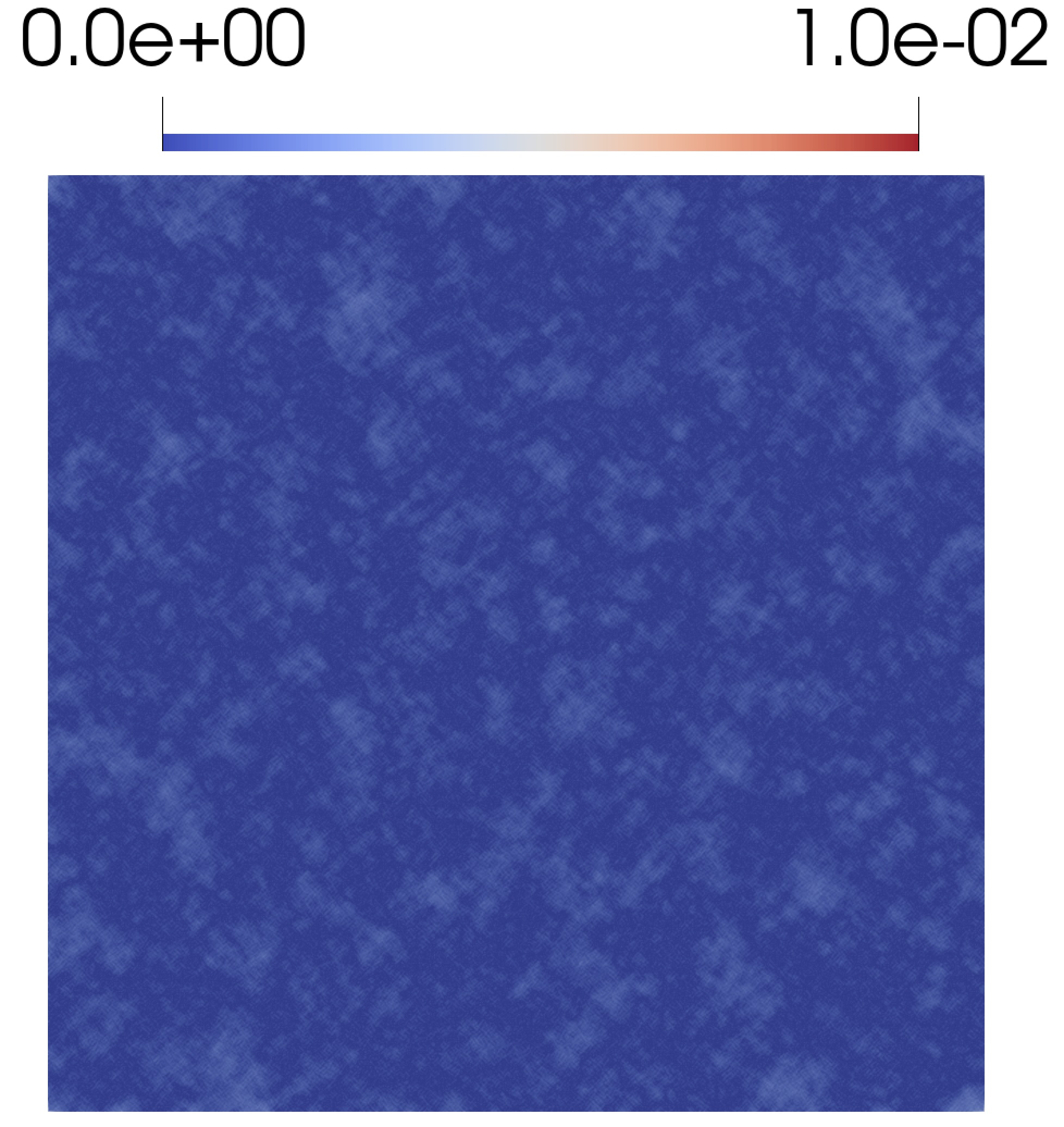}}
\subfigure[$t=0.2$]{\includegraphics[scale=0.11]{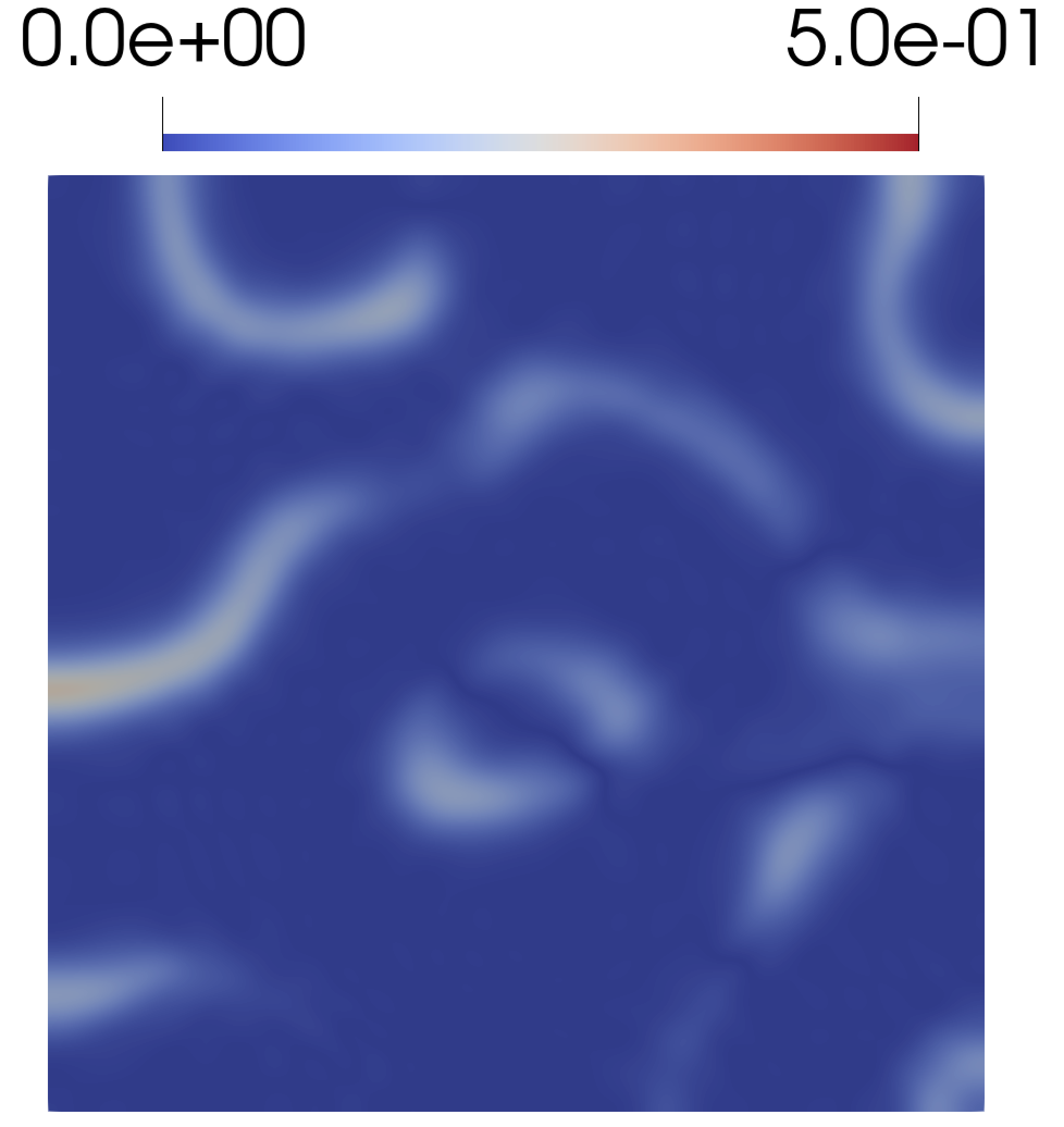}}
\subfigure[$t=2$ ]{\includegraphics[scale=0.11]{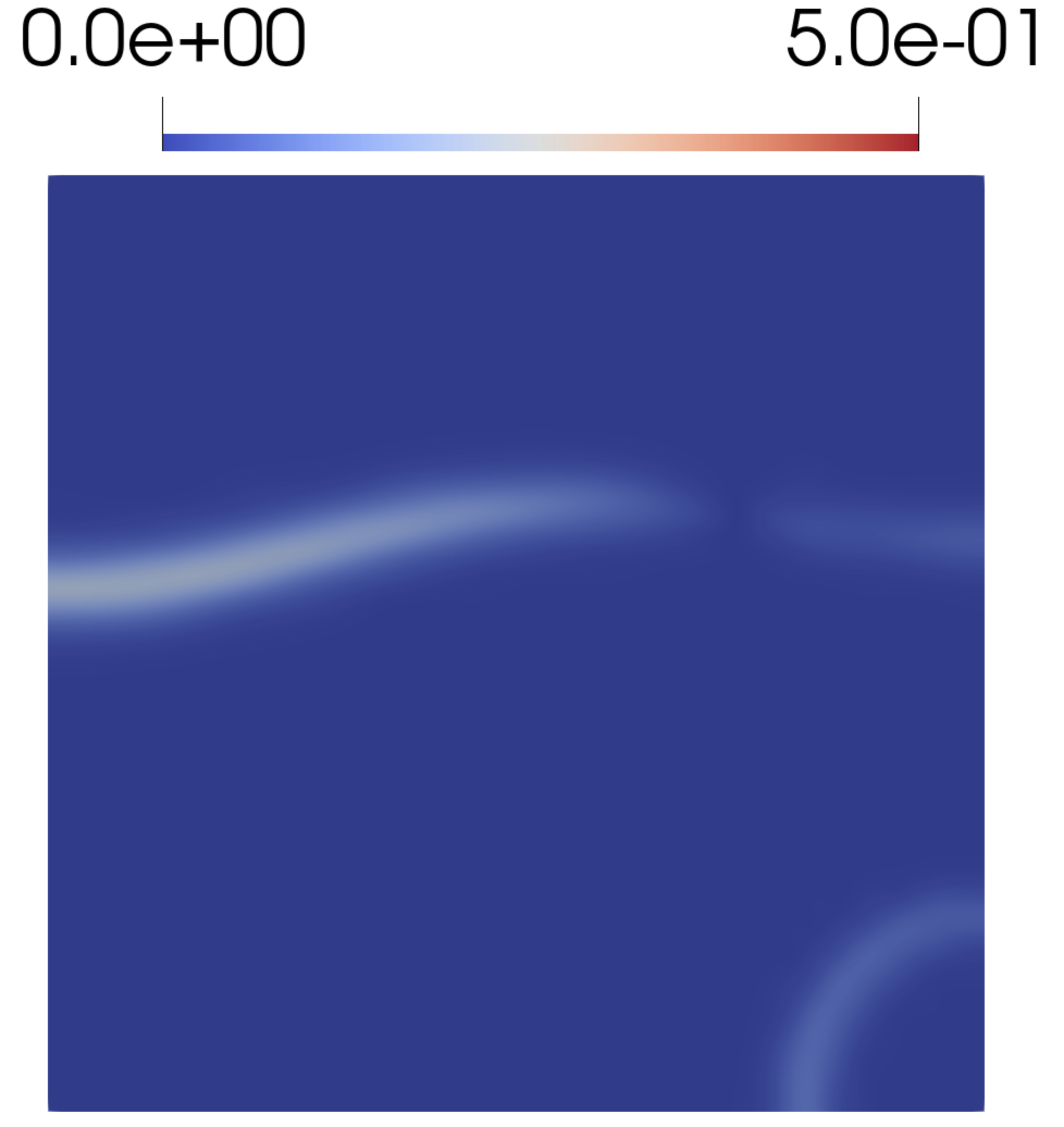}}
\subfigure[$t=10$ ]{\includegraphics[scale=0.11]{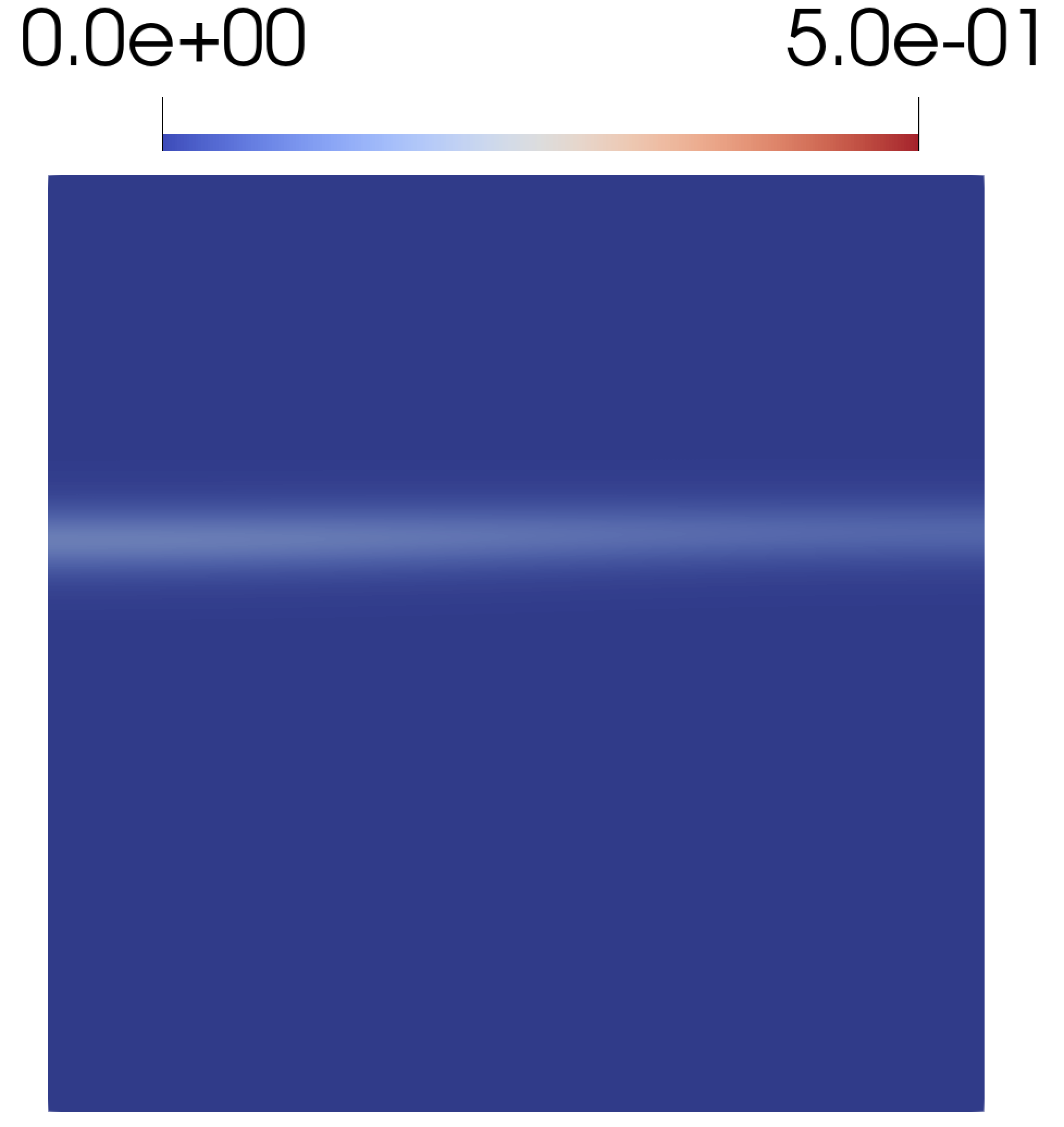}}
\caption{Case 2 - Difference between CTD and CFE, $|u^\text{CTD}-u^\text{CFE}|$}
\label{fig:du2D_k2}
\end{figure}

We remark that the accuracy of the CTD solutions can be further improved by increasing the number of modes for each time step.  {The CFE solution can be used as a reference to verify the accuracy of the CTD solution \cite{lu2023convolution}, since the same shape functions are used in CFE and CTD.} For illustration purposes, the evolution of the CTD solution (at $t=0.2$) with respect to an increasing number of modes is displayed in \figurename~\ref{fig:ctd_m}. We can see that the CTD converges to the CFE solution  {in} \figurename~\ref{fig:u2D_k2}(f) with the increased  {number of} modes $M$. In this example, 21 CTD modes seem sufficient to accurately represent the solution at $t=0.2$. The required number of modes $M$ can be varying for different time steps and is determined automatically by the solution algorithm, as mentioned in Section \ref{sect:solualgo}.  {Furthermore, we can observe that the error decreases relatively slowly after 15 modes in \figurename~\ref{fig:ctd_m}. This is due to the use of the fixed point algorithm, whose convergence rate is typically linear. This could be improved by adopting another solution algorithm (e.g., Newton-Raphson method) that has a higher order convergence rate.  Our experience is that the fixed point algorithm is able to provide satisfactory accuracy at a relatively low cost.}

\begin{figure}[htbp]
\centering
\subfigure[$M=1$]{\includegraphics[scale=0.11]{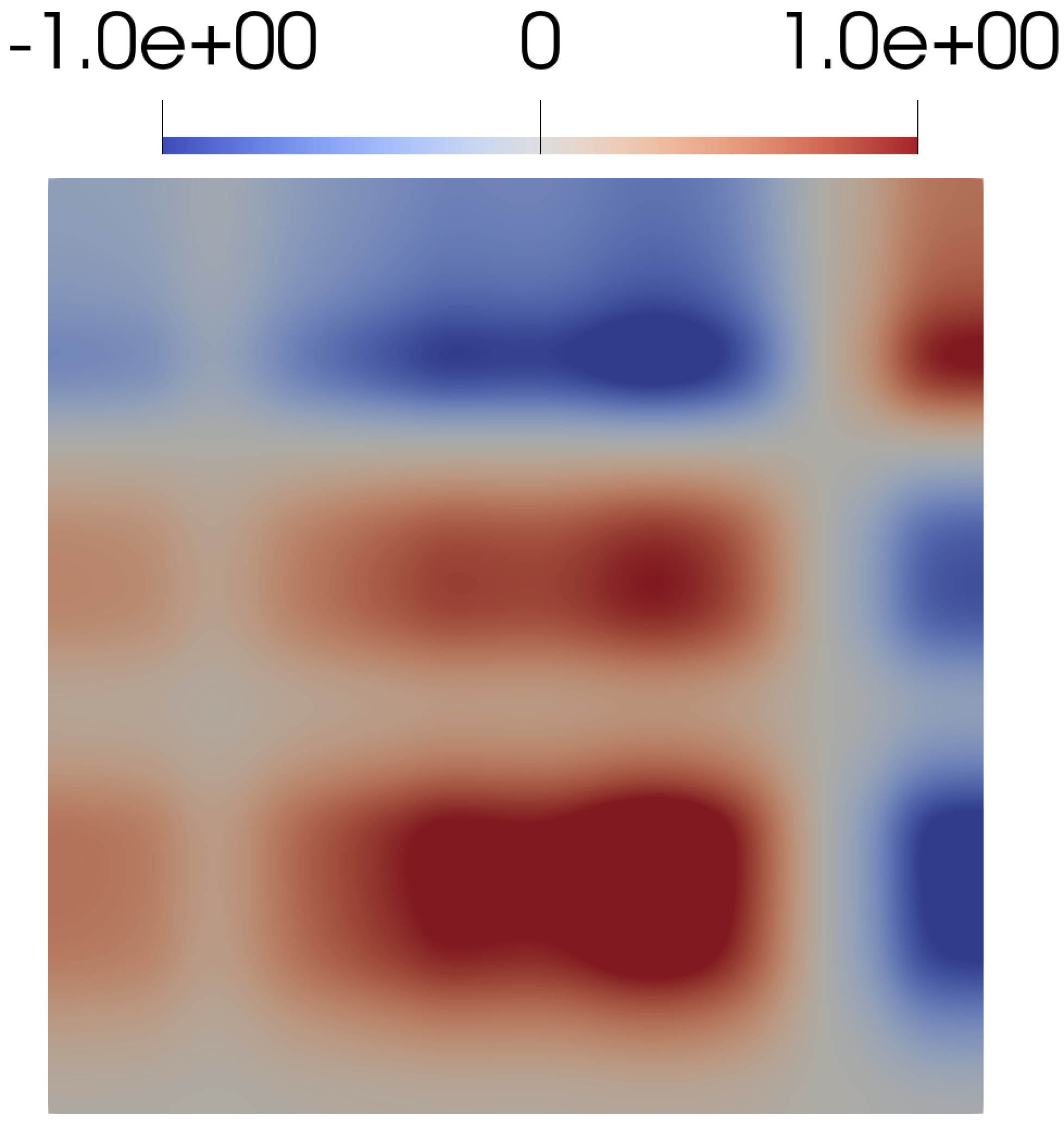}}
\subfigure[$M=5$]{\includegraphics[scale=0.11]{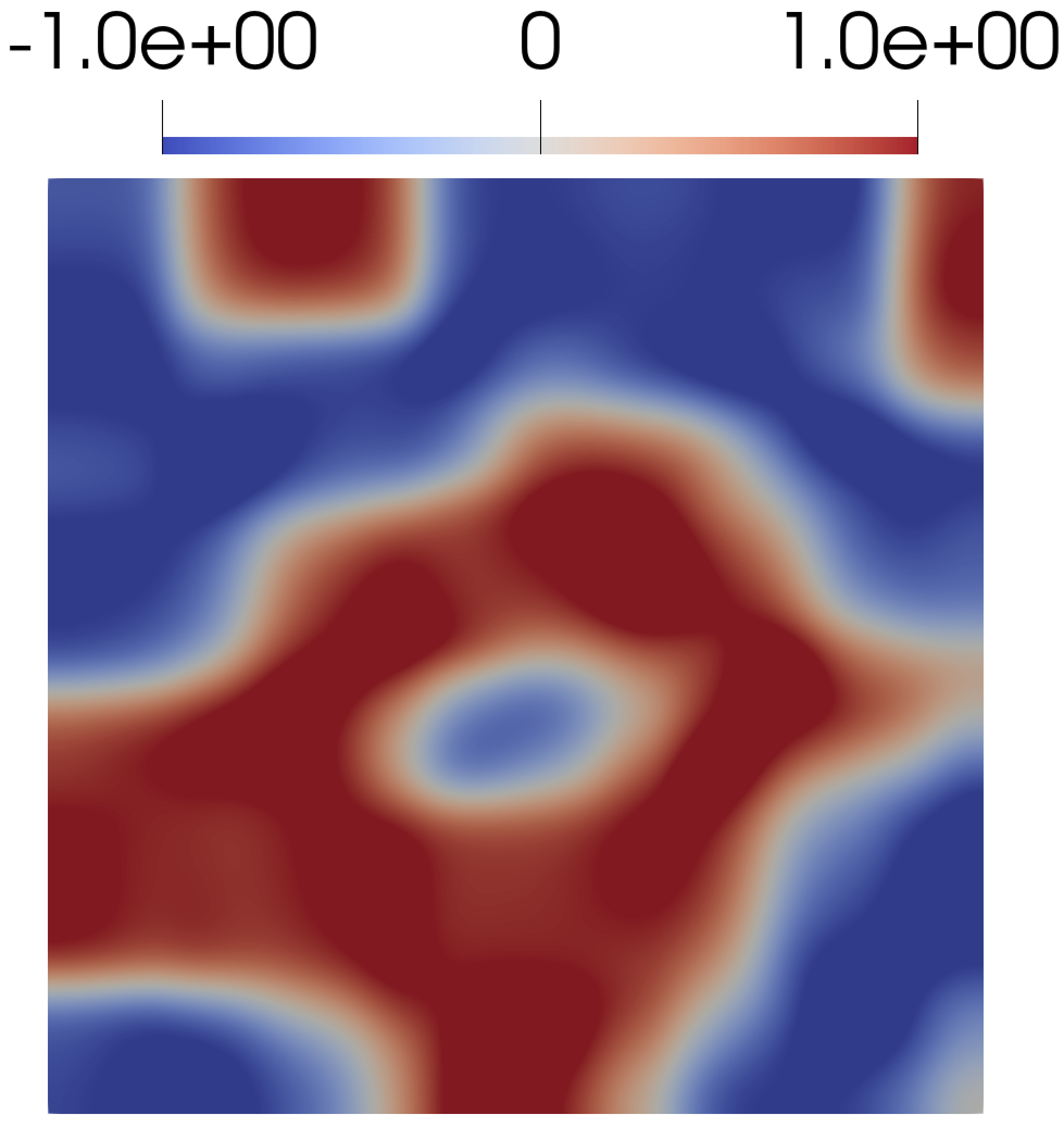}}
\subfigure[$M=10$ ]{\includegraphics[scale=0.11]{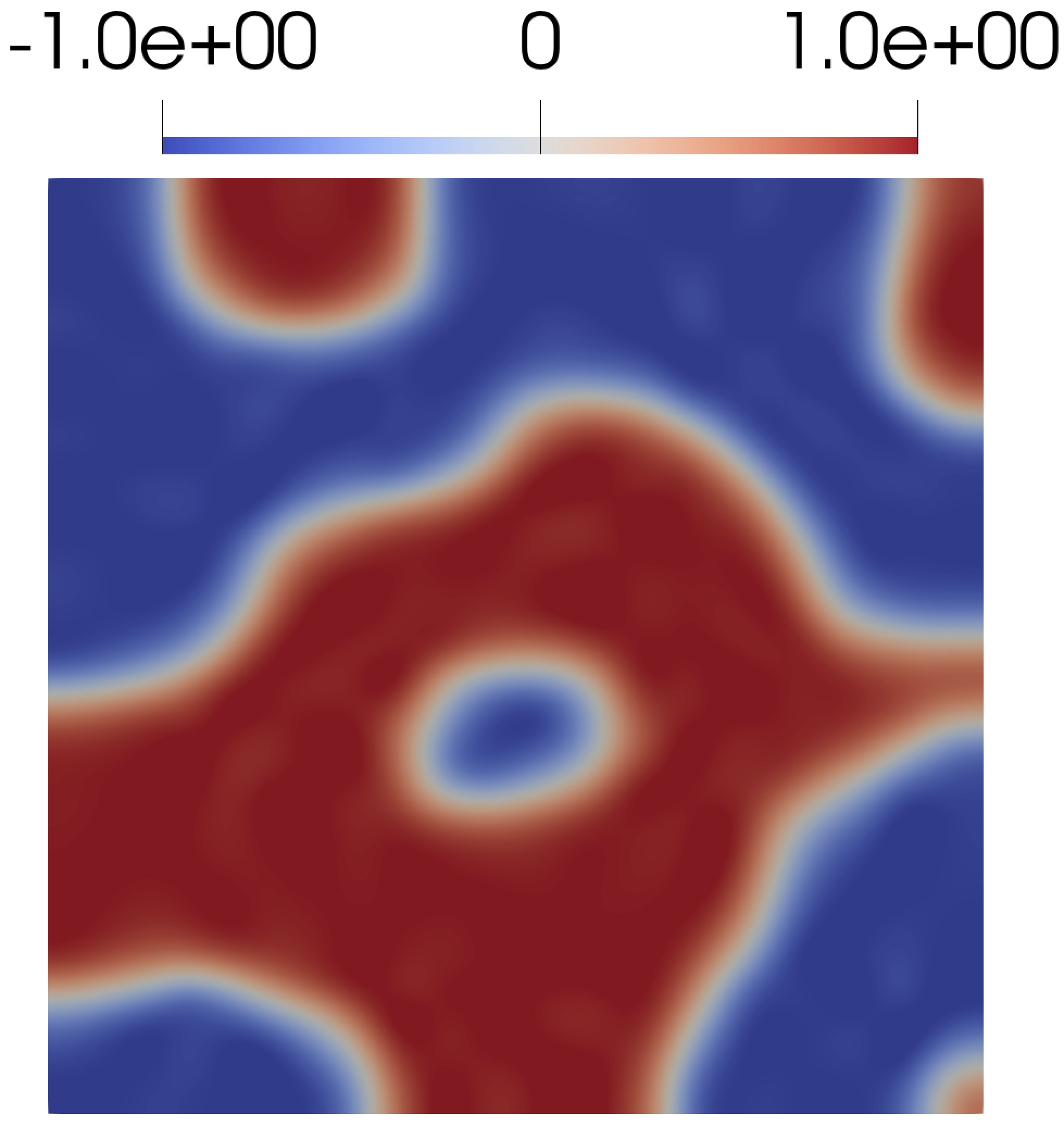}}
\subfigure[$M=21$ ]{\includegraphics[scale=0.11]{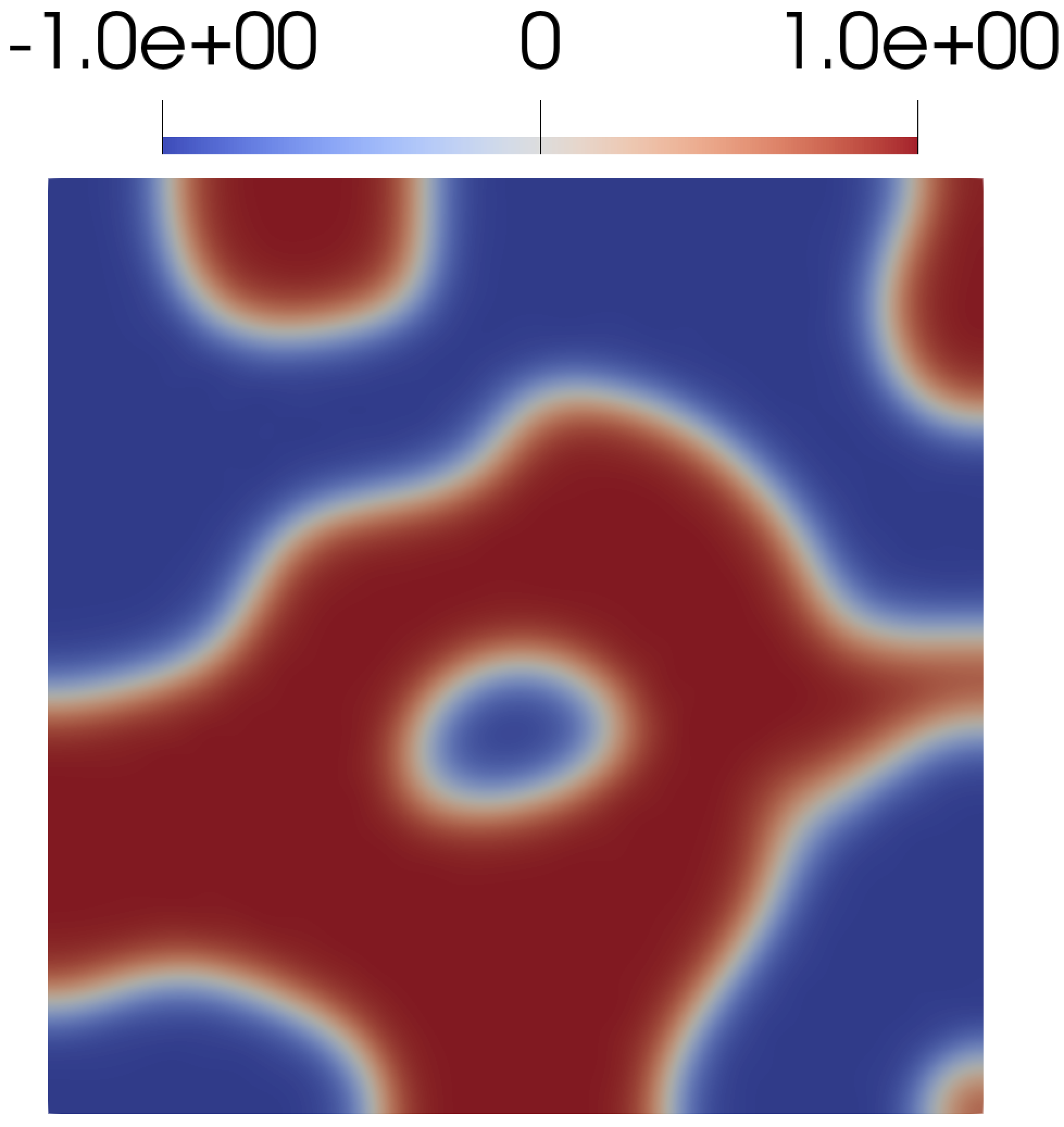}}
\subfigure[ {$\text{Error}=\frac{\|\boldsymbol{u}^\text{CTD}-\boldsymbol{u}^\text{CFE}\|_2}{\|\boldsymbol{u}^\text{CFE}\|_2}$} ]{\includegraphics[scale=0.15]{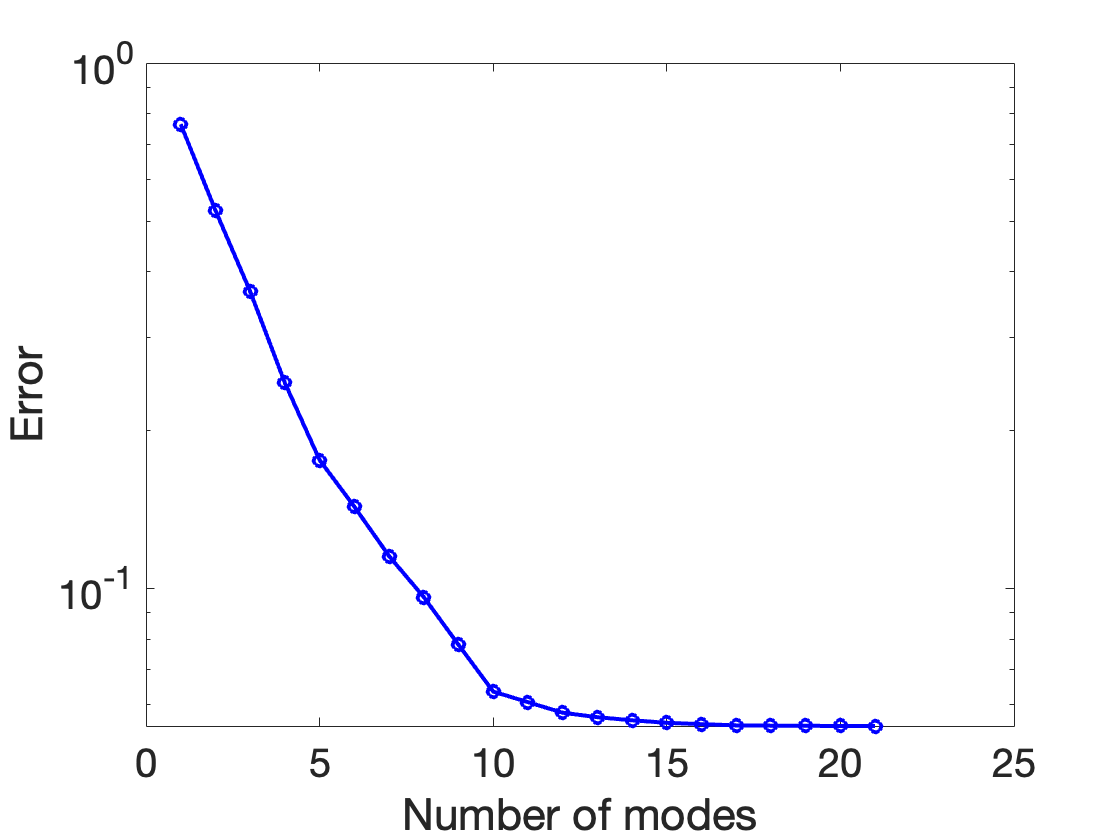}}
\caption{Case 2, $t=0.2$ - Evolution of the CTD solution with an increasing number of modes $M$}
\label{fig:ctd_m}
\end{figure}

Now, let us look at the computational speed of the CTD method, especially when dealing with high-resolution meshes. Three meshes are used here: $h=0.008, 0.004, 0.002$, which results in 391876, 1565001, 6255001 DoFs, respectively.  {The mesh information is summarized in \tablename~\ref{table:mesh}.} For the speed comparison, we use FE as the reference, as it is relatively more commonly used than CFE. We exclude the computational cost related to computing the coefficients of element matrices and the assembly process, since this can be easily minimized by leveraging parallel computing.  {To give an idea about this matrix cost, \tablename~\ref{table:matrixcost} summarizes the average cost per time step for matrix coefficients and assembly without the use of parallel computing.} To make FE more efficient with large size problems, an iterative solver, i.e., the preconditioned conjugate gradients (PCG) method pre-implemented in MATLAB \cite{MATLAB}  {with the default setting and a stopping criterion $10^{-5}$}, is used for solving the discrete problems. All the computations are performed in MATLAB on a workstation with the processor Intel Xeon W9-3495X.

\begin{table}[htbp]
\caption{ {The three meshes used for testing the computational speed}}
\centering
\begin{tabular}{|c|c|c|c|}
\hline
 Element size  & Mesh size & Total elements   & Total DoFs \\ \hline
$h=0.008$ & $625 \times 625$ &  390625  & 391876 \\ \hline
$h=0.004$ & $1250 \times 1250$  & 1562500 & 1565001  \\ \hline
$h=0.002$ & $2500 \times 2500$ &  6250000  & 6255001 \\ \hline
\end{tabular}\\
\label{table:mesh}
\end{table}

\begin{table}[htbp]
\caption{ {Average cost per time step for matrix coefficients and assembly}}
\centering
\begin{tabular}{|c|c|c|c|}
\hline
 Element size  & FE & CTD & Ratio  \\ \hline
$h=0.008$ & 49s&  4.3s & 11.3 \\ \hline
$h=0.004$ & 172s  & 17s & 10.1 \\ \hline
\end{tabular}\\
\label{table:matrixcost}
\end{table}

\figurename~\ref{fig:cpucost2D} summarizes the computational costs for the different cases. As shown in \figurename~\ref{fig:cpucost2D}(a), CTD is much faster than FE on a high-resolution mesh. In the present examples, a speedup 100$\times$ was found for a mesh with a few millions DoFs. This speedup can be even higher when the mesh becomes larger and larger, as the cost of traditional FE grows  {much faster than that of CTD}. This demonstrates the promising capability of the CTD method for high-resolution AC problems. Focusing on a mesh with $h=0.002$ (6255001 DoFs), we can look at the cost per time step for FE and CTD, as shown in \figurename~\ref{fig:cpucost2D}(b). In this example, CTD outperformed FE, except for the first time step. This is due to the relatively large number of modes required to represent the solution at $t=0.01$. We can see that the computational efficiency of CTD is strongly correlated with the number of modes $M$ for each time step, by comparing \figurename~\ref{fig:cpucost2D}(b) and \figurename~\ref{fig:cpucost2D}(c). Fortunately, the required number of modes remains very small for most of the time steps, which ensures the efficiency of method.

\begin{figure}[htbp]
\centering
\subfigure[Cost vs Mesh DoFs]{\includegraphics[scale=0.135]{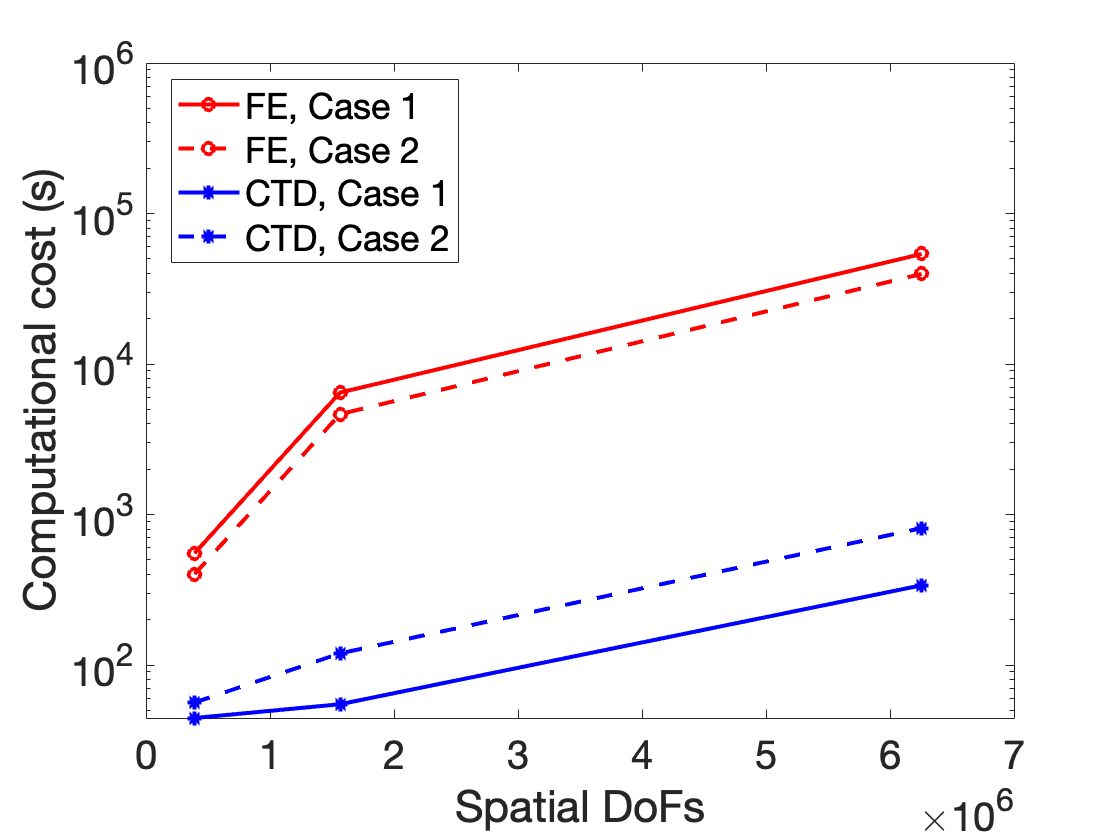}}
\subfigure[Cost for each time step]{\includegraphics[scale=0.135]{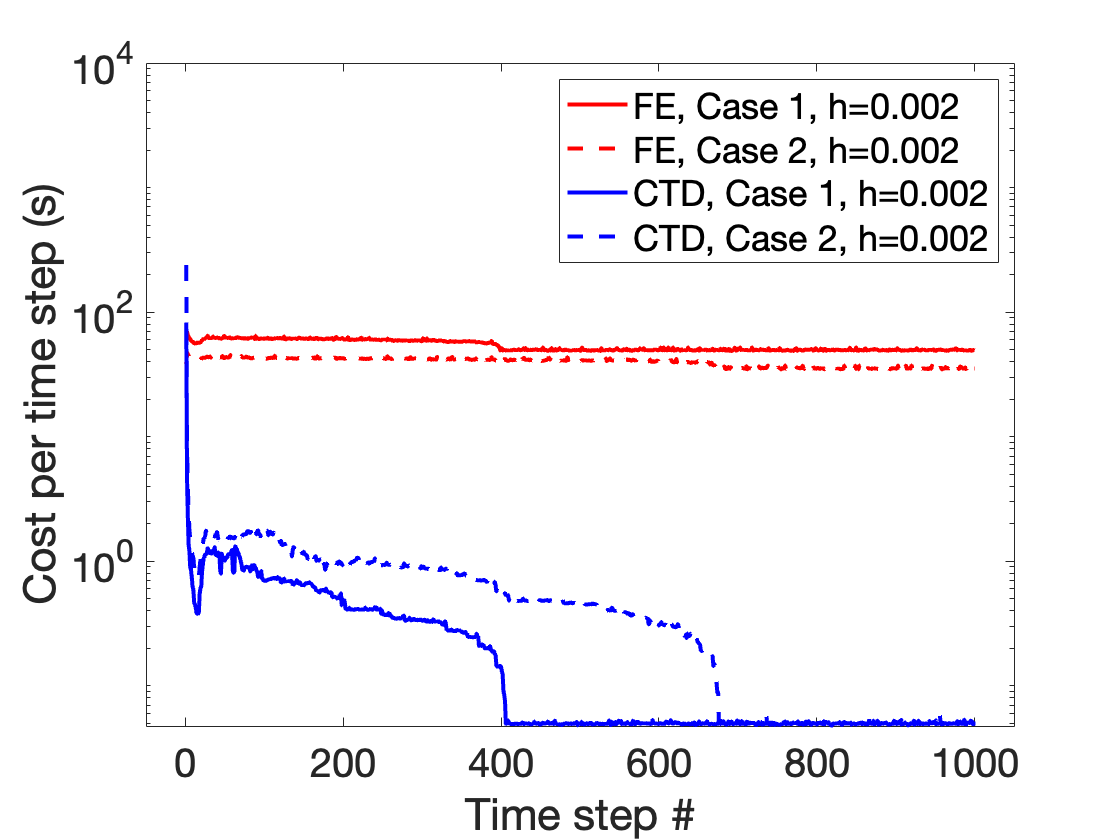}}
\subfigure[Number of modes $M$]{\includegraphics[scale=0.135]{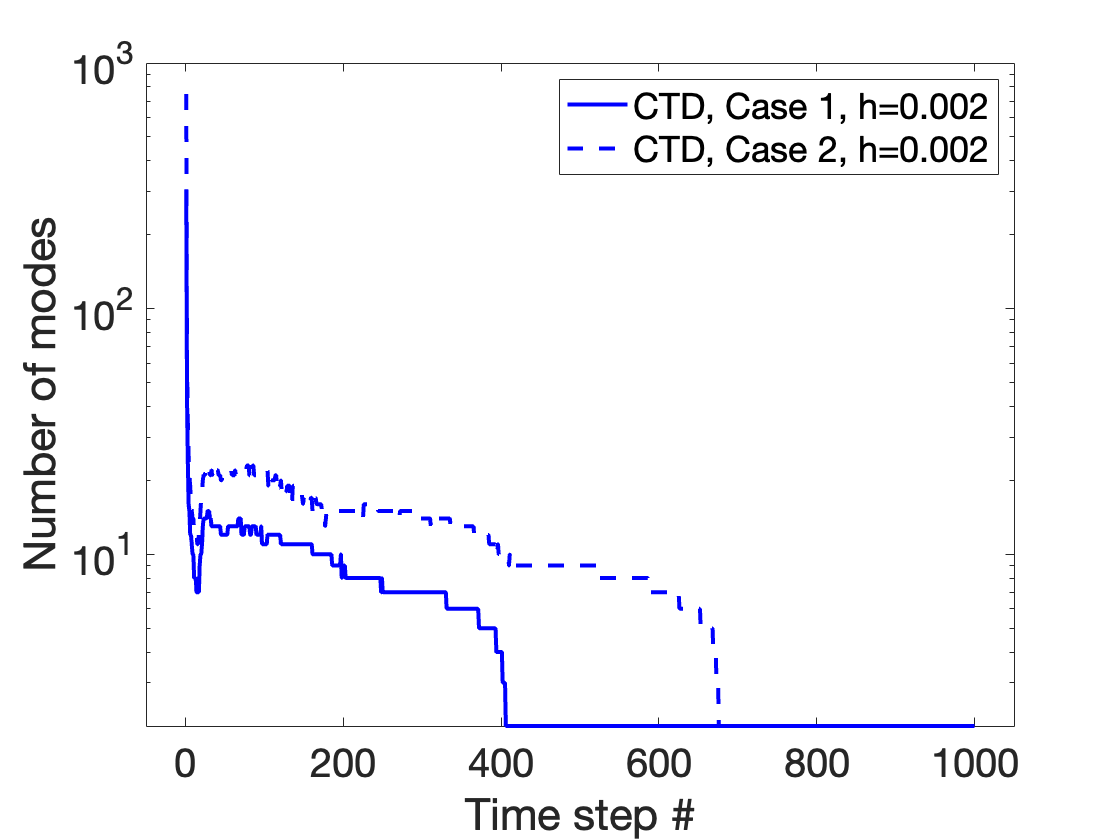}}
\caption{Computational costs of FE and CTD for Case 1 and Case 2}
\label{fig:cpucost2D}
\end{figure}

The efficiency of CTD can still be improved by adopting the proposed adaptive algorithm,  {the so-called ACTD method in Section \ref{sect:solualgo}}. As shown in \figurename~\ref{fig:cpucost2D}(b), the first time step seems the most time-consuming step for CTD, due to the large number of modes. The ACTD method can reduce this computational cost, based on a hybrid CTD/FE computational strategy. As an example, we compare the CTD and ACTD methods using the Case 2 problem with $h=0.002$. The critical (maximum) mode for ACTD is set to be $M_c=100$. We focus on the first 50 time steps, since the remaining steps should be the same for both methods. \figurename~\ref{fig:cpucost2Dadaptive} illustrates the computational costs for CTD and ACTD. As shown in the figure, the computational time is greatly reduced with ACTD for the first time step without increasing the time for  the other steps. The original CTD requires more than 700 modes for the first time step, whereas ACTD does not go beyond 100 modes. This is achieved automatically by slightly modifying the CTD solution algorithm and can be very powerful and robust in general cases. 

\begin{figure}[htbp]
\centering
\subfigure[Cost for each time step]{\includegraphics[scale=0.135]{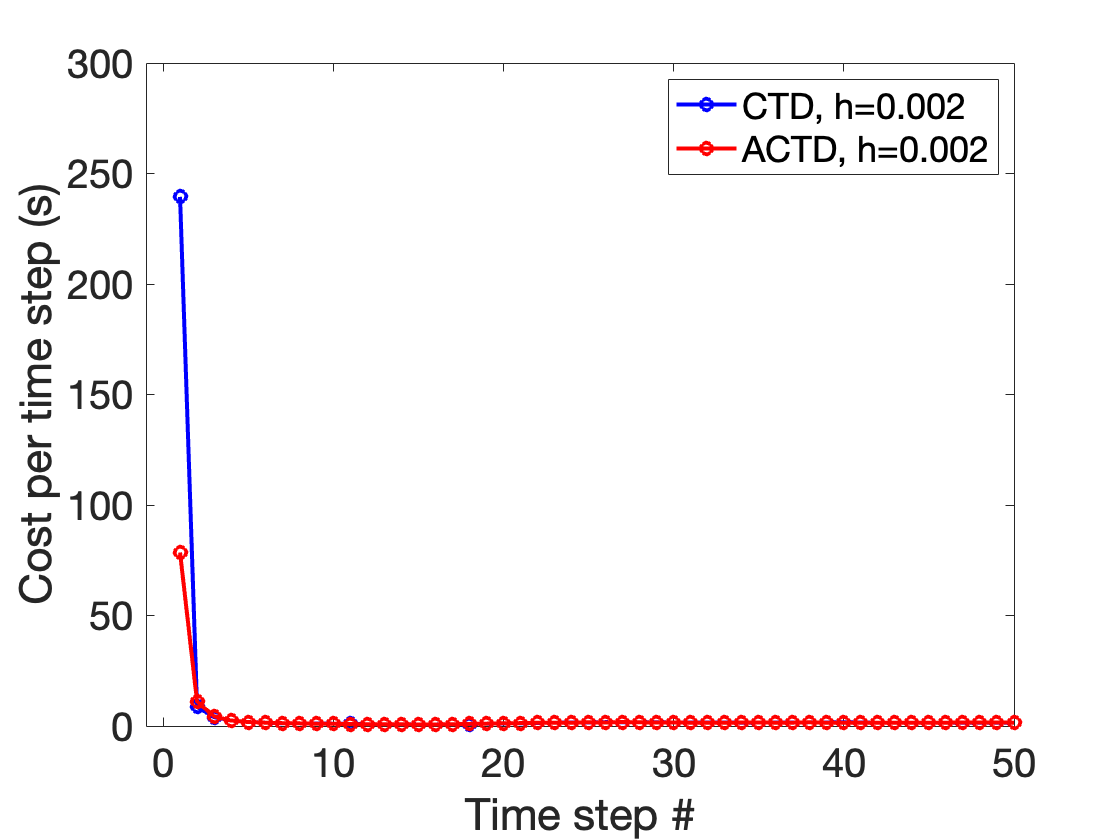}}\quad\quad\quad
\subfigure[Number of modes $M$]{\includegraphics[scale=0.135]{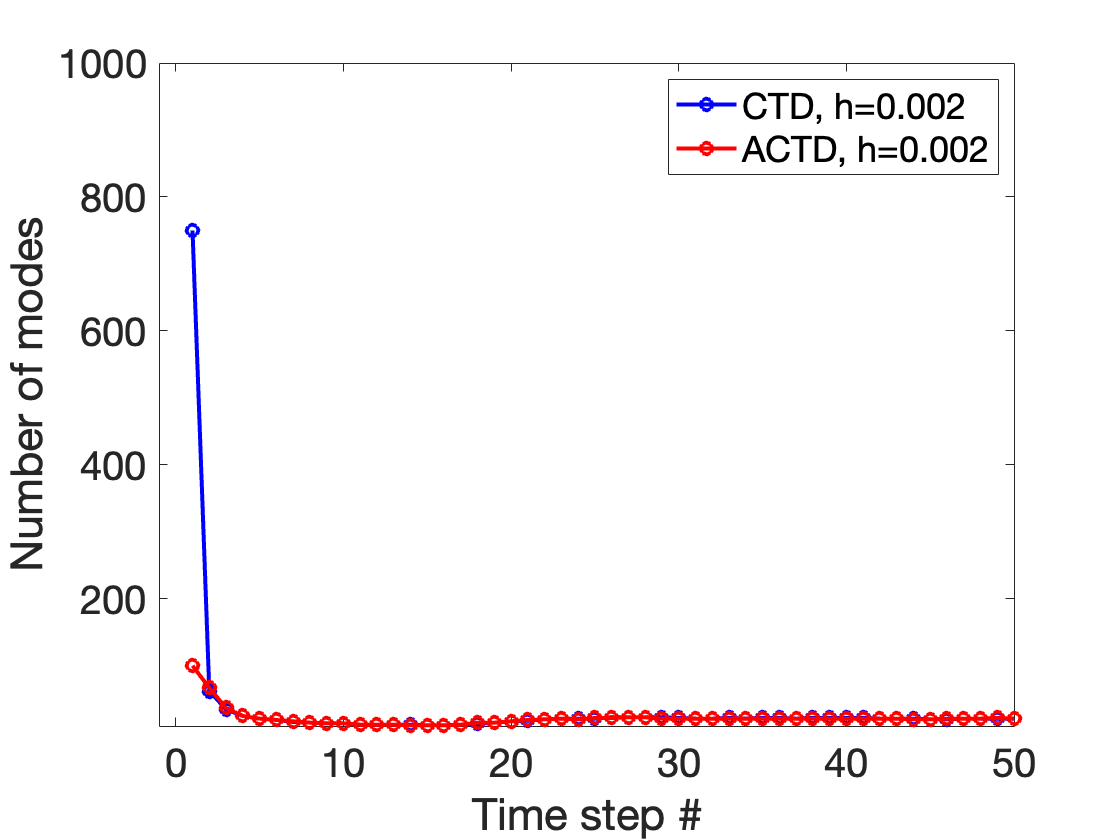}}
\caption{Computational costs of CTD and ACTD for  Case 2}
\label{fig:cpucost2Dadaptive}
\end{figure}

\subsection{3D AC problem}
Next, we test the performance of the method in 3D cases. We use a similar problem setup to  the 2D problems with the coefficients in \tablename~\ref{table:coef} (Case 1), i.e., $L=5, \kappa=1, \alpha_0=10$. The temporal discretization remains the same, i.e., $\Delta t =0.01$ for $t\in [0,10]$. The random initial condition and the computational domain are displayed in \figurename~\ref{fig:domain3D}. We focus on the comparison between ACTD and FE. Here, we ignore the fact that CFE solutions should be taken as the reference for CTD solutions, as our experience with the previous 2D example already demonstrated the similarity between FE and CFE solutions for the given problem. Hence, we can use the FE solutions to qualitatively validate the CTD solutions. The ACTD method used here is designed with a maximum number of mode $M_c=200$.

\begin{figure}[htbp]
\centering
{\includegraphics[scale=0.2]{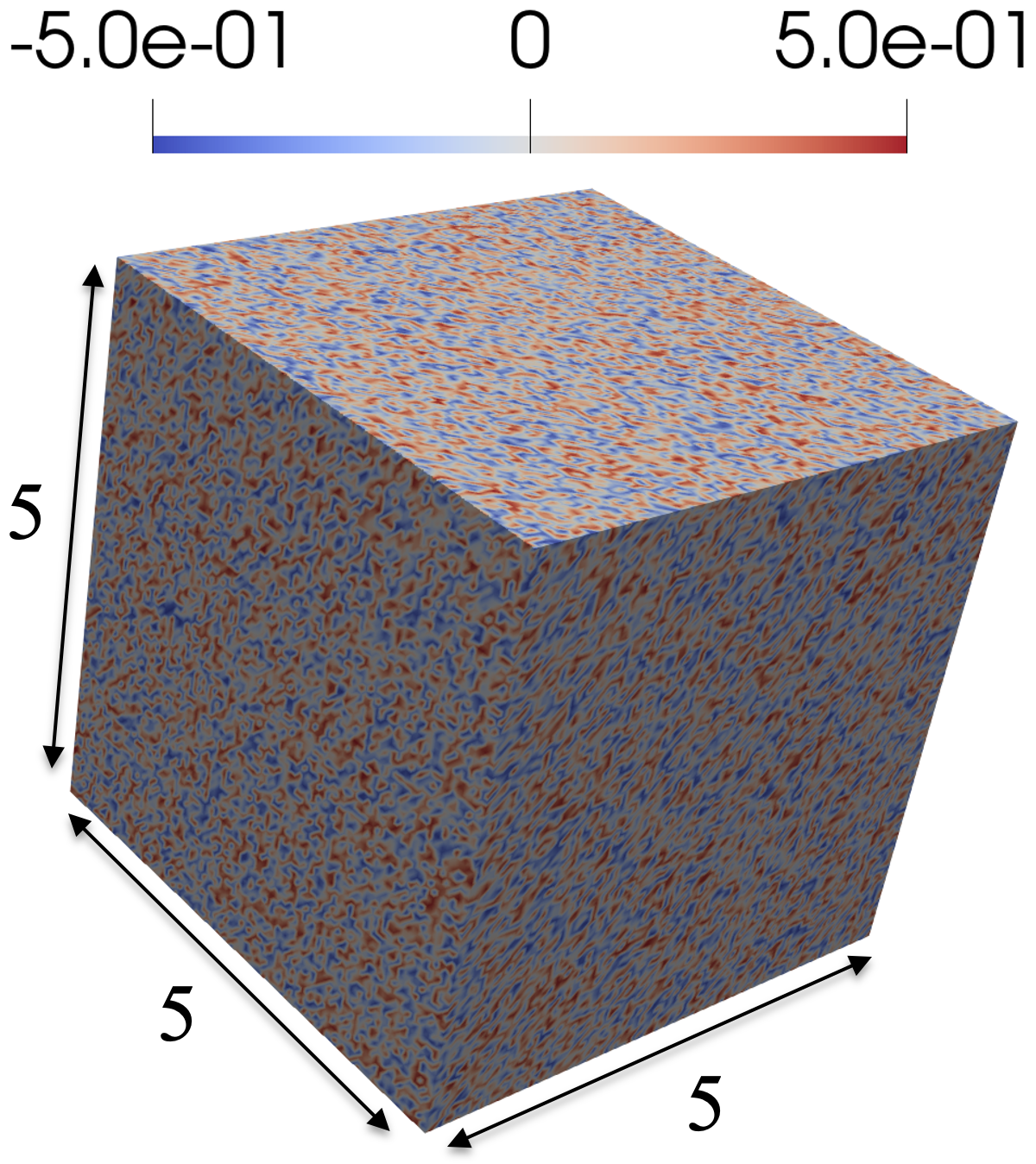}}
\caption{Computational domain and initial condition for the 3D problem}
\label{fig:domain3D}
\end{figure}

\figurename~\ref{fig:u3d} illustrates some solution snapshots of the FE and ACTD methods for the 3D problem. The element size of the 3D uniform mesh is $h=h_0=0.05$. We can see that the ACTD solutions  {agree well with} the FE solutions. This confirms the  {capability} of the method in 3D. To compare the solution efficiency, we further refined the mesh and compared three meshes with $h=h
_0, h_0/1.5, h_0/2$, which corresponds to 1030301, 3442951, 8120601 DoFs. \figurename~\ref{fig:cpucost3D}(a) illustrates the computational cost of each method with the increasing DoFs. As we can see,  ACTD is much more efficient than FE for high resolution problems. The speedup keeps increasing when the resolution increases. \figurename~\ref{fig:cpucost3D}(b) displays the cost per time step for the mesh $h=h_0/2$ in a logarithmic scale. The ACTD method is faster than FE for most of time steps. Again, the first few steps seem less efficient due to the relatively large number of modes, as shown in \figurename~\ref{fig:cpucost3D}(c). ACTD is capable of limiting the computations up to the prescribed number of modes  $M_c=200$, and finally makes the cost of the first few steps equivalent to that of FE. This seems critical as the first few steps might require more than 1000 modes to represent a complex 3D solution, which might reduce the advantages of CTD. This example has confirmed the performance of the proposed method in 3D.

\begin{figure}[htbp]
\centering
\subfigure[FE, $t=0.01$]{\includegraphics[scale=0.11]{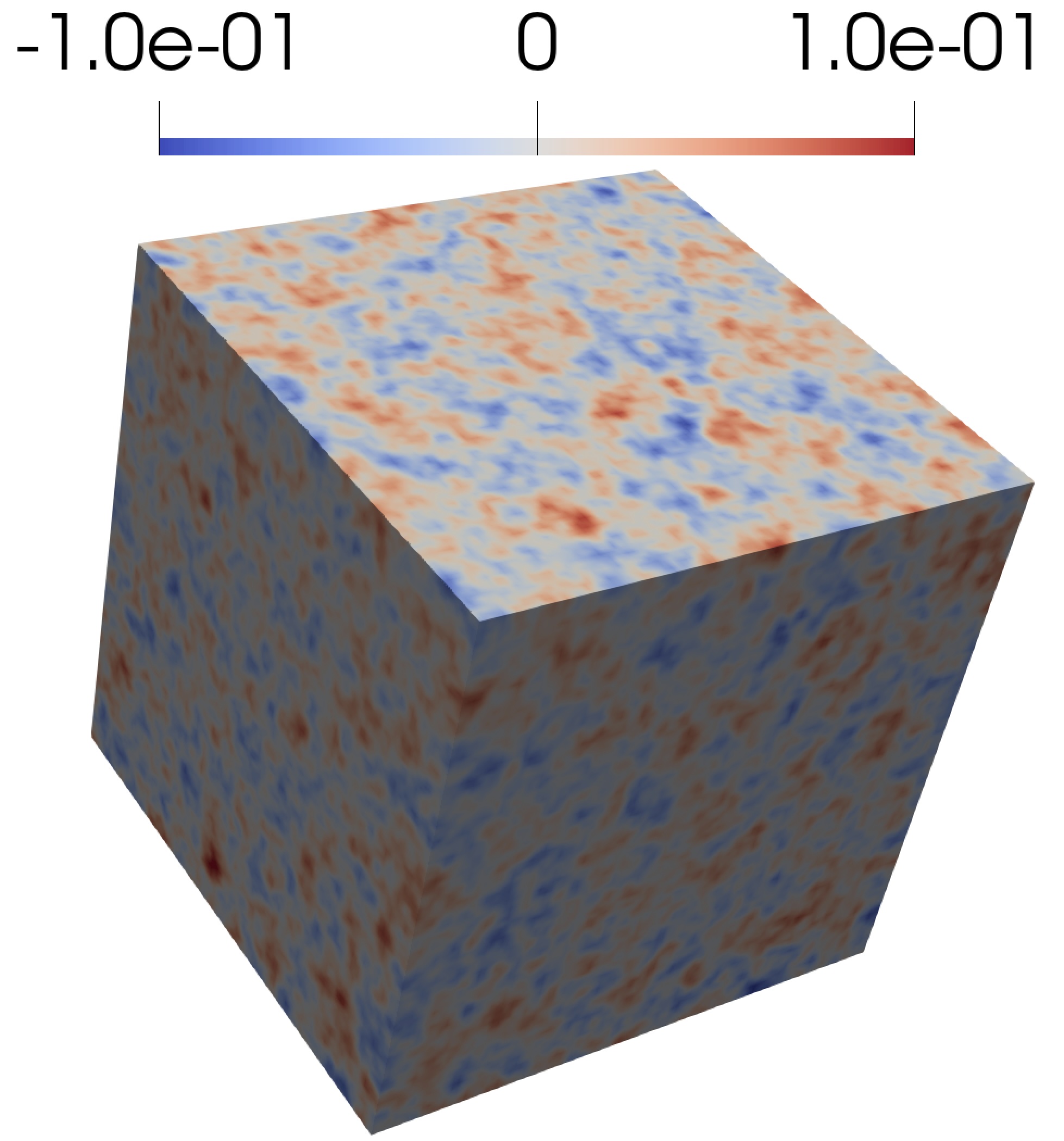}}
\subfigure[FE, $t=0.2$]{\includegraphics[scale=0.11]{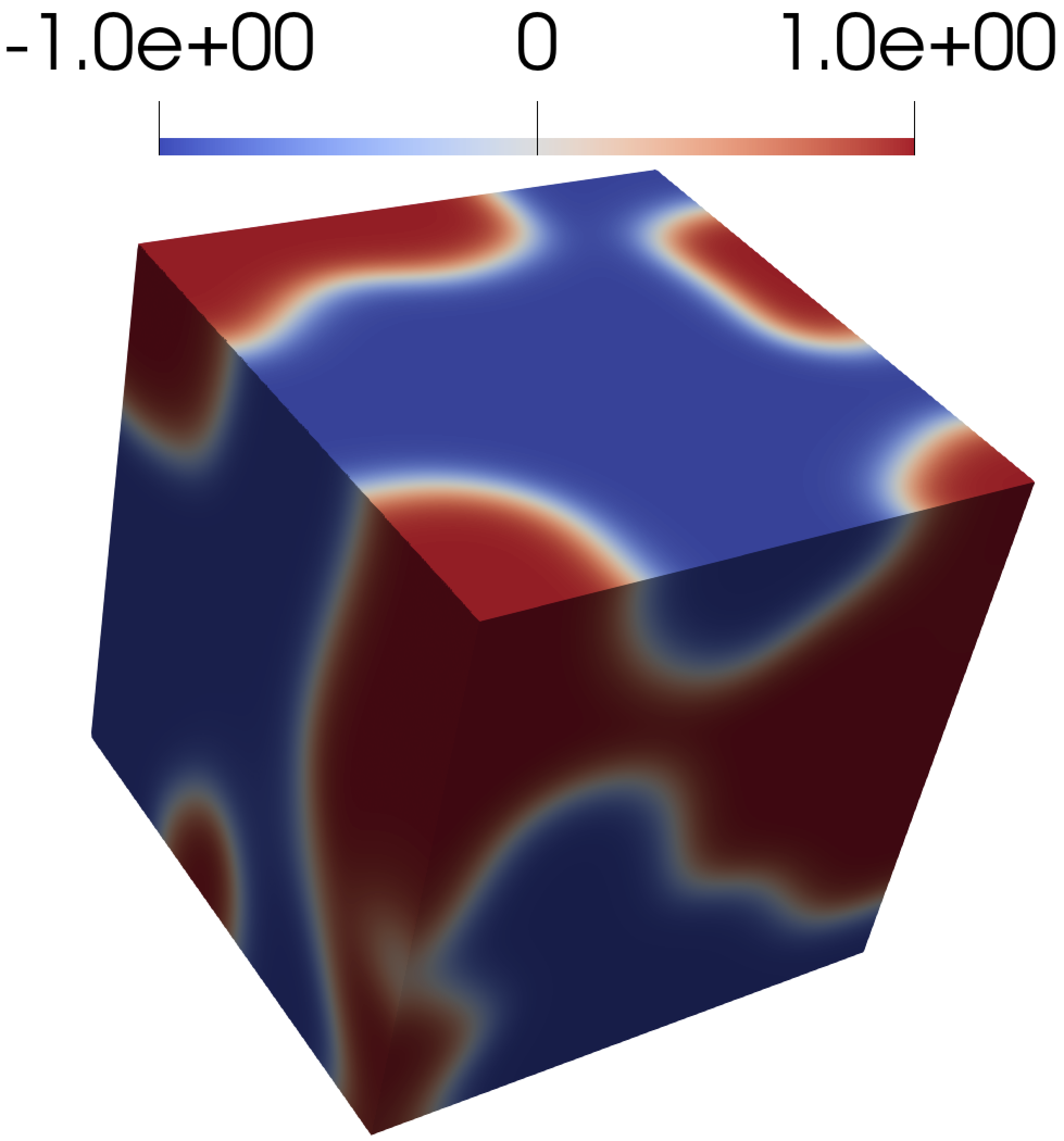}}
\subfigure[FE, $t=2$ ]{\includegraphics[scale=0.11]{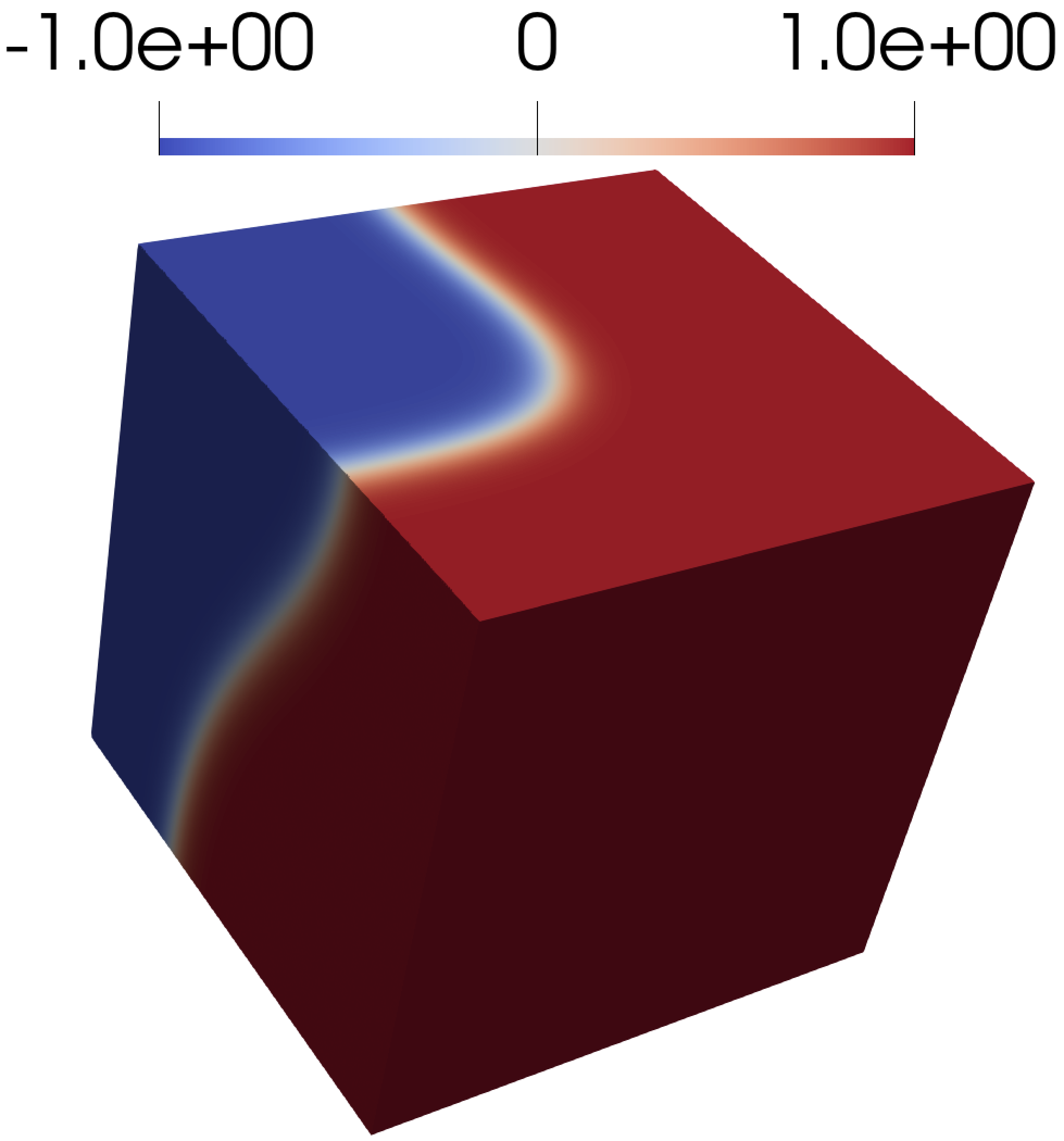}}
\subfigure[FE, $t=10$ ]{\includegraphics[scale=0.11]{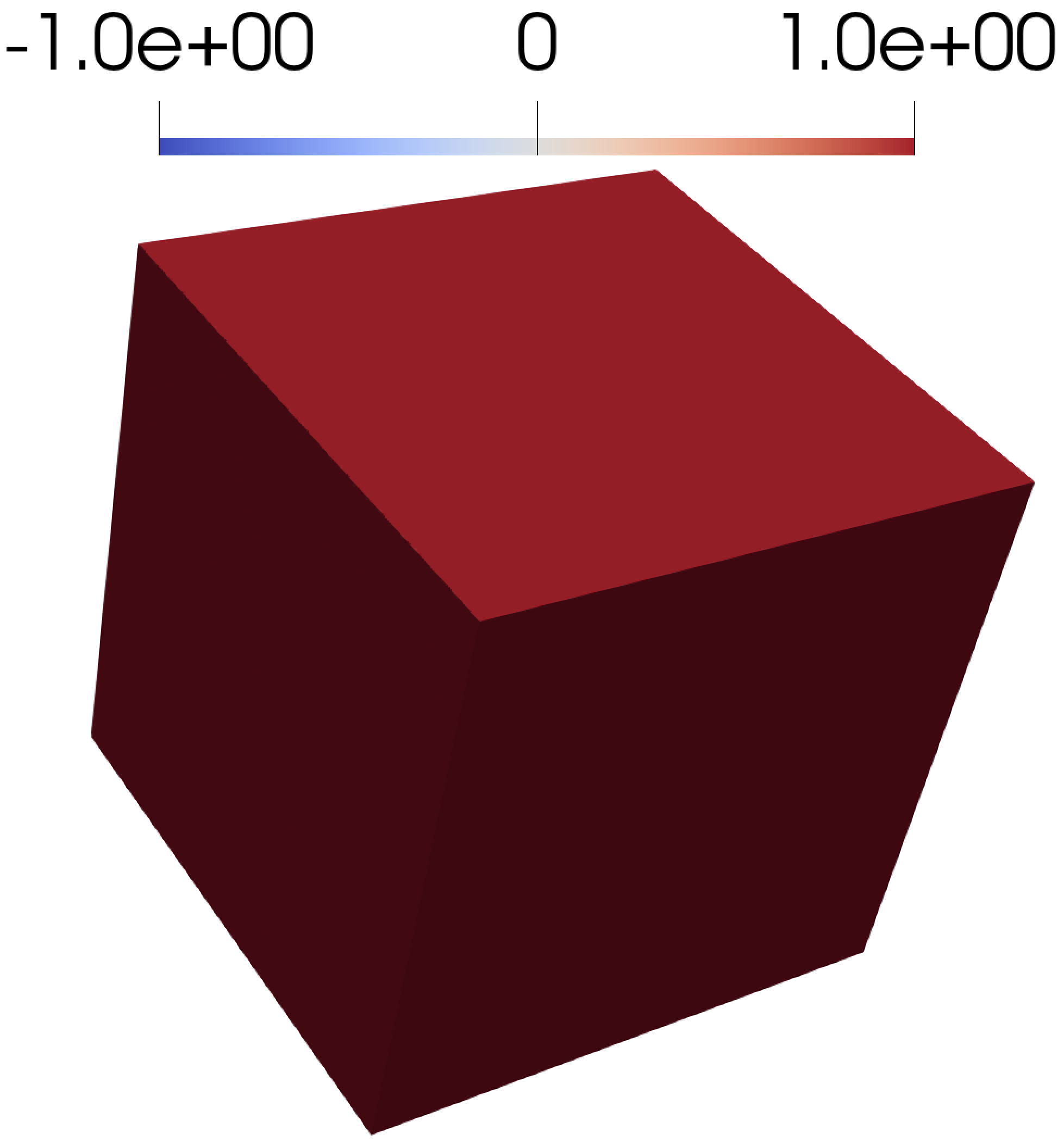}}\\
\subfigure[ACTD, $t=0.01$]{\includegraphics[scale=0.11]{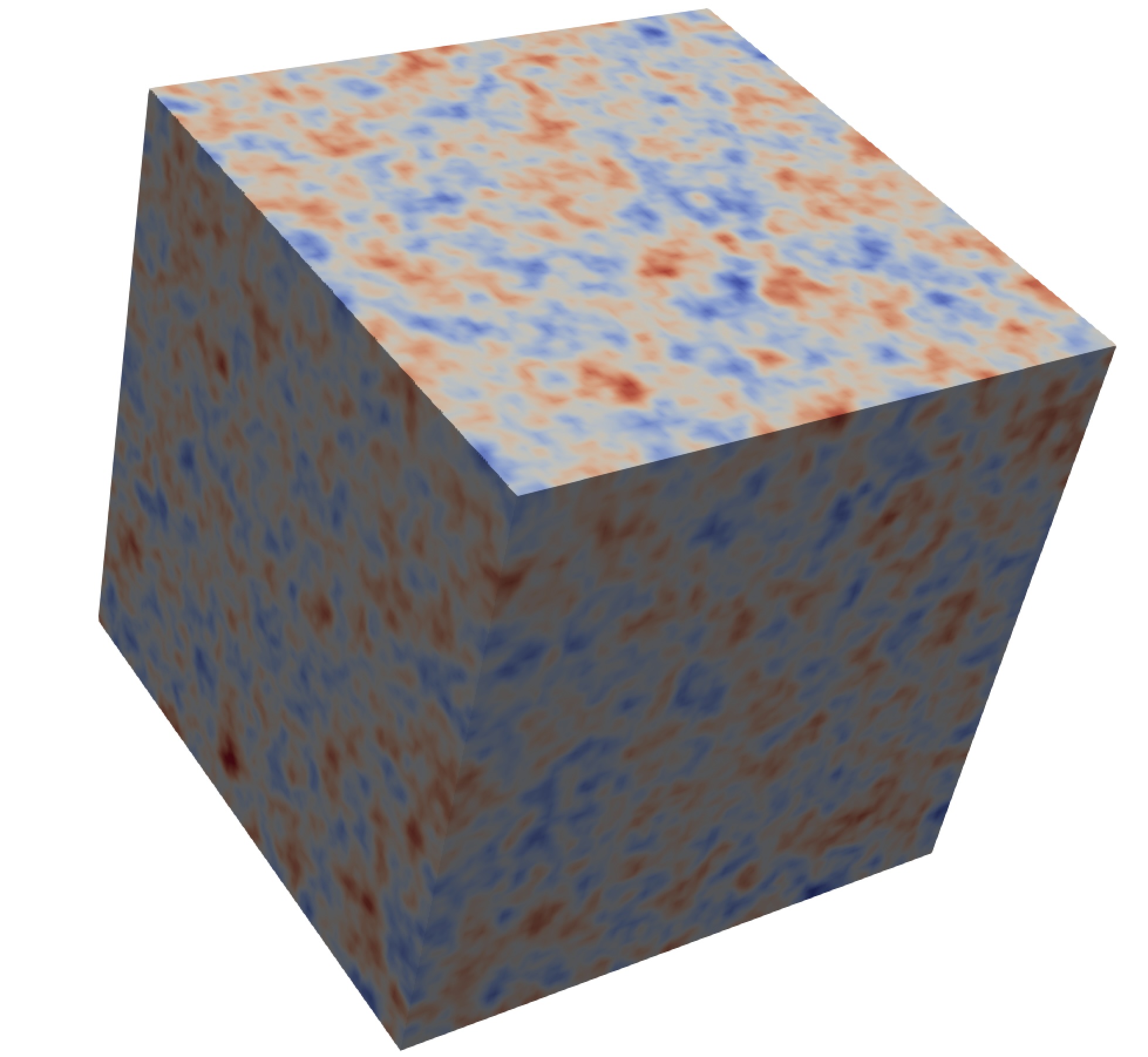}}
\subfigure[ACTD, $t=0.2$]{\includegraphics[scale=0.11]{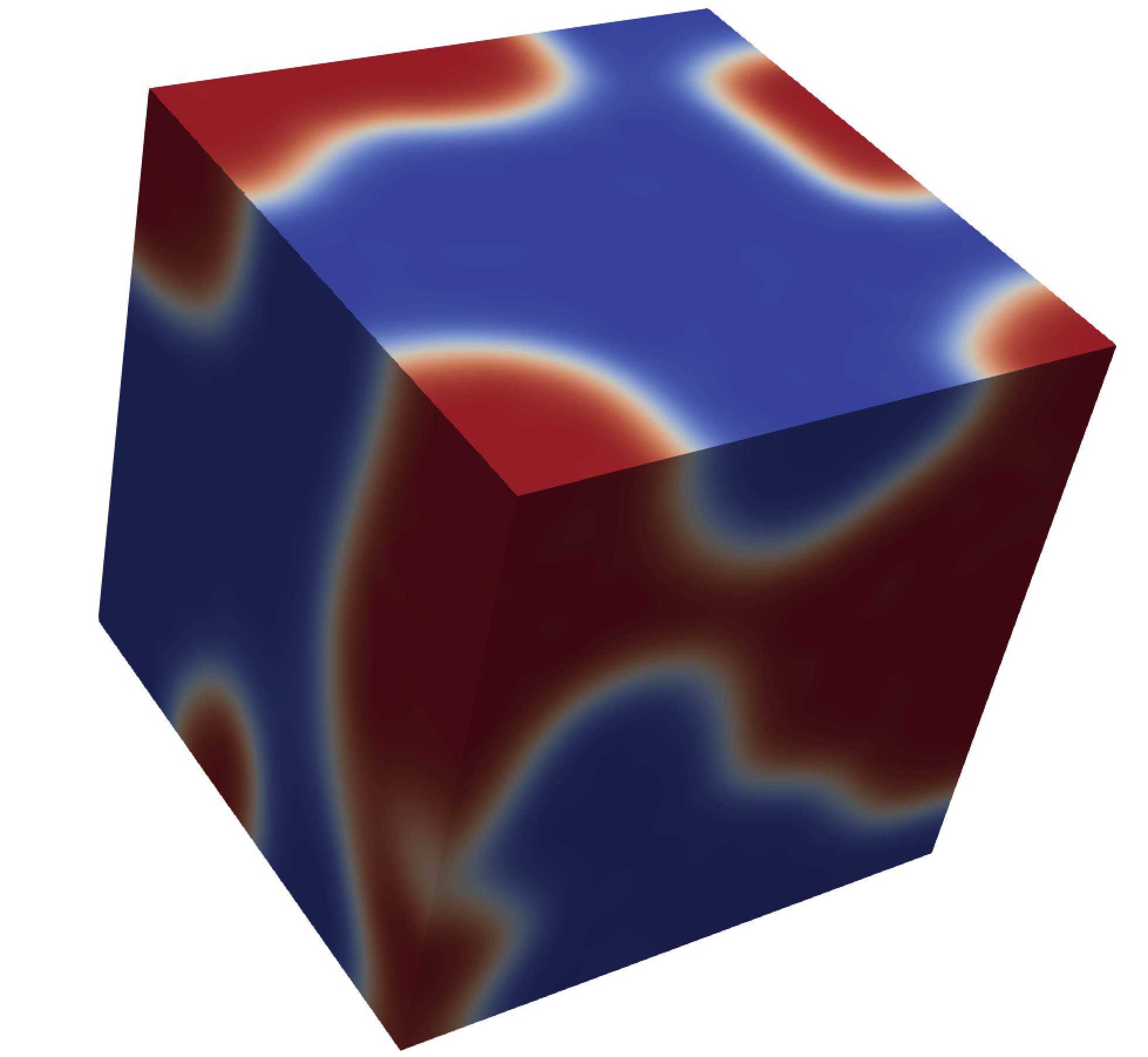}}
\subfigure[ACTD, $t=2$ ]{\includegraphics[scale=0.11]{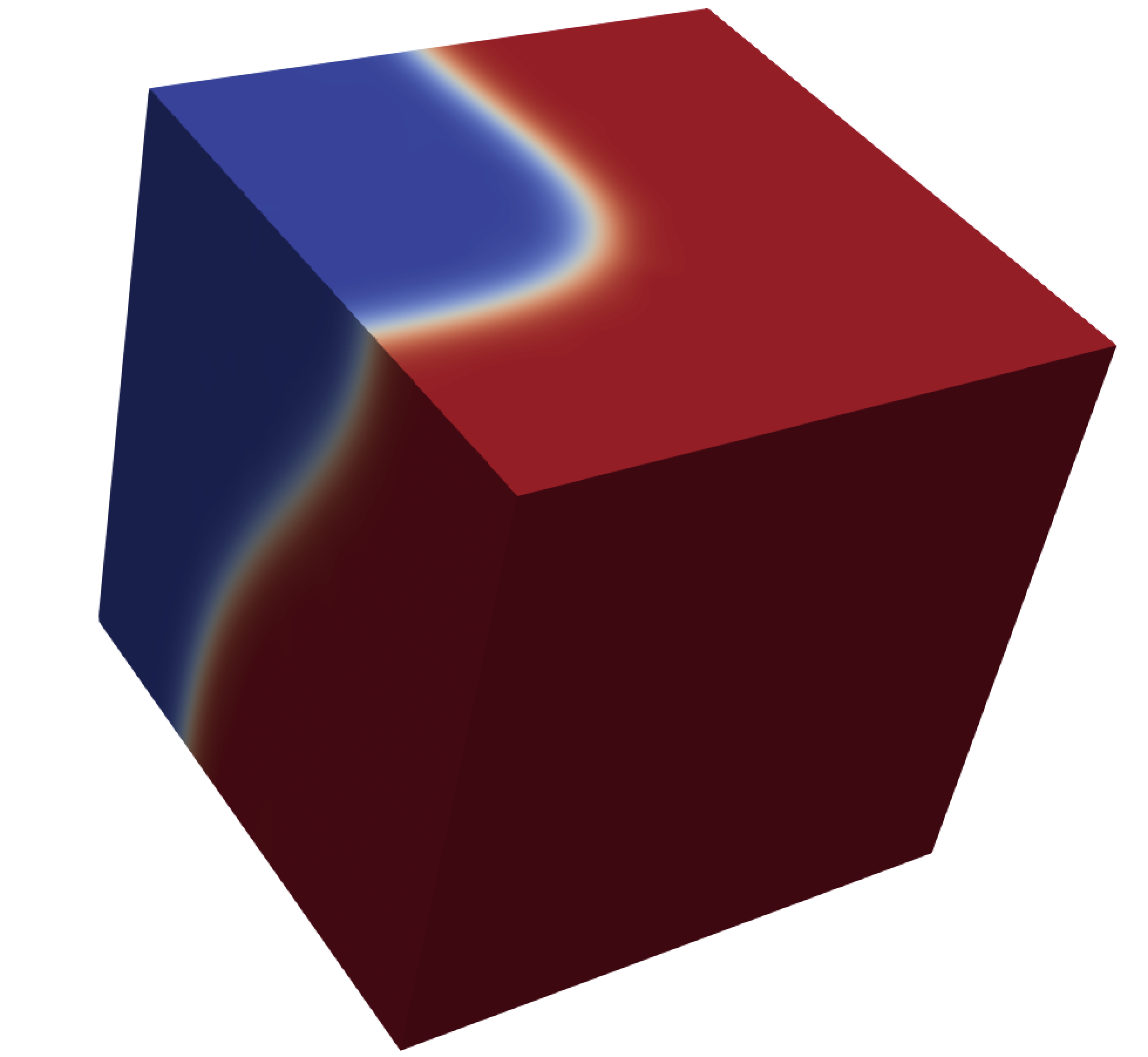}}
\subfigure[ACTD, $t=10$ ]{\includegraphics[scale=0.11]{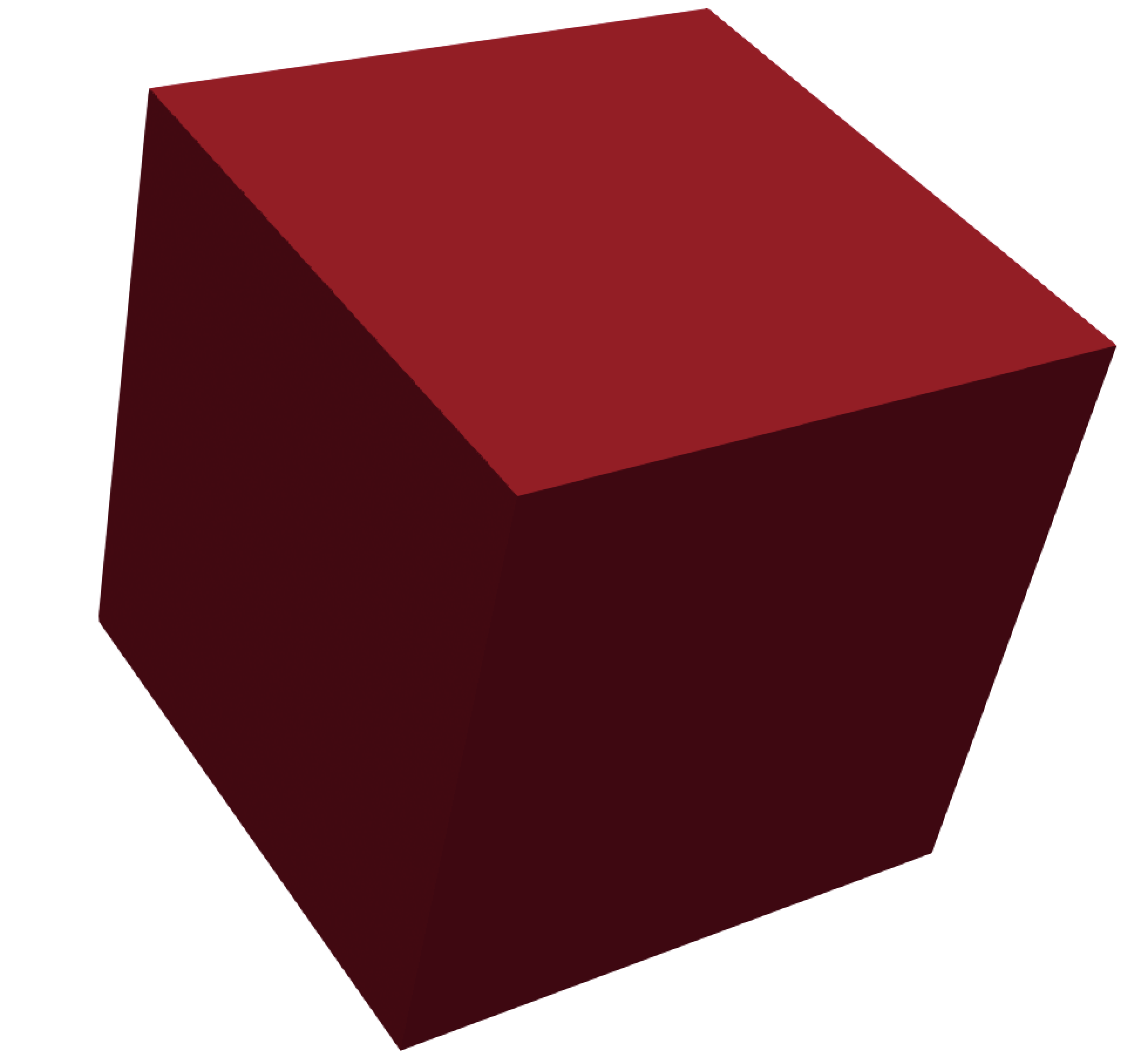}}
\caption{Solution snapshots of 3D FE and ACTD at different time steps}
\label{fig:u3d}
\end{figure}

\begin{figure}[htbp]
\centering
\subfigure[Cost vs Mesh DoFs]{\includegraphics[scale=0.135]{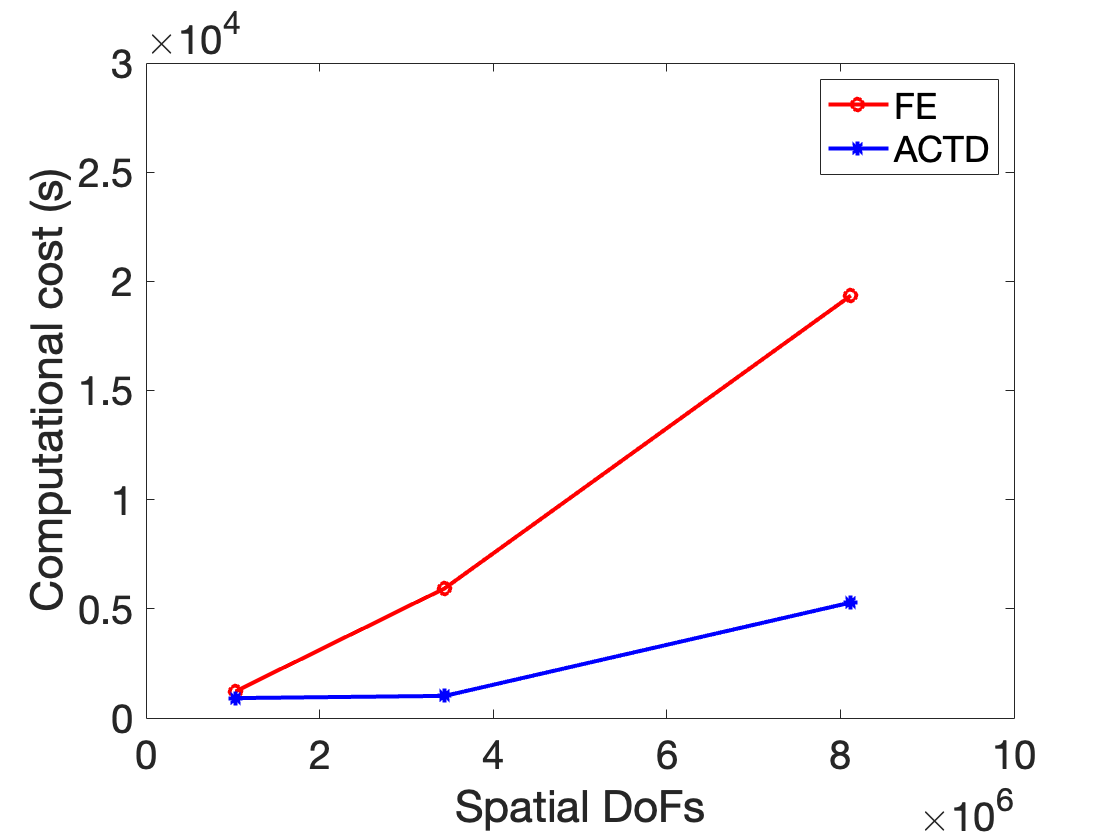}}
\subfigure[Cost for each time step]{\includegraphics[scale=0.135]{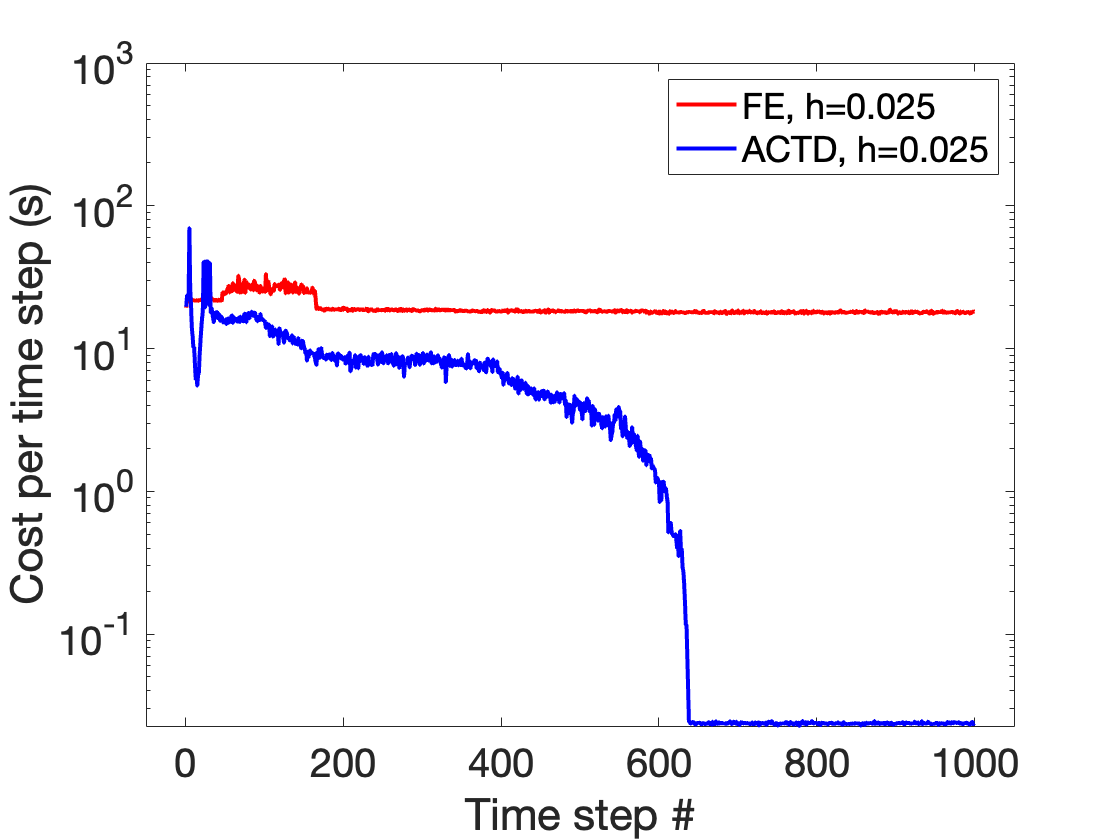}}
\subfigure[Number of modes $M$]{\includegraphics[scale=0.135]{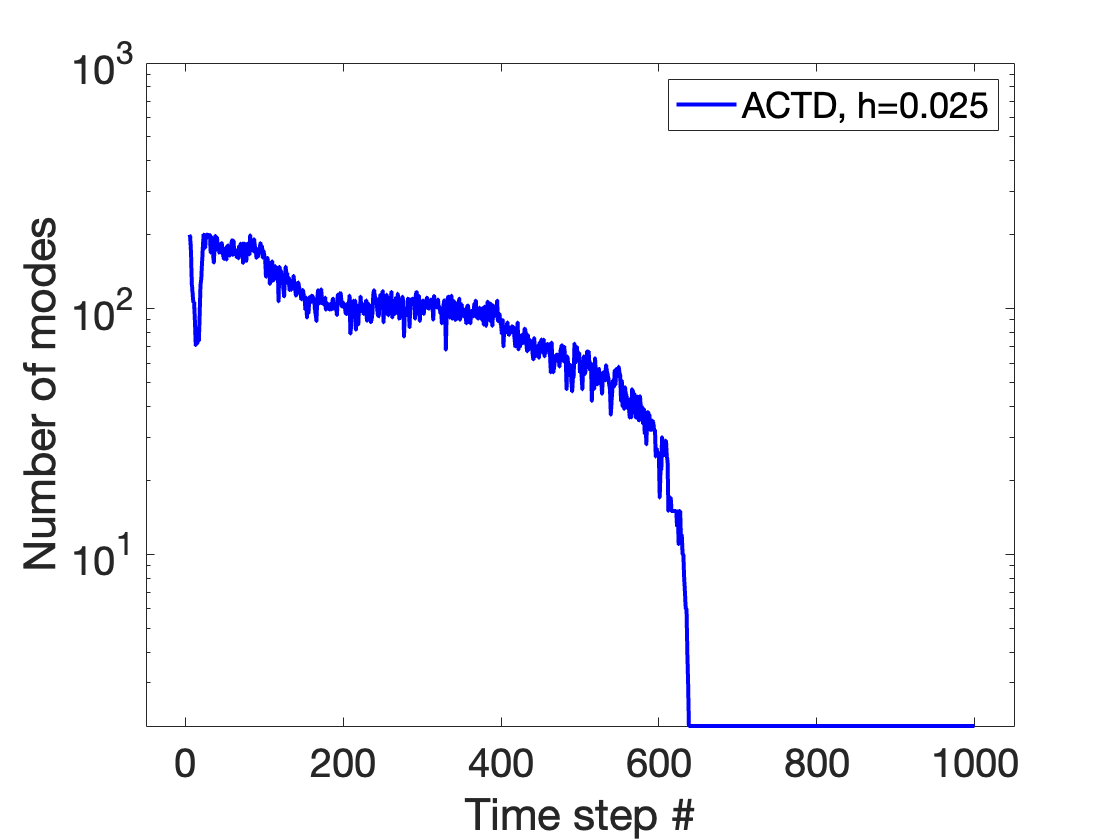}}
\caption{Computational costs of FE and ACTD in 3D}
\label{fig:cpucost3D}
\end{figure}

\subsection{Influence of different controlling parameters}
Here we investigate the influence of the different controlling parameters of the convolution approximation on the CTD solution and computational cost. We use the 3D example with the mesh size $h=h_0=0.05$ for this study. We use the ACTD solution algorithm with $M_c=200$. As a large dilation parameter is always suggested for the convolution approximation, a constant large dilation parameter $a=12$ is used. Four sets of the polynomial order $p$ and patch size $s$ are investigated:  ($p=1, s=1$), ($p=1, s=2$), ($p=2, s=2$), ($p=3, s=3$).

\figurename~\ref{fig:u3dp} illustrates the solutions obtained different controlling parameters at the time $t=0.2$. As we can see, the solutions with different parameters do not change significantly even if higher order approximations are used. This is expected, as the resolution of mesh seems good enough to produce accurate and smooth solutions with ($p=1, s=1$). For coarser meshes, the difference should be more significant. In terms of computational cost, we observed a similar trend with different parameters, as shown in \figurename~\ref{fig:cpucost3Dp}. This means the controlling parameters (even higher order approximations) can be freely chosen without changing significantly the cost. This is a unique advantage of the CTD method, due to the decomposition of the 3D problem and the use of convolution approximation. Hence, we can confirm the great potential of the CTD method for achieving high order approximations while keeping affordable costs,  {although high order derivatives are not required in the formulation used for solving the AC equation.}  

 {We remark that  in general the computational cost should be higher for high order approximations, because the final stiffness matrix \eqref{eq:ux} for CTD would become very dense with high order approximations, similarly to FE. However, the size of the stiffness matrix is very small for CTD and therefore the reduced sparsity of the stiffness matrix does not have a significant effect in the examples.}

\begin{figure}[htbp]
\centering
\subfigure[$p=1, s=1$]{\includegraphics[scale=0.11]{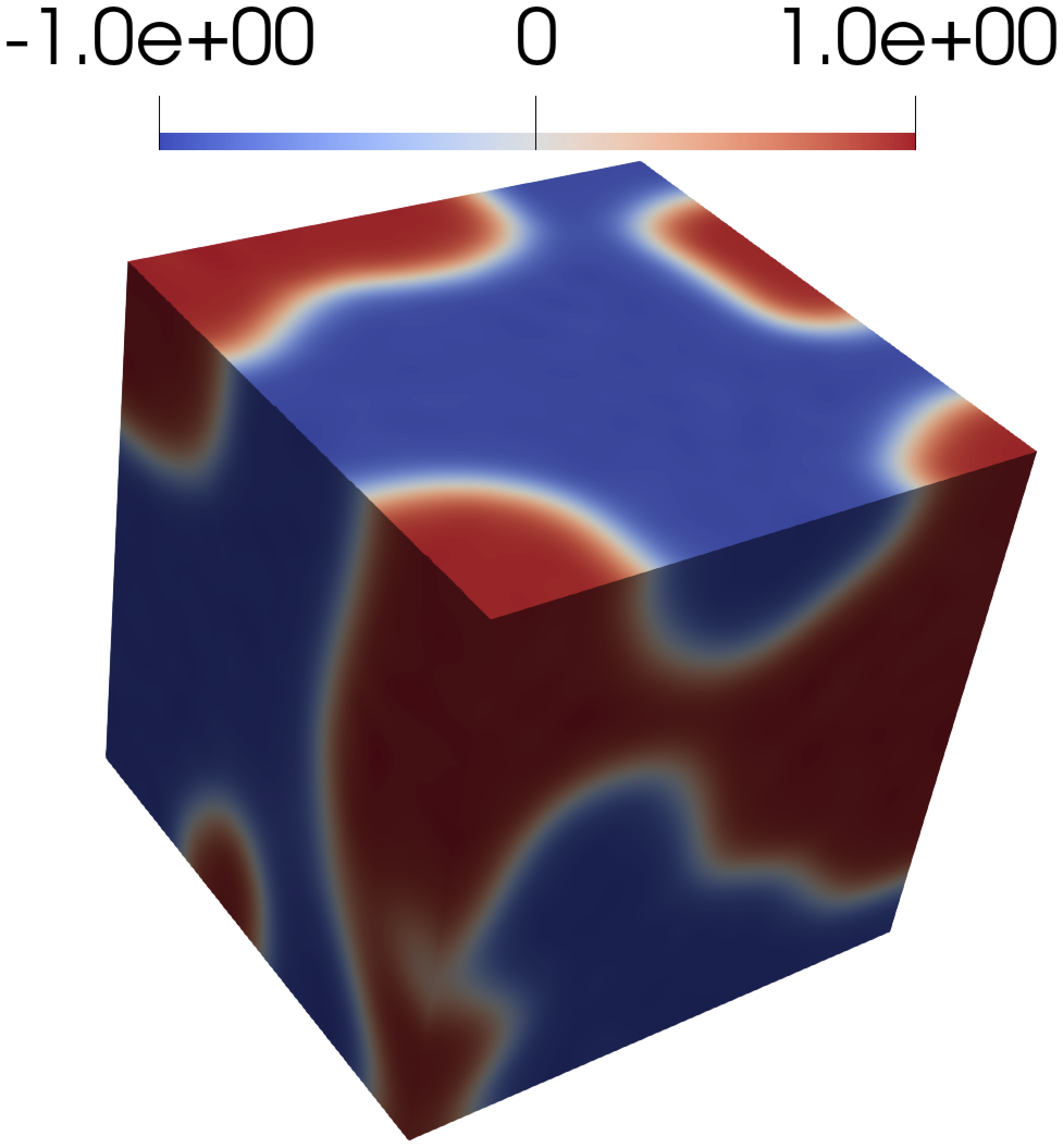}}
\subfigure[$p=1, s=2$]{\includegraphics[scale=0.11]{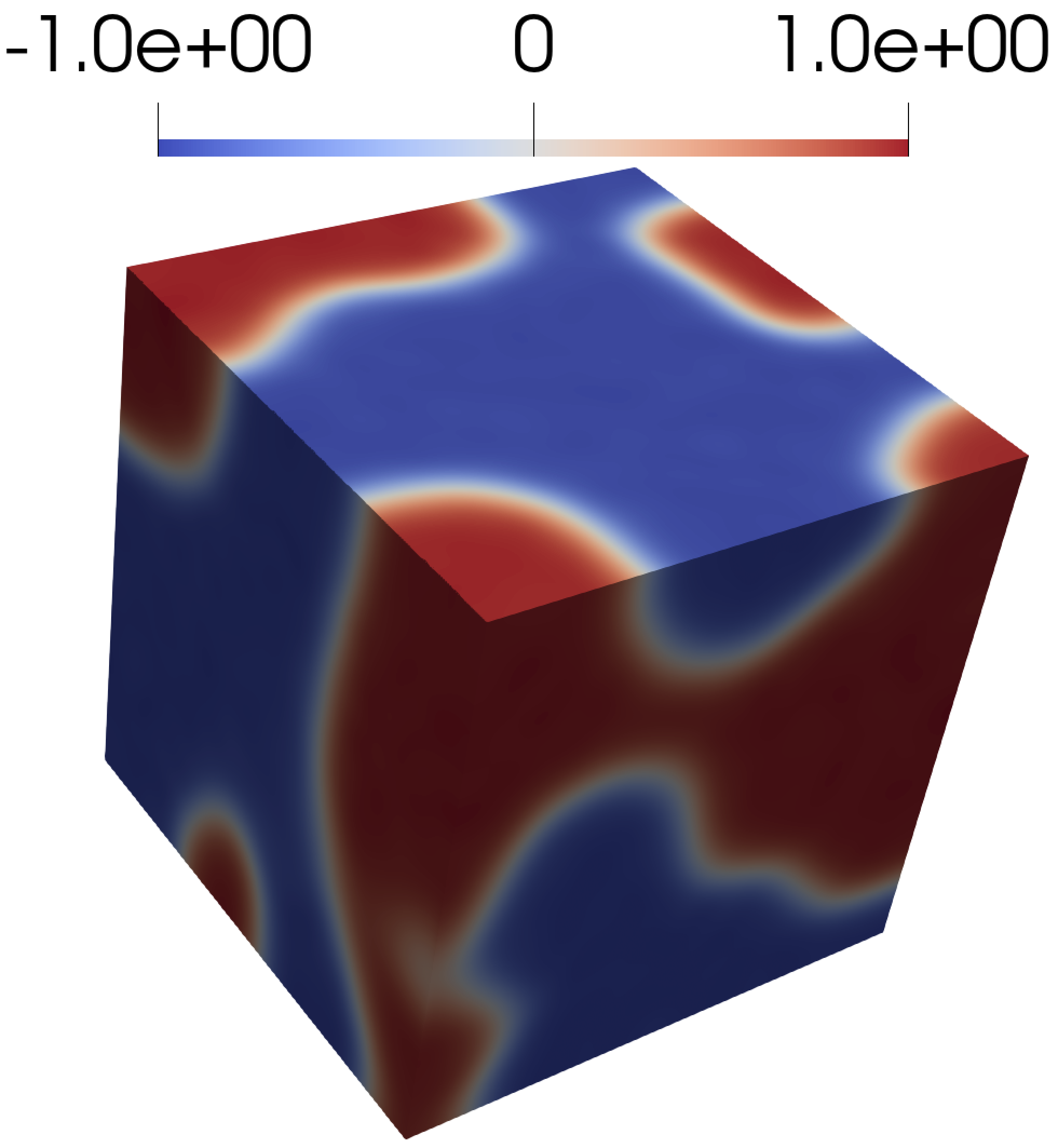}}
\subfigure[$p=2, s=2$ ]{\includegraphics[scale=0.11]{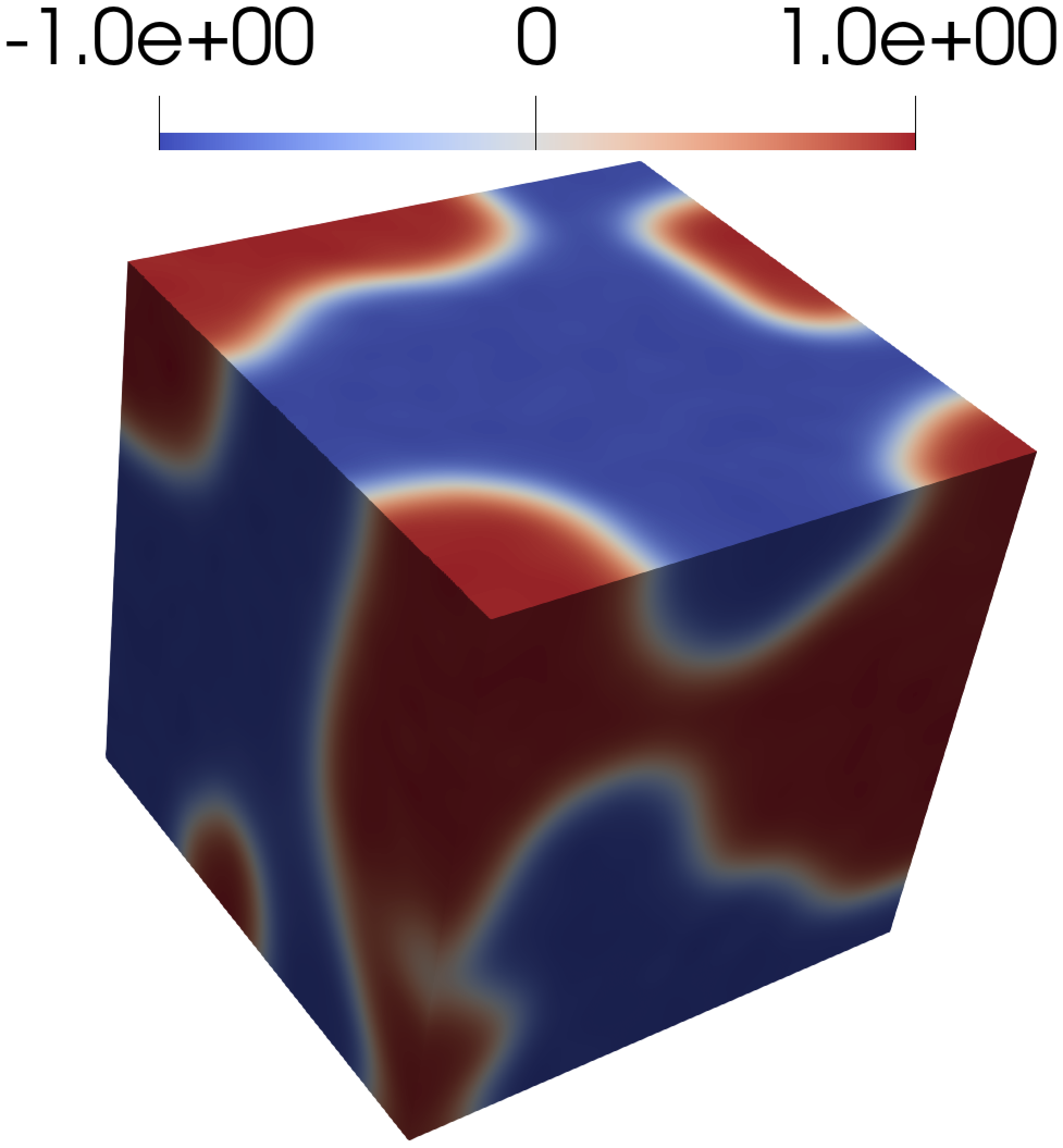}}
\subfigure[$p=3, s=3$ ]{\includegraphics[scale=0.11]{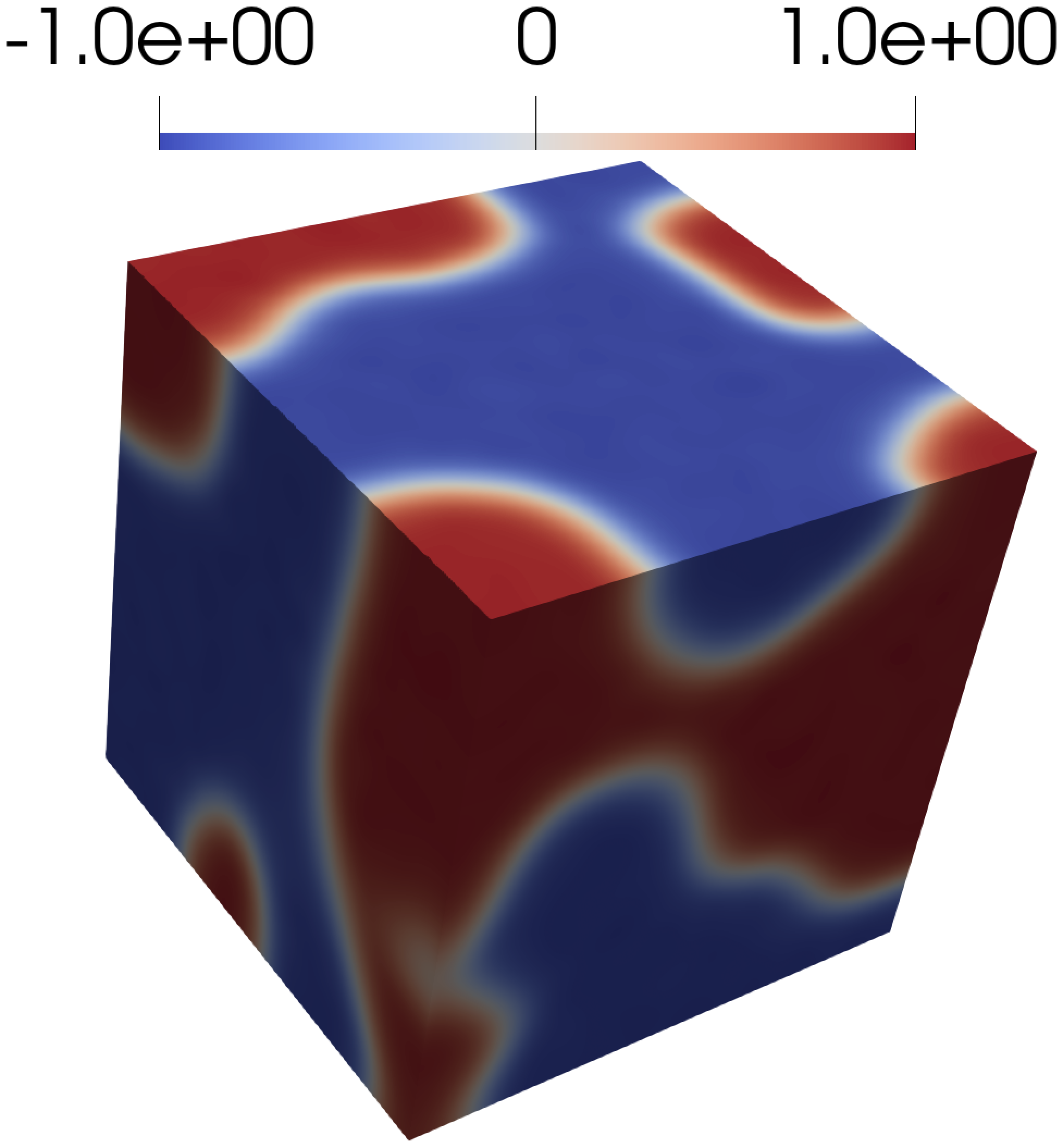}}
\caption{ACTD solutions with different controlling parameters at $t=0.2$ }
\label{fig:u3dp}
\end{figure}

\begin{figure}[htbp]
\centering
\subfigure[Mesh $h=h_0$]{\includegraphics[scale=0.135]{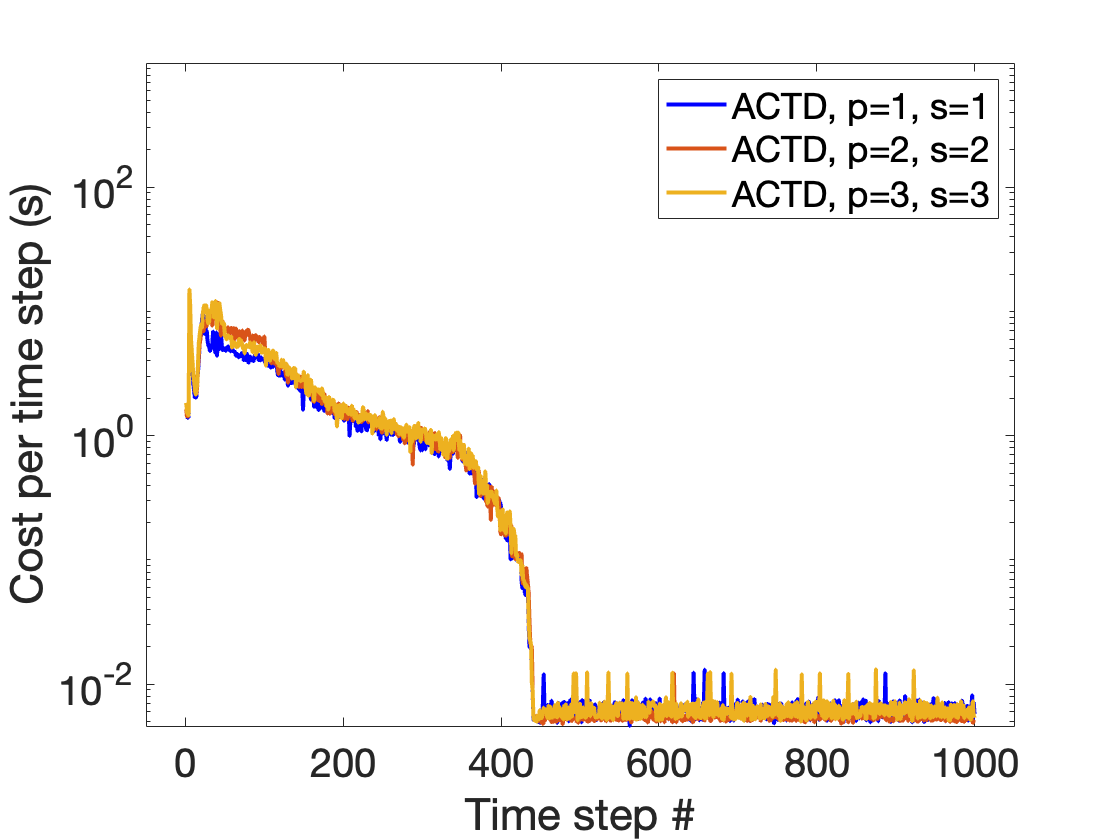}}
\subfigure[Mesh $h=h_0/1.5$]{\includegraphics[scale=0.135]{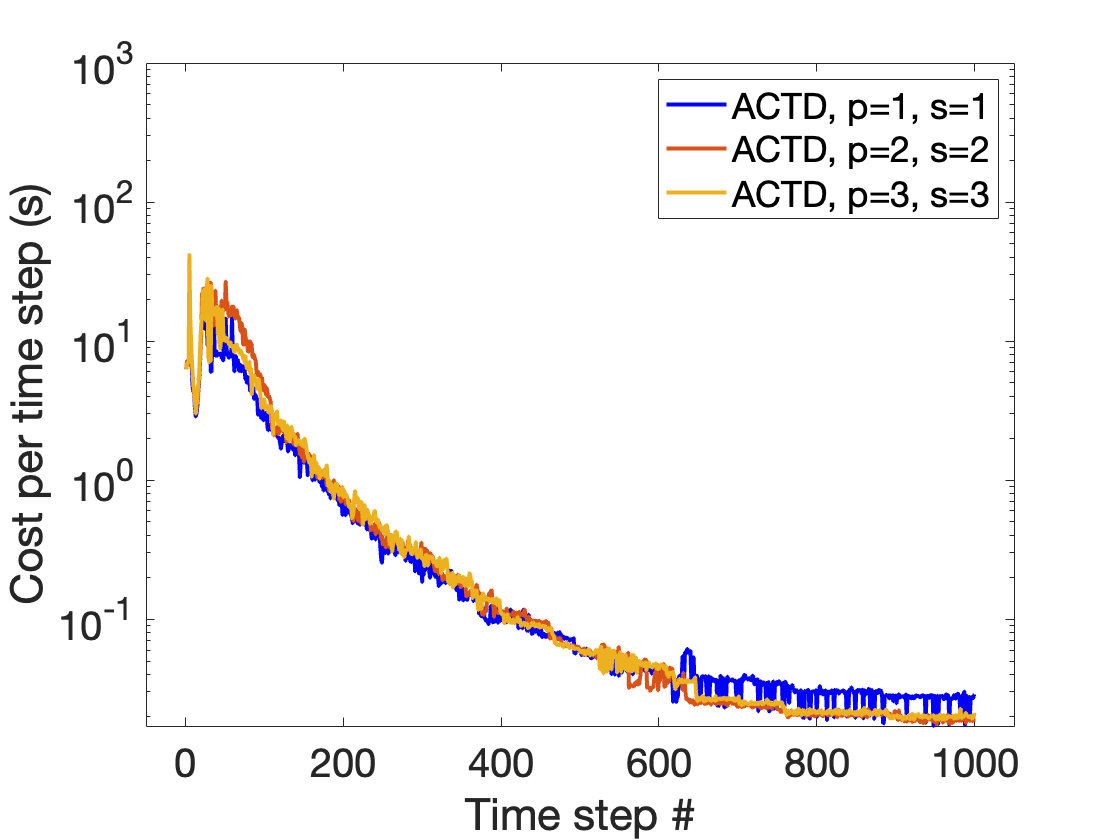}}
\caption{Computational costs of ACTD with different controlling parameters}
\label{fig:cpucost3Dp}
\end{figure}

\subsection{Discussions}
The success of the proposed CTD (including ACTD) method relies on the fact that the solutions to the AC equation can be decomposed into a small or moderate number of separated modes.  {In the presented numerical examples,} we observed that the first few time steps can require many modes to represent the solutions.  {This is due to the randomly generated initial condition that makes the solution very complex at the first time step}. Our proposed adaptive algorithm can effectively overcome this challenge. An alternative way is the extend tensor decomposition method \cite{lu2024extended}, in which local discrete enrichment is involved naturally with the decomposition to reduce the number of separated modes. This will be investigated in our future work.

Regarding the potential of CTD for high order solutions, the application to the Cahn-Hilliard (CH) equation \cite{cahn1958free} could be interesting, which is closely related to the AC equation but requires higher order derivatives. The developed CTD method can be adopted without particular difficulties.  The CTD method is expected to provide a unified and powerful solution framework for both AC and CH problems. Results on the CH equation will be presented in our future work.

\section{Conclusion}
We have developed the CTD based model reduction framework for efficiently solving  the AC equation. The method is based on the use of convolution approximation and a spatial decomposition of the solution field and enables solving a multidimensional problem through several 1D problems. Combining with the semi-implicit time integration scheme (or other appropriate energetically stable time integration), the developed model reduction method can efficiently solve the AC equation with relatively large time steps without violating the energy law. Orders-of-magnitude speedups can be expected with the method, compared to traditional FE. With the numerical examples (including 2D and 3D cases), we found speedups around 7-100$\times$.  This factor can further increase if larger meshes are used, demonstrating a great potential of the method for solving extremely high resolution problems. Furthermore, due to the use of convolution approximation, the method also enables an easy and relatively cheap way to implement high order approximations without changing the mesh. Numerical studies have confirmed this point.

 {In the future, we can compare and potentially combine the proposed method with other efficient solution methods, such as  multigrid approaches \cite{liu2022efficient,leonor2024go}.} In terms of applications, the proposed model reduction framework can be used to simulate many physical phenomena governed by the AC-type equation. An example of our ongoing work is the application to simulate the grain growth in welding and additive manufacturing materials. This framework will make the challenging large size high-resolution phase field simulations for detailed microstructure modeling computationally accessible and provide the essential tool to investigate the manufacturing processes. 

\section*{Acknowledgement}
YL and CY would like to acknowledge the support of University of Maryland Baltimore County through the startup fund and the COEIT Interdisciplinary Proposal Program Award. 

\appendix
{\section{Proof of energy stability condition}\label{apdx:proof}}
 {
\begin{thm}
The solution to the formulation \eqref{eq:AC-discrete-semi-stable} is energetically stable for arbitrary $\Delta t$, if $\alpha$ is chosen according to
\begin{equation}
%\label{eq:AC-discrete-semi-stable-step}
    \alpha \geq \frac{\max(w')}{2} {.}
\end{equation}
\end{thm}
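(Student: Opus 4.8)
The plan is to mimic the continuous energy-dissipation computation at the discrete level: test the scheme against the increment $u^{k+1}-u^{k}$, isolate the telescoping pieces that reconstruct $E(u^{k+1})-E(u^{k})$, and show the leftover terms have the right sign once $\alpha$ is large enough.

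First I would take the weak counterpart of \eqref{eq:AC-discrete-semi-stable} and choose the test function $\delta u = u^{k+1}-u^{k}$, which is admissible since $u^{k}$ and $u^{k+1}$ belong to the same finite element space. Writing $d := u^{k+1}-u^{k}$ and denoting by $\|\cdot\|$ the $L^{2}(\Omega)$ norm, this gives
\[
\left(\tfrac{1}{\Delta t}+\alpha L\right)\|d\|^{2}
+ L\!\int_{\Omega} d\,w(u^{k})\,d\Omega
+ L\kappa\!\int_{\Omega}\nabla d\cdot\nabla u^{k+1}\,d\Omega = 0 .
\]
Next I would rewrite the two remaining contributions. For the gradient term I use the algebraic identity $2\,a\cdot(a-b)=|a|^{2}-|b|^{2}+|a-b|^{2}$ with $a=\nabla u^{k+1}$, $b=\nabla u^{k}$, so that $L\kappa\int_{\Omega}\nabla d\cdot\nabla u^{k+1}=\tfrac{L\kappa}{2}\bigl(\|\nabla u^{k+1}\|^{2}-\|\nabla u^{k}\|^{2}+\|\nabla d\|^{2}\bigr)$. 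For the reaction term I apply Taylor's theorem with Lagrange remainder pointwise in $\Omega$: $\mathcal{F}(u^{k+1})=\mathcal{F}(u^{k})+w(u^{k})\,d+\tfrac12 w'(\xi)\,d^{2}$ for some $\xi=\xi(x)$ between $u^{k}(x)$ and $u^{k+1}(x)$, which yields $\int_{\Omega} d\,w(u^{k})=\int_{\Omega}\bigl(\mathcal{F}(u^{k+1})-\mathcal{F}(u^{k})\bigr)-\tfrac12\int_{\Omega}w'(\xi)\,d^{2}$.

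Substituting both expressions and recognizing that $\int_{\Omega}\bigl(\mathcal{F}(u^{k+1})-\mathcal{F}(u^{k})\bigr)+\tfrac{\kappa}{2}\bigl(\|\nabla u^{k+1}\|^{2}-\|\nabla u^{k}\|^{2}\bigr)=E(u^{k+1})-E(u^{k})$, I obtain
\[
L\bigl(E(u^{k+1})-E(u^{k})\bigr)
= -\left(\tfrac{1}{\Delta t}+\alpha L\right)\|d\|^{2}
+ \tfrac{L}{2}\!\int_{\Omega}w'(\xi)\,d^{2}\,d\Omega
- \tfrac{L\kappa}{2}\|\nabla d\|^{2}.
\]
Finally I would bound $\int_{\Omega}w'(\xi)\,d^{2}\le \max(w')\,\|d\|^{2}$, so the right-hand side is dominated by $-\bigl(\tfrac{1}{\Delta t}+\alpha L-\tfrac{L}{2}\max(w')\bigr)\|d\|^{2}-\tfrac{L\kappa}{2}\|\nabla d\|^{2}$. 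Since $L,\kappa>0$ and $\tfrac{1}{\Delta t}>0$, choosing $\alpha\ge \max(w')/2$ makes both coefficients nonnegative, hence $E(u^{k+1})-E(u^{k})\le 0$; crucially the $\tfrac{1}{\Delta t}$ term is not needed for this, which is precisely why the estimate holds for arbitrary $\Delta t$.

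The step I expect to be the main obstacle is the handling of the nonlinearity: one must ensure that the pointwise intermediate value $\xi(x)$ produced by Taylor's theorem stays in the range over which $w'$ is controlled by $\max(w')$ (here, the range of attained solution values, as in \cite{shen2010numerical}), and, if one insists on the fully quadrature-discretized statement, that the Taylor identity survives the numerical integration of the energy. In practice the argument is carried out at the semi-discrete level with exact integration of $E$, which is the convention adopted in the paper.
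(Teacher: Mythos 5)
Your proposal is correct and follows essentially the same route as the paper's proof in \ref{apdx:proof}: test with $\delta u = u^{k+1}-u^{k}$, split the gradient term with the polarization identity, expand $\mathcal{F}$ by Taylor's theorem to reconstruct $E(u^{k+1})-E(u^{k})$, and absorb the remainder using $\alpha \ge \max(w')/2$, noting that the $1/\Delta t$ term is not needed. Your use of the Lagrange remainder $w'(\xi)$ is in fact slightly more careful than the paper's $w'(u^{k})$, but the bound by $\max(w')$ makes the two arguments identical in substance.
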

\begin{proof}
The proof follows the original work of \cite{shen2010numerical}. Let us rewrite the weak form \eqref{eq:AC-weak} using the stabilized semi-implicit formulation
\begin{equation}
\label{eq:AC-semi-stable}
    \int_\Omega  \delta u \ (u^{k+1}-u^{k})(\frac{1}{\Delta t}+\alpha L)\ d \Omega + \int_\Omega \delta u \  L w({u}^{k}) \ d \Omega +\int_\Omega  \nabla\delta u\cdot  L\kappa \nabla{u}^{k+1} \ d \Omega =0  {.}
\end{equation}
Considering that $\delta u =u^{k+1}-u^{k}$, we have 
\begin{equation}
    \label{eq:AC-semi-du}
    \int_\Omega \ (u^{k+1}-u^{k})^2(\frac{1}{\Delta t}+\alpha L)\ d \Omega + \int_\Omega (u^{k+1}-u^{k}) \  L w({u}^{k}) \ d \Omega +\int_\Omega  (\nabla u^{k+1}- \nabla u^{k})\cdot  L\kappa \nabla{u}^{k+1} \ d \Omega =0  {.}
\end{equation}
The last term of the above equation can be written as
\begin{equation}
    \begin{aligned}  
    \int_\Omega  (\nabla u^{k+1}- \nabla u^{k})\cdot  L\kappa \nabla{u}^{k+1} \ d \Omega\ = &\quad\frac{1}{2}\int_\Omega  (\nabla u^{k+1}- \nabla u^{k})^2  L\kappa \ d \Omega \\
    &+\frac{1}{2} \int_\Omega  (\nabla u^{k+1})^2  L\kappa \ d \Omega\\
    &-\frac{1}{2} \int_\Omega  (\nabla u^{k})^2  L\kappa \ d \Omega {.}
    \end{aligned}
\end{equation}
Additionally, using Taylor expansion, we have
\begin{equation}
    \begin{aligned}  
    \int_\Omega (u^{k+1}-u^{k}) \  L w({u}^{k}) \ d \Omega\ = &\quad\int_\Omega  L\mathcal{F}(u^{k+1})    \ d \Omega \\
    &-\int_\Omega  L\mathcal{F}(u^{k})   \ d \Omega\\
    &-\frac{1}{2} \int_\Omega  (u^{k+1}-u^{k})^2  L w'({u}^{k}) \ d \Omega {.}
    \end{aligned}
\end{equation}
Therefore, Eq. \eqref{eq:AC-semi-du} becomes
\begin{equation}
    \label{eq:AC-semi-expan}
    \begin{aligned} 
    \int_\Omega \ (u^{k+1}-u^{k})^2(\frac{1}{\Delta t}+\alpha L)\ d \Omega\ &+ \int_\Omega  L\mathcal{F}(u^{k+1})    \ d \Omega \\
    &-\int_\Omega  L\mathcal{F}(u^{k})   \ d \Omega\\
    &-\frac{1}{2} \int_\Omega  (u^{k+1}-u^{k})^2  L w'({u}^{k}) \ d \Omega\\
    &+\frac{1}{2}\int_\Omega  (\nabla u^{k+1}- \nabla u^{k})^2  L\kappa \ d \Omega \\
    &+\frac{1}{2} \int_\Omega  (\nabla u^{k+1})^2  L\kappa \ d \Omega\\
    &-\frac{1}{2} \int_\Omega  (\nabla u^{k})^2  L\kappa \ d \Omega\\
    &=0 {.}
    \end{aligned}
\end{equation}
Rearrange the above equation and consider that $E =\int_\Omega (\mathcal{F}+\frac{1}{2} \kappa (\nabla u)^2)\ d \Omega$, then
\begin{equation}
    \begin{aligned} 
    L E (u^{k+1})-L E (u^{k})=&-\int_\Omega \ (u^{k+1}-u^{k})^2(\frac{1}{\Delta t}+\alpha L)\ d \Omega \\
    &+\frac{1}{2} \int_\Omega  (u^{k+1}-u^{k})^2  L w'({u}^{k}) \ d \Omega\\
    &-\frac{1}{2}\int_\Omega  (\nabla u^{k+1}- \nabla u^{k})^2  L\kappa \ d \Omega {.}
    \end{aligned}
\end{equation}
Therefore,
\begin{equation}
    \begin{aligned} 
    L E (u^{k+1})-L E (u^{k})\leq\int_\Omega \ (u^{k+1}-u^{k})^2(\frac{1}{2} L w'({u}^{k})-\frac{1}{\Delta t}-\alpha L)\ d \Omega  {.}
    \end{aligned}
\end{equation}
Since $L>0$, a sufficient condition for $E (u^{k+1})-E (u^{k})\leq 0$ is that
\begin{equation}
\label{eq:stable-condtion}
    \begin{aligned} 
   \frac{1}{2} L w'({u}^{k})-\frac{1}{\Delta t}-\alpha L\leq 0  {.}
    \end{aligned}
\end{equation}
The above condition holds for arbitrary $\Delta t$, if
\begin{equation}
    \begin{aligned} 
   \frac{1}{2} L w'({u}^{k})-\alpha L\leq 0  {.}
    \end{aligned}
\end{equation}
Hence, we only need
\begin{equation}
    \begin{aligned} 
   \alpha \geq \frac{1}{2} \max (w') {,}
    \end{aligned}
\end{equation}
which is Eq. \eqref{eq:AC-discrete-semi-stable-step}. Furthermore, if we consider $\alpha=0$ in \eqref{eq:stable-condtion}, it gives the condition \eqref{eq:AC-discrete-semi-step} for the original semi-implicit scheme \eqref{eq:AC-discrete-semi}. 
\end{proof}
}

{\section{Radial basis interpolation}\label{apdx:radialbasis}}
Let us consider the following 1D case for illustration purposes
\begin{equation}
    \begin{aligned}           
   {u^{\text{CFE}}}({\xi})=&\sum_{i\in A^e}{N}_{{i}}({\xi})\underbrace{\sum_{j\in A^i_s} {W}^{{\xi}_i}_{a,j} ({\xi}) u_j}_{u^i({\xi})} {,}\\
    \end{aligned}
\end{equation}
 {where $u^{i}({\xi})$ is the part of interpolation done by the patch function $W$ centering around the $i$-th node with $i\in A^e$. We can now focus on $u^{i}({\xi})$}
\begin{equation}
\label{eq:radialbasis}     
    u^{i}({\xi})=\sum_{j\in A^i_s} {W}^{{\xi}_i}_{a,j} ({\xi}) u_j,
\end{equation}
where the supporting node set of $W$ is $A^i_s$ with a given patch size $s$.  \figurename~\ref{fig:1DCFEMnodes} illustrates a 1D  {CFE} element with {the} patch size $s=1$ and the two-node shape functions for $N_i$. Now the question is how to compute  $W$ based on the given supporting nodes.

\begin{figure}[!htbp]
\centering
\includegraphics[scale=0.5]{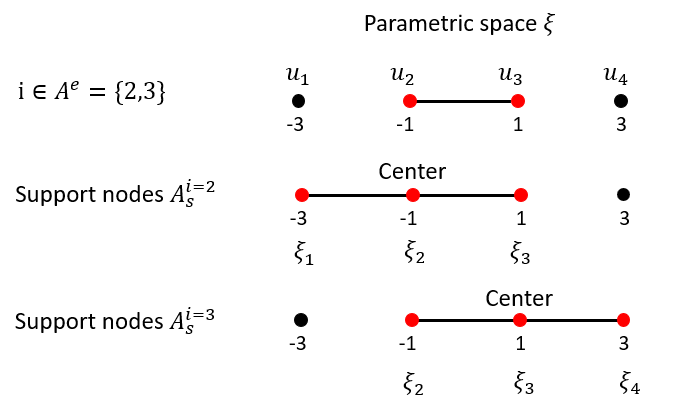}
\caption{Supporting nodes for a 1D convolution element with {the} patch size $s=1$ }
\label{fig:1DCFEMnodes}
\end{figure}

Assuming the nodal solution value for the 4 nodes in \figurename~\ref{fig:1DCFEMnodes} is $[u_1,u_2,u_3,u_4]$, we illustrate the radial basis interpolation procedure for the part centering around $i=2$. In this case, the parametric coordinates for the support nodes are $\{-3,-1,1\}$. Then we can consider the radial basis interpolation $u^{i=2}({\xi})$ has the following form
\begin{equation}      
    u^{i}({\xi})=\mathbf{\Psi}_a (\xi)\mathbf{k}+\mathbf{p}(\xi)\mathbf{l},
\end{equation}
where $\mathbf{\Psi}_a (\xi) $ is  a defined kernel function, which can be the reproducing kernel  or cubic spline kernel \cite{liu1995reproducing,chen2017reproducing} with the dilation parameter $a$, $\mathbf{p}(\xi)$ is the polynomial basis vector of {order $p$}, $\mathbf{k}=[k_1, k_2, k_3]^T$ and {$\mathbf{l}$ is the coefficient vector that helps to enforce the reproducing condition and the Kronecker delta property. Therefore, the size of $\mathbf{l}$ depends on the order of $\mathbf{p}$. In the case of a second order polynomial shown below,  $\mathbf{l}=[l_1, l_2, l_3]^T$.}  We give here {the} specific example for $\mathbf{\Psi}_a $  and $\mathbf{p}(\xi)$ using a cubic spline {kernel} and {the} second-order polynomial, {in which we can see that $a$ is  the dilation parameter that controls the window size (non-zero domain) of the cubic spline}
\begin{equation}      
\begin{aligned} 
   \mathbf{\Psi}_a (\xi)=[&{\Psi}_a (\xi-\xi_1),\  {\Psi}_a (\xi-\xi_2) , \ {\Psi}_a (\xi-\xi_3)] {,}\\
   \text{where}\quad {\Psi}_a (\xi-\xi_I): &= {\Psi}_a (z) \quad \text{with} \quad z=\frac{|\xi-\xi_I|}{a} {,}\\
   {\Psi}_a (z)&=\begin{cases}
    \frac{2}{3} -4z^2+4z^3 \quad \forall z \in [0, \frac{1}{2}]\\
    \frac{4}{3} -4z+4z^2- \frac{4}{3}z^3 \quad \forall z \in [\frac{1}{2}, 1]\\
    0 \quad \forall z \in (1,+\infty)
   \end{cases},
\end{aligned}
\end{equation}
and 
\begin{equation}      
\begin{aligned} 
   \mathbf{p}=[1,\ \xi,\ \xi^2] {.}
\end{aligned}
\end{equation}
Now we can compute $\mathbf{k}$ and $\mathbf{l}$ by enforcing the {conditions below} 
\begin{equation}   
\label{eq:reproducecondition}
\begin{aligned} 
\begin{cases}
    u^{i}({\xi_1})= u_1 \\
     u^{i}({\xi_2})= u_2 \\
    u^{i}({\xi_3})= u_3 \\
     {\sum_i {k}_i= 0}\\
     [\xi_1, \xi_2, \xi_3]\ \mathbf{k}=0\\
      [\xi_1^2, \xi_2^2, \xi_3^2]\ \mathbf{k}=0\\
   \end{cases}.
\end{aligned}
\end{equation}
Solving the above equations gives the solution to $\mathbf{k}$ and $\mathbf{l}$, which reads
\begin{equation}      
\begin{aligned} 
\begin{cases}
    \mathbf{k}=\mathbf{K}\boldsymbol{u}\\
    \mathbf{l}=\mathbf{L}\boldsymbol{u}\\
   \end{cases},
\end{aligned}
\end{equation}
with 
\begin{equation}      
\begin{aligned} 
\begin{cases}
    \boldsymbol{u}=[u_1,\ u_2,\ u_3]^T\\
    \mathbf{L}=(\mathbf{P}^T\mathbf{R}_0^{-1}\mathbf{P})^{-1}\mathbf{P}^T\mathbf{R}_0^{-1}\\
    \mathbf{K}=\mathbf{R}_0^{-1}(\mathbf{I}-\mathbf{P}\mathbf{L})
   \end{cases},
\end{aligned}
\end{equation}
and 
\begin{equation}      
\label{eq:moment}
\begin{cases}
    \mathbf{R}_0=\begin{pmatrix}
     \mathbf{\Psi}_a (\xi_1)\\
      \mathbf{\Psi}_a (\xi_2)\\
       \mathbf{\Psi}_a (\xi_3)
    \end{pmatrix}=\begin{pmatrix}
     {\Psi}_a (\xi_1-\xi_1) &\  {\Psi}_a (\xi_1-\xi_2) &\  {\Psi}_a (\xi_1-\xi_3)\\
      {\Psi}_a (\xi_2-\xi_1) &\  {\Psi}_a (\xi_2-\xi_2) &\  {\Psi}_a (\xi_2-\xi_3)\\
       {\Psi}_a (\xi_3-\xi_1) &\  {\Psi}_a (\xi_3-\xi_2) &\  {\Psi}_a (\xi_3-\xi_3)
    \end{pmatrix}\\
    \\
    \mathbf{P}=\begin{pmatrix}
     \mathbf{p} (\xi_1)\\
      \mathbf{p} (\xi_2)\\
       \mathbf{p} (\xi_3)
    \end{pmatrix}=\begin{pmatrix}
     1 &\  \xi_1 &\  \xi_1^2\\
       1 &\  \xi_2 &\  \xi_2^2\\
        1 &\  \xi_3 &\  \xi_3^2\\
    \end{pmatrix}\\
   \end{cases},
\end{equation}
Finally, the radial basis interpolation with the computed coefficients reads
\begin{equation}      
  \begin{aligned}
      u^{i}({\xi}) = \mathbf{\Psi}_a (\xi)\mathbf{k}+\mathbf{p}(\xi)\mathbf{l} &= \mathbf{\Psi}_a (\xi)\mathbf{K}\boldsymbol{u}+\mathbf{p}(\xi)\mathbf{L}\boldsymbol{u}\\
      &= (\mathbf{\Psi}_a (\xi)\mathbf{K}+\mathbf{p}(\xi)\mathbf{L})\boldsymbol{u}\\
      &= \boldsymbol{W}(\xi)\boldsymbol{u}\\
      &= {W}^{{\xi}_i}_{a,1}(\xi) u_1+{W}^{{\xi}_i}_{a,2} (\xi)u_2+{W}^{{\xi}_i}_{a,3}(\xi) u_3\\
      &= \sum_{j\in A^i_s} {W}^{{\xi}_i}_{a,j} ({\xi}) u_j ,  
  \end{aligned}
\end{equation}
where ${W}^{{\xi}_i}_{a,j}$ is obtained by identifying the corresponding coefficient of $u_j$. By analogy, we can compute the other convolution patch functions $W$ with the support $A^{i=3}_s$. {The} detailed mathematical derivation and analysis of the radial basis interpolation can be found in \cite{schaback2001characterization}.

The generalization of {the} above procedure to 2D cases is straightforward. {By} assuming $K$ is the number of nodes in the support $A^{i}_s$, the 2D cubic spline kernel $\mathbf{\Psi}_a (\boldsymbol{\xi})$  and second-order polynomial $\mathbf{p}(\boldsymbol{\xi})$ can be defined as
\begin{equation}      
\begin{aligned} 
   &\mathbf{\Psi}_a (\boldsymbol{\xi}) =[{\Psi}_a (\boldsymbol{\xi}-\boldsymbol{\xi}_1),\  {\Psi}_a (\boldsymbol{\xi}-\boldsymbol{\xi}_2),\dots , \ {\Psi}_a (\boldsymbol{\xi}-\boldsymbol{\xi}_K)] {,}\\
   \text{with}\quad &{\Psi}_a (\boldsymbol{\xi}-\boldsymbol{\xi}_I): ={\Psi}_a (\xi-\xi_I){\Psi}_a (\eta-\eta_I) {,}\\
\end{aligned}
\end{equation}
and 
\begin{equation}      
\begin{aligned} 
   \mathbf{p}=[1,\ \xi,\ \xi^2,\ \eta,\ \xi\eta,\ \eta^2] {.}
\end{aligned}
\end{equation}
The remaining equations from \eqref{eq:reproducecondition} to \eqref{eq:moment} can be adapted accordingly. 

{
\section{{Illustration of the 1D convolution shape functions}}
\label{apdx:1DCFEshapefunction}
For a better understanding of the convolution shape function $\tilde{{N}}_k$, we illustrate here a 1D convolution approximation with the patch size $s=1$. Specifically, we use the example of \figurename~\ref{fig:1DCFEMnodes} for the supporting nodes. The convolution patch function ${W}^{{\xi}_i}_{a,j}$ can be precomputed using the procedure described in \ref{apdx:radialbasis}. Recall that the general 1D convolution approximation is written as
\begin{equation}
\label{eq:CFEM-1D}       
    u^{\text{CFE}}({\xi})=\sum_{i\in A^e}{N}_{{i}}({\xi})\sum_{j\in A^i_s} {W}^{{\xi}_i}_{a,j} ({\xi}) u_j {.}
\end{equation}
In this specific example, the FE shape function nodal support set is $A^e=\{2,3\}$, and the nodal patch is $A^{i=2}_s=\{1,2,3\}$, $A^{i=3}_s=\{2,3,4\}$. Eq. \eqref{eq:CFEM-1D} then becomes 
\begin{equation}
    \begin{aligned}           
    u^{\text{CFE}}({\xi}) & = \sum_{i\in A^e}{N}_{{i}}({\xi})\sum_{j\in A^i_s} {W}^{{\xi}_i}_{a,j} ({\xi})\ u_j\\
    & = {N}_{{2}}{W}^{{\xi}_2}_{a,1}\ u_1+({N}_{{2}}{W}^{{\xi}_2}_{a,2}+{N}_{{3}}{W}^{{\xi}_3}_{a,2})\ u_2\\
    &+({N}_{{2}}{W}^{{\xi}_2}_{a,3}+{N}_{{3}}{W}^{{\xi}_3}_{a,3})\ u_3+{N}_{{3}}{W}^{{\xi}_3}_{a,4}\ u_4 \\
    & = \sum_{k\in A^e_s} \tilde{{N}}_k({\xi})\ u_k,
    \end{aligned}
\end{equation}
where $A^e_s=\underset{i\in A^e}{\bigcup} A^{i}_s =\{1,2,3,4\}$. Therefore, there are in total 4 convolution shape functions for $s=1$. If $s=2$, we can expect $6$ shape functions, as shown in \figurename~\ref{fig:1Dshape} and \ref{fig:CFEshape-global}. More details about how to deal with  elements close to the boundary and irregular meshes can be found in \cite{lu2023convolution}.

\section{{Derivation of the discretized  formulation for the CTD solution}}
\label{apdx:CTDderivation}
Let us consider the following weak form problem in 2D with $\Omega = \Omega_x \times \Omega_y$
\begin{equation}
\label{eq:CTDappend}
    \int_\Omega  \delta u \ (u^{k+1}-u^{k})(\frac{1}{\Delta t}+\alpha L)\ d \Omega + \int_\Omega \delta u \  L w({u}^{k}) \ d \Omega +\int_\Omega  \nabla\delta u\cdot  L\kappa \nabla{u}^{k+1} \ d \Omega =0  {.}
\end{equation}
The decomposition of solution leads to 
\begin{equation}
\begin{aligned}
        u^{k+1}&=\sum_{m=1}^{M-1} u^{(m)}_x u^{(m)}_y +u_x u_y {,}\\ 
        \delta u&=\delta u_x u_y +  u_x \delta u_y {,}\\ 
    u^{k}&=\sum_{m=1}^{M_{k}} u^{(m,k)}_x u^{(m,k)}_y  {,}
\end{aligned}
\end{equation}
where $u^{(m,k)}_x$ and $u^{(m,k)}_y $ are the CTD solution at the previous time step, $u^{(m)}_x$ and $u^{(m)}_y $, $\forall m\leq M-1$, are assumed known by previous computations. The $M$-th mode $u_x, u_y$ can be computed alternatively by assuming one of them is fixed. 

Starting from computing $u_x$, we consider now $u_y$ is a constant function and known by the initial guess or previous computations, then $\delta u_y=0$ and $\delta u=\delta u_x u_y$, the weak form problem \eqref{eq:CTDappend} becomes
\begin{equation}
\begin{aligned}
    \int_\Omega  \delta u_x u_y\ &(\sum_{m=1}^{M-1} u^{(m)}_x u^{(m)}_y +u_x u_y-\sum_{m=1}^{M_{k}} u^{(m,k)}_x u^{(m,k)}_y )(\frac{1}{\Delta t}+\alpha L)\ d \Omega \\
    &+ \int_\Omega \delta u_x u_y \  L w({u}^{k}) \ d \Omega +\int_\Omega  \nabla(\delta u_x u_y)\cdot  L\kappa \nabla(\sum_{m=1}^{M-1} u^{(m)}_x u^{(m)}_y +u_x u_y) \ d \Omega =0    {.}
\end{aligned}
\end{equation}
By rearrangement and considering $\int_\Omega = \int_{\Omega_x} \int_{\Omega_y}$, we have
\begin{equation}
\begin{aligned}
   (\frac{1}{\Delta t}+\alpha L)&\sum_{m=1}^{M-1}  \int_{\Omega_x}  \delta u_x u^{(m)}_x\ dx  \int_{\Omega_y}   u_y u^{(m)}_y\ dy  +  (\frac{1}{\Delta t}+\alpha L)\int_{\Omega_x}  \delta u_x u_x\ dx \int_{\Omega_y}   u_y  u_y\ dy \\
   &-(\frac{1}{\Delta t}+\alpha L)\sum_{m=1}^{M_{k}}  \int_{\Omega_x}  \delta u_x u^{(m,k)}_x \ dx \int_{\Omega_y}   u_y u^{(m,k)}_y \ dy\\
    &+ \int_\Omega \delta u_x u_y \  L w({u}^{k}) \ d \Omega\\
    &+L\kappa \sum_{m=1}^{M-1}\int_{\Omega_x}  \frac{\partial\delta u_x}{\partial x} \frac{\partial u^{(m)}_x}{\partial x}\ dx \int_{\Omega_y} u_y  u^{(m)}_y\ dy\\
    &+ L\kappa \sum_{m=1}^{M-1}\int_{\Omega_x}  \delta u_x  u^{(m)}_x \ dx \int_{\Omega_y} \frac{\partial u_y}{\partial y} \frac{\partial u^{(m)}_y}{\partial y} \ dy\\
    &+L\kappa \int_{\Omega_x}  \frac{\partial\delta u_x}{\partial x} \frac{\partial u_x}{\partial x}\ dx \int_{\Omega_y} u_y  u_y\ dy\\ 
    &+ L\kappa \int_{\Omega_x}  \delta u_x  u_x \ dx \int_{\Omega_y} \frac{\partial u_y}{\partial y} \frac{\partial u_y}{\partial y} \ dy\\
    &=0  {.}   
\end{aligned}
\end{equation}
Considering the convolution approximation \eqref{eq:cfeshape1D} and let $\Tilde{\boldsymbol{B}}_x=\frac{d\Tilde{\boldsymbol{N}}_x}{dx}$ and $\Tilde{\boldsymbol{B}}_y=\frac{d\Tilde{\boldsymbol{N}}_y}{dy}$, the above equation becomes
\begin{equation}
\begin{aligned}
   (\frac{1}{\Delta t}+\alpha L)&\sum_{m=1}^{M-1}   \delta {\boldsymbol{u}}^T_x\int_{\Omega_x}  \Tilde{\boldsymbol{N}}^T_x \Tilde{\boldsymbol{N}}_x dx\ \boldsymbol{u}^{(m)}_x {\boldsymbol{u}}^T_y \int_{\Omega_y}   \Tilde{\boldsymbol{N}}^T_y \Tilde{\boldsymbol{N}}_y  dy\  \boldsymbol{u}^{(m)}_y \\
   &+  (\frac{1}{\Delta t}+\alpha L) \delta {\boldsymbol{u}}^T_x\int_{\Omega_x}  \Tilde{\boldsymbol{N}}^T_x \Tilde{\boldsymbol{N}}_x dx \ \boldsymbol{u}_x {\boldsymbol{u}}^T_y \int_{\Omega_y}   \Tilde{\boldsymbol{N}}^T_y \Tilde{\boldsymbol{N}}_y  dy\  \boldsymbol{u}_y  \\
   &-(\frac{1}{\Delta t}+\alpha L)\sum_{m=1}^{M_{k}}  \delta {\boldsymbol{u}}^T_x\int_{\Omega_x}  \Tilde{\boldsymbol{N}}^T_x \Tilde{\boldsymbol{N}}_x dx \ \boldsymbol{u}^{(m,k)}_x {\boldsymbol{u}}^T_y \int_{\Omega_y}   \Tilde{\boldsymbol{N}}^T_y \Tilde{\boldsymbol{N}}_y  dy\  \boldsymbol{u}^{(m,k)}_y \\
    &+ \delta {\boldsymbol{u}}^T_x \int_\Omega \Tilde{\boldsymbol{N}}^T_x \Tilde{\boldsymbol{N}}_y\  L w({u}^{k}) \ dx dy \ {\boldsymbol{u}}_y\\
    &+L\kappa \sum_{m=1}^{M-1}\delta {\boldsymbol{u}}^T_x\int_{\Omega_x}  \Tilde{\boldsymbol{B}}^T_x \Tilde{\boldsymbol{B}}_x dx \ \boldsymbol{u}^{(m)}_x {\boldsymbol{u}}^T_y \int_{\Omega_y}   \Tilde{\boldsymbol{N}}^T_y \Tilde{\boldsymbol{N}}_y  dy\  \boldsymbol{u}^{(m)}_y \\
    &+ L\kappa \sum_{m=1}^{M-1}\int_{\Omega_x}  \delta {\boldsymbol{u}}^T_x\int_{\Omega_x}  \Tilde{\boldsymbol{N}}^T_x \Tilde{\boldsymbol{N}}_x dx \ \boldsymbol{u}^{(m)}_x {\boldsymbol{u}}^T_y \int_{\Omega_y}   \Tilde{\boldsymbol{B}}^T_y \Tilde{\boldsymbol{B}}_y  dy\  \boldsymbol{u}^{(m)}_y \\
    &+L\kappa\ \delta {\boldsymbol{u}}^T_x\int_{\Omega_x}  \Tilde{\boldsymbol{B}}^T_x \Tilde{\boldsymbol{B}}_x dx \ \boldsymbol{u}_x {\boldsymbol{u}}^T_y \int_{\Omega_y}   \Tilde{\boldsymbol{N}}^T_y \Tilde{\boldsymbol{N}}_y  dy\  \boldsymbol{u}_y \\ 
    &+ L\kappa\ \delta {\boldsymbol{u}}^T_x\int_{\Omega_x}  \Tilde{\boldsymbol{N}}^T_x \Tilde{\boldsymbol{N}}_x dx \ \boldsymbol{u}_x {\boldsymbol{u}}^T_y \int_{\Omega_y}   \Tilde{\boldsymbol{B}}^T_y \Tilde{\boldsymbol{B}}_y  dy\  \boldsymbol{u}_y \\
    &=0     {.}
\end{aligned}
\end{equation}
With the definition \eqref{eq:KxxMxx}, we have 
\begin{equation}
\begin{aligned}
   (\frac{1}{\Delta t}+\alpha L)&\sum_{m=1}^{M-1}   \delta {\boldsymbol{u}}^T_x\bold{M}_{xx}  \boldsymbol{u}^{(m)}_x {\boldsymbol{u}}^T_y \bold{M}_{yy}  \boldsymbol{u}^{(m)}_y \\
   &+  (\frac{1}{\Delta t}+\alpha L) \delta {\boldsymbol{u}}^T_x\bold{M}_{xx} \boldsymbol{u}_x {\boldsymbol{u}}^T_y \bold{M}_{yy} \boldsymbol{u}_y  \\
   &-(\frac{1}{\Delta t}+\alpha L)\sum_{m=1}^{M_{k}}  \delta {\boldsymbol{u}}^T_x\bold{M}_{xx}  \boldsymbol{u}^{(m,k)}_x {\boldsymbol{u}}^T_y \bold{M}_{yy} \boldsymbol{u}^{(m,k)}_y \\
    &+ \delta {\boldsymbol{u}}^T_x \bold{M}_{xy} {\boldsymbol{u}}_y\\
    &+L\kappa \sum_{m=1}^{M-1}\delta {\boldsymbol{u}}^T_x\bold{K}_{xx}  \boldsymbol{u}^{(m)}_x {\boldsymbol{u}}^T_y \bold{M}_{yy}  \boldsymbol{u}^{(m)}_y \\
    &+ L\kappa \sum_{m=1}^{M-1}  \delta {\boldsymbol{u}}^T_x\bold{M}_{xx} \boldsymbol{u}^{(m)}_x {\boldsymbol{u}}^T_y \bold{K}_{yy} \boldsymbol{u}^{(m)}_y \\
    &+L\kappa\ \delta {\boldsymbol{u}}^T_x\bold{K}_{xx}  \boldsymbol{u}_x {\boldsymbol{u}}^T_y \bold{M}_{yy} \boldsymbol{u}_y \\ 
    &+ L\kappa\ \delta {\boldsymbol{u}}^T_x\bold{M}_{xx} \boldsymbol{u}_x {\boldsymbol{u}}^T_y \bold{K}_{yy}  \boldsymbol{u}_y \\
    &=0    {.} 
\end{aligned}
\end{equation}
Rearranging the above equation gives 
\begin{equation}
    \bold{K}_x\boldsymbol{u}_x=\bold{Q}_x  {,}
\end{equation}
with $\bold{K}_x$ and $\bold{Q}_x$ defined by \eqref{eq:KxQx}.

By analogy, we can derive the discretized equation for solving $u_y$, by considering  $u_x$ is a constant function and known by the initial guess or previous computations, then $\delta u_x=0$ and $\delta u= u_x \delta u_y$. In this case, the weak form problem \eqref{eq:CTDappend} becomes
\begin{equation}
\begin{aligned}
    \int_\Omega  u_x \delta  u_y\ &(\sum_{m=1}^{M-1} u^{(m)}_x u^{(m)}_y +u_x u_y-\sum_{m=1}^{M_{k}} u^{(m,k)}_x u^{(m,k)}_y )(\frac{1}{\Delta t}+\alpha L)\ d \Omega \\
    &+ \int_\Omega u_x \delta  u_y \  L w({u}^{k}) \ d \Omega +\int_\Omega  \nabla( u_x \delta u_y)\cdot  L\kappa \nabla(\sum_{m=1}^{M-1} u^{(m)}_x u^{(m)}_y +u_x u_y) \ d \Omega =0    {.}
\end{aligned}
\end{equation}
This will lead to the following equation 
\begin{equation}
\begin{aligned}
   (\frac{1}{\Delta t}+\alpha L)&\sum_{m=1}^{M-1}    {\boldsymbol{u}}^T_x\bold{M}_{xx}  \boldsymbol{u}^{(m)}_x \delta {\boldsymbol{u}}^T_y \bold{M}_{yy}  \boldsymbol{u}^{(m)}_y \\
   &+  (\frac{1}{\Delta t}+\alpha L)  {\boldsymbol{u}}^T_x\bold{M}_{xx} \boldsymbol{u}_x \delta{\boldsymbol{u}}^T_y \bold{M}_{yy} \boldsymbol{u}_y  \\
   &-(\frac{1}{\Delta t}+\alpha L)\sum_{m=1}^{M_{k}}   {\boldsymbol{u}}^T_x\bold{M}_{xx}  \boldsymbol{u}^{(m,k)}_x \delta {\boldsymbol{u}}^T_y \bold{M}_{yy} \boldsymbol{u}^{(m,k)}_y \\
    &+ \delta {\boldsymbol{u}}^T_y \bold{M}_{xy}^T {\boldsymbol{u}}_x\\
    &+L\kappa \sum_{m=1}^{M-1}{\boldsymbol{u}}^T_x\bold{K}_{xx}  \boldsymbol{u}^{(m)}_x \delta {\boldsymbol{u}}^T_y \bold{M}_{yy}  \boldsymbol{u}^{(m)}_y \\
    &+ L\kappa \sum_{m=1}^{M-1} {\boldsymbol{u}}^T_x\bold{M}_{xx} \boldsymbol{u}^{(m)}_x  \delta {\boldsymbol{u}}^T_y \bold{K}_{yy} \boldsymbol{u}^{(m)}_y \\
    &+L\kappa\  {\boldsymbol{u}}^T_x\bold{K}_{xx}  \boldsymbol{u}_x \delta{\boldsymbol{u}}^T_y \bold{M}_{yy} \boldsymbol{u}_y \\ 
    &+ L\kappa\  {\boldsymbol{u}}^T_x\bold{M}_{xx} \boldsymbol{u}_x \delta {\boldsymbol{u}}^T_y \bold{K}_{yy}  \boldsymbol{u}_y \\
    &=0     {.}
\end{aligned}
\end{equation}
Rearranging the above equation gives 
\begin{equation}
    \bold{K}_y\boldsymbol{u}_y=\bold{Q}_y  {,}
\end{equation}
with $\bold{K}_y$ and $\bold{Q}_y$ defined by \eqref{eq:KyQy}.

%% References with bibTeX database:
\bibliographystyle{model1-num-names}
\bibliography{CTDAC.bib}

%% Authors are advised to submit their bibtex database files. They are
%% requested to list a bibtex style file in the manuscript if they do
%% not want to use model1-num-names.bst.

%% References without bibTeX database:

% \begin{thebibliography}{00}

%% \bibitem must have the following form:
%%   \bibitem{key}...
%%

% \bibitem{}

% \end{thebibliography}

\end{document}